\begin{document}

\title[BvM for Compound Poisson processes]{Bernstein - von Mises theorems for statistical inverse problems II:  Compound Poisson processes}\thanks{We would like to thank Kolyan Ray for helpful comments leading to improvements of the paper. We are grateful to the associate editor and an anonymous referee for valuable remarks on the manuscript. RN acknowledges support by the European Research Council (ERC) under grant agreement No.647812.}

\author{Richard Nickl}

\address{Statistical Laboratory, Department of Pure Mathematics and Mathematical Statistics, University of Cambridge, CB3 0WB, Cambridge, UK}
\email{r.nickl@statslab.cam.ac.uk}

\author{Jakob S\"ohl}
\address{Delft Institute of Applied Mathematics, Faculty of Electrical Engineering, Mathematics and Computer Science, TU Delft, Van Mourik Broekmanweg 6, 2628 XE, Delft, The Netherlands}
\email{j.soehl@tudelft.nl}

\date{\today}

%\\
%\textit{University of Cambridge\footnote{Statistical Laboratory, Department of
%Pure Mathematics and Mathematical Statistics, University of Cambridge, CB3 0WB,
%Cambridge, UK. Email: r.nickl@statslab.cam.ac.uk,}}}

\begin{abstract}
We study nonparametric Bayesian statistical inference for the parameters governing a pure jump process of the form $$Y_t = \sum_{k=1}^{N(t)} Z_k,~~~ t \ge 0,$$ where $N(t)$ is a standard Poisson process of intensity $\lambda$, and $Z_k$ are drawn i.i.d.~from jump measure $\mu$.  A high-dimensional wavelet series prior for the L\'evy measure $\nu = \lambda \mu$ is devised and the posterior distribution arises from observing discrete samples $Y_\Delta, Y_{2\Delta}, \dots, Y_{n\Delta}$ at fixed observation distance $\Delta$, giving rise to a nonlinear inverse inference problem. We derive contraction rates in uniform norm for the posterior distribution around the true L\'evy density that are optimal up to logarithmic factors  over H\"older classes, as sample size $n$ increases. We prove a functional Bernstein--von Mises theorem for the distribution functions of both $\mu$ and $\nu$, as well as for the intensity $\lambda$, establishing the fact that the posterior distribution is approximated by an infinite-dimensional Gaussian measure whose covariance structure is shown to attain the information lower bound for this inverse problem. As a consequence posterior based inferences, such as nonparametric credible sets, are asymptotically valid and optimal from a frequentist point of view.

\medskip

\noindent\textit{MSC 2000 subject classification}: 62G20, 65N21, 60G51, 60J75

\smallskip

\noindent\textit{Key words: Bayesian nonlinear inverse problems, compound Poisson processes, L\'evy processes, asymptotics of nonparametric Bayes procedures}
\end{abstract}

\maketitle

 \tableofcontents

\section{Introduction}

While the Bayesian approach to inverse problems is widely used in scientific and statistical practice, very little theory is available that explains why Bayesian algorithms should be trusted to provide objective solutions of inverse problems in the presence of statistical noise, particularly in infinite-dimensional, non-linear cases which naturally arise in applications, see \cite{S10, DS16}. In the recent contributions \cite{NS17, N17, MNP17} proof techniques were developed that can be used to derive theoretical guarantees for posterior-based inference, based on suitably chosen priors, in various settings, including inverse problems arising with diffusion processes, $X$-ray tomography or elliptic partial differential equations. A main idea of \cite{N17, MNP17} is that a careful analysis of the `Fisher information operator' inducing the statistical observation scheme combined with tools from Bayesian nonparametrics \cite{CN13, CN14} can be used to derive sharp results about the frequentist behaviour of posterior distributions in general inverse problems.

The analysis of the `information operator' depends highly on the particular problem at hand, and in the present article we continue this line of investigation in a statistical inverse problem very different from the ones considered in \cite{NS17, N17, MNP17}, namely in the problem of recovering parameters of a stochastic jump process from discrete observations. Statistically speaking, the inverse problem is a `missing observations' problem that arises from the fact that we do not observe all the jumps and need to `decompound' the effect of possibly seeing an accumulation of jumps without knowing how many have occurred. This has been studied from a non-Bayesian perspective for certain classes of L\'evy processes by several authors, we mention here the seminal papers \cite{BG03, BR06, EGS07, NR09} -- see also~\cite{Ret15} for various further references -- and \cite{NR12, Trabs2015, NRST16, Co17} relevant for the results obtained in the present paper. A typical estimation method used in several of these articles is based on spectral regularisation techniques built around the fact that the L\'evy measure identifying all parameters of the jump process can be expressed in the Fourier domain by the L\'evy-Khintchine formula (see (\ref{LK}) below). 

Given the sophistication of the non-linear estimators proposed so far in the `decompounding problem' just described, one may wonder if a `principled' Bayesian approach that just places a standard high-dimensional random series prior on the unknown L\'evy measure can at all return valid posterior inferences, for example in the sense of frequentist's coverage of credible sets, in such a measurement scheme. In the present article we provide some answers to this question in the prototypical setting where one observes discrete increments of a compound Poisson processes at fixed observation distance $\Delta>0$. To lift some of the technicalities occurring in the proofs we restrict ourselves to periodic and hence compactly supported processes, and -- to avoid identifiability problems arising in the periodic case -- to small enough $\Delta$. We show that the posterior distribution optimally recovers all parameters of the jump process, both in terms of convergence rates for the L\'evy density $\nu$ and in terms of efficient inference for the intensity of the Poisson process and the distribution function of the jump measure $\mu$. For the latter we obtain functional Bernstein--von Mises theorems which are the Bayesian analogues of the `Donsker-type' central limit theorems obtained in \cite{NR12}, \cite{Co17} for frequentist regularisation estimators.  Just as in \cite{N17}, our proofs are inspired by techniques put forward in \cite{CN13, CN14, C14, CR15, C17} in `direct' problems. However, due to the different structure of the jump process model, our proofs need to depart from those in \cite{N17} in various ways, perhaps most notably since we have to consider a prior with a larger support ellipsoid, and hence need to prove initial contraction rates for our posterior distribution by quite different methods than is commonly done, see Section \ref{prelimc}. The inversion of the information operator in the jump process setting also poses some surprising subtleties that nicely reveal finer properties of the inference problem at hand -- our explicit construction of the inverse information operator in Section \ref{opsec} also gives new, more direct proofs of the semi-parametric lower bounds obtained in \cite{Trabs2015} (whose lower bounds admittedly hold in a more general setting than ours). Finally we should mention that substantial work -- using tools from empirical process theory -- is required in our setting when linearising the likelihood function to obtain quantitative LAN-expansions since, in contrast to \cite{N17}, our observation scheme is far from Gaussian. In this sense the techniques we develop here are relevant also beyond compound Poisson processes, although, as argued above, the theory for non-linear inverse problems is largely constrained by any specific case one is studying.

The paper is structured as follows: In Section~\ref{sec_model} we give basic definitions and describe the model and  prior. In Section~\ref{sec_results} we state the contraction rates in supremum norm, the Cram\'er--Rao lower bound as well as the Bernstein--von Mises theorems in multi-scale spaces and for functionals of the L\'evy measure. Section~\ref{sec_proofs} contains the proof of the contraction rates and of the multi-scale Bernstein--von Mises theorem. Sections~\ref{prelimc}-\ref{sec:tedious} contain the remaining proofs.

\section{Model and prior}\label{sec_model}

\subsection{Basic definitions}

Let $(N(t): t \ge 0)$ be a standard Poisson process of intensity $\lambda>0$. Let $\mu$ be a probability measure on $(-1/2,1/2]$ such that $\mu(\{0\})=0$, and let $Z_1, Z_2, \dots$ be an i.i.d.~sequence of random variables drawn from $\mu$. In what follows we view $I=(-1/2,1/2]$ as a compact group under addition modulo $1$. Then the (periodic) compound Poisson process taking values in $(-1/2,1/2]$ is defined as
\begin{equation}
Y_t = \sum_{k=1}^{N(t)} Z_k,~~~ t \ge 0,
\end{equation}
where $Y_0=0$ almost surely, by convention. The process $(Y_t: t \ge 0)$ is a pure jump L\'evy process on $I=(-1/2,1/2]$ with L\'evy measure $d\nu = \lambda d\mu$. We observe this process at fixed observation distance $\Delta$, namely $Y_\Delta, Y_{2\Delta}, \dots, Y_{n \Delta}$, and define the increments of the process 
\begin{equation}\label{increm}
X_1 = Y_\Delta, X_2 = Y_{2\Delta} - Y_\Delta, \dots, X_n = Y_{n\Delta} - Y_{(n-1)\Delta}.
\end{equation}
The $X_k$'s are i.i.d.~random variables drawn from the infinitely divisible distribution $\mathbb P_\nu = \mathbb P_{\nu, \Delta}$ which has characteristic function (Fourier transform)
\begin{equation}\label{LK}
\phi_{\nu} (k) = \mathcal F \mathbb P_\nu (k) = \exp \left(\Delta \int_I ( e^{2\pi i kx} -1) d\nu \right), ~ k \in \mathbb Z,
\end{equation}
e.g., by the L\'evy--Khintchine formula for L\'evy processes in compact groups (Chapter IV.4 in \cite{P67}). Obviously $(\phi_\nu(k) : k \in \mathbb Z)$ identifies $\mathbb P_\nu$ but under the hypotheses we will employ below it will also identify $\nu$ and thus the law of the jump process $(Y_t:t\ge 0)$. The inverse problem is to recover $\nu$ from i.i.d.~samples drawn from the probability measure $\mathbb P_\nu$.

We denote by $C(I)$ the space of bounded continuous functions on~$I$ equipped with the uniform norm $\|\cdot\|_\infty$, and let $M(I)=C(I)^*$ denote the (dual) space of finite signed (Borel) measures on~$I$. For $\kappa_1, \kappa_2 \in M(I)$ their convolution is defined by
$$\kappa_1 \ast \kappa_2 (g) = \int_I \int_I g(x+y) d\kappa_1(x) d\kappa_2(y),~~ g \in C(I),$$ and the last identity holds in fact for arbitrary $g\in L^\infty(I)$ by approximation, see Proposition~8.48 in~\cite{Fo99}.
This coincides with the usual definition of convolution of functions when the measures involved have densities with respect to the Lebesgue measure. We shall freely use standard properties of convolution integrals, see, e.g., Section 8.2 in \cite{Fo99}. 

An equivalent representation of $\mathbb P_\nu$ is by the infinite convolution series
\begin{equation} \label{convsum}
\mathbb P_\nu = e^{-\Delta \nu(I)} \sum_{k=0}^\infty \frac{\Delta^k \nu^{\ast k}}{k!}
\end{equation}
where $\nu^0=\delta_0, \nu^{\ast 1} =\nu, \nu^{\ast 2} = \nu \ast \nu$ and $\nu^{*k}$ is the $k-1$-fold convolution of $\nu$ with itself. [To see this just check the obvious fact that the Fourier transform of the last representation coincides with $\phi_{\nu}$ in (\ref{LK}), and use injectivity of the Fourier transform.]

We will denote by $\mathbb P_\nu^\mathbb N$ the infinite product measures describing the laws of infinite sequences of i.i.d.~samples (\ref{increm}) arising from a compound Poisson process with L\'evy measure $\nu$, and~$\mathbb E_\nu$ will denote the corresponding expectation operator. We denote by $L^p=L^p(I), 1 \le p<\infty,$ the standard spaces of functions $f$ for which $|f|^p$ is Lebesgue-integrable on~$I$, whereas, in slight abuse of notation, for a finite measure $\kappa$ we will denote by $L^p(\kappa), 1\le p \le \infty,$ the corresponding spaces of $\kappa$-integrable functions on~$I$, predominantly for the choices $\kappa = \nu, \kappa = \mathbb P_\nu$. The spaces $L^2(I), L^2(\kappa)$ are Hilbert spaces equipped with natural inner products  $\langle \cdot, \cdot \rangle, \langle \cdot, \cdot\rangle_{L^2(\kappa)}$, respectively. The symbol $L^\infty(I)$ denotes the usual space of bounded measurable functions on~$I$ normed by $\|\cdot\|_\infty$. We also write $\lesssim, \approx$ for (in-)equalities that hold up to fixed multiplicative constants, and employ the usual $o_P,O_P$-notation to indicate stochastic orders of magnitude of sequences of random variables.

\subsection{Likelihood, prior and posterior}

We study here the problem of conducting nonparametric Bayesian inference on the parameters $\nu, \mu, \lambda$, assuming a regularity constraint $\nu \in C^s(I), s>0$, where $C^s$ is the usual H\"older space over $I$ normed by $\|\cdot\|_{C^s}$ (when $s \in \mathbb N$ these are the ordinary spaces of $s$-times continuously differentiable functions, e.g., Section~2.2.2 in \cite{T83}). To define the likelihood function we need a common dominating measure for the statistical model $(\mathbb P_\nu: \nu \in \mathcal V)$ where $\mathcal V$ is some family of L\'evy measures possessing densities with respect to Lebesgue measure $\Lambda$ with density $\Lambda=1_{(-1/2,1/2]}$. Since $\Lambda$ is idempotent -- $\Lambda \ast \Lambda = \int_I \Lambda(\cdot-y) \Lambda(y)dy = \Lambda$~--  we can consider the resulting compound Poisson measure $\mathbb P_\Lambda = e^{-\Delta} \delta_0 + (1-e^{-\Delta}) \Lambda$ as a fixed reference measure on~$I$. Then for any absolutely continuous $\nu$ on~$I$ the densities $p_\nu$ of $\mathbb P_\nu$ with respect to $\mathbb P_\Lambda$ exist. The likelihood function of the observations $X_1, \dots, X_n$ is defined as 
\begin{equation}\label{likelihood}
L_n(\nu) = \prod_{i=1}^n p_\nu(X_i),~~~ \nu \in \mathcal V.
\end{equation}
We also write $\ell_n(\nu) = \log L_n(\nu)$ for the log-likelihood function. Next, if $\Pi$ is a prior distribution on a $\sigma$-field $\mathcal S_\mathcal V$ of $\mathcal V$ such that the map $(\nu,x) \mapsto p_\nu(x)$ is jointly measurable, then standard arguments %(p.570 in \cite{GineNickl2016})
imply that the resulting posterior distribution given observations $X_1, \dots, X_n$ is
\begin{equation} \label{post}
\Pi(B|X_1, \dots, X_n)  = \frac{\int_B L_n(\nu) d\Pi(\nu)}{\int_\mathcal V L_n(\nu) d\Pi(\nu)}.
\end{equation}
We shall model an $s$-regular function by a high-dimensional product prior expressed through a wavelet basis: Let 
\begin{equation}\label{wavbasis}
\left\{\psi_{lk}: k =0, \dots, (2^l \vee 1)-1,\: l=-1, \dots, J-1 \right\}, J \in \mathbb N,
\end{equation}
form a periodised Daubechies' type wavelet basis of $L^2=L^2(I)$, orthogonal for the usual $L^2$-inner product $\langle \cdot, \cdot \rangle$ (described in Section 4.3.4 in \cite{GineNickl2016}; where the constant `scaling function' is written as the first element $\psi_{-1,0}\equiv 1$, in slight abuse of notation). Basic localisation and approximation properties of this basis are, for any $g \in C^s(I)$ and $j \in \mathbb N$,
\begin{align}
&\sup_{x \in I}\sum_{k}|\psi_{jk}(x)| \lesssim 2^{j/2},~~|\langle g, \psi_{jk}\rangle| \lesssim \|g\|_{C^s} 2^{-j(s+1/2)},\nonumber\\
&\|P_{V_j}(g) -g\|_{L^2(I)} \lesssim \|g\|_{C^s} 2^{-js},\label{wavprop}
\end{align}
where $P_{V_j}$ is the usual $L^2$-projector onto the linear span $V_j$ of the $\psi_{lk}$'s with $l \le j-1$.

Now consider the random function
\begin{equation} \label{prior0}
v = \sum_{l \le J-1} \sum_k a_l u_{lk} \psi_{lk}(\cdot),~a_l=2^{-l} (l^2+1)^{-1},~~ J \in \mathbb N,
\end{equation}
where $u_{lk}$ are i.i.d.~uniform $U(-B,B)$ random variables, and $B$ is a fixed constant. The support of this prior is isomorphic to the hyper-ellipsoid $$V_{B,J} :=\prod_{l=-1}^{J-1}(-Ba_l, Ba_l)^{2^{l}\vee1} \subset \mathbb R^{2^J}$$ of wavelet coefficients. To model an $s$-regular L\'evy measure $\nu$ we define the random function 
\begin{equation} \label{prior}
\nu=e^v, ~~\Pi = \Pi_J = \text{ the law } \mathcal L(\nu) \text{ of } \nu \text{ in } V_{B,J}
\end{equation}
and shall choose $J=J_n$ such that $2^J$ grows as a function of $n$ approximately as
\begin{equation}\label{jprior}
2^J \approx n^{\frac{1}{2s+1}}.
\end{equation}
We note that the weights $a_l=2^{-l}(l^2+1)^{-1}$ ensure that the random function $v$ has some minimal regularity, in particular is contained in a bounded subset of~$C(I)$.

Throughout we shall work under the following assumption on the L\'evy measure and on the prior identifying the law of the compound Poisson process generating the data.

\begin{assumption}\label{overall}
Assume the true L\'evy measure $\nu_0$ has a Lebesgue density, still denoted by~$\nu_0$, which is contained in $C^s(I)$ for some $s>5/2$, that $\nu_0$ is bounded away from zero on~$I$, and that for $v_0 = \log \nu_0$ and some $\gamma>0$,
\begin{equation}\label{intpt}
|\langle v_0, \psi_{lk} \rangle| \le (B-\gamma) a_l~~\forall l,k,
\end{equation}
where $a_l$ was defined in (\ref{prior0}). Assume moreover that $B, \Delta$ are such that $\lambda = \int_I \nu < \pi/\Delta$ for all $\nu$ in the support of the prior.
\end{assumption}

The assumption $s>5/2$ (in place of, say, $s>1/2$) may be an artefact of our proof methods (which localise the likelihood function by an initially suboptimal contraction rate) but, in absence of a general  `Hellinger-distance' testing theory (cf. Appendix~D in~\cite{GvdV17} or Section~7.1 in~\cite{GineNickl2016}) for the inverse problem considered here, appears unavoidable. 

The assumption (\ref{intpt}) with $\gamma>0$ guarantees that the true L\'evy density is an `interior' point of the parameter space $V_{B,J}$ for all $J$ -- a standard requirement if one wishes to obtain Gaussian asymptotics for posterior distributions. 

Finally, the bound on $\lambda$ ensures identifiability of $\nu$, and thus of the law of the compound Poisson process, from the measure $\mathbb P_\nu$ generating the observations. That such an upper bound is necessary is a consequence of the fact that we are considering the periodic setting, see the discussion after Assumption \ref{id} below. For the present parameter space $V_{B,J}$, Assumption~\ref{overall} enforces a fixed upper bound on $\Delta$ -- alternatively for a given value of $\Delta$ we could also renormalise $\nu$ by a large enough constant to make the intensities $\lambda$ small enough, but we avoid this for conciseness of exposition.

\section{Main results}\label{sec_results}

\subsection{Supremum norm contraction rates}

Even though the standard `Hellinger-distance' testing theory to obtain contraction rates is not directly viable in our setting, following ideas in \cite{C14} we can use the Bernstein--von Mises techniques underlying the main theorems of this paper to obtain (near-) optimal contraction rates for the L\'evy density $\nu_0$ in supremum norm loss. The idea is basically to represent the norm by a maximum over suitable collections of linear functionals, and to then treat each functional individually by semi-parametric methods. It can be shown that the minimax rate of estimation for L\'evy densities in $C^s(I)$ with respect to the supremum loss is $(\log n/n)^{s/(2s+1)}$, see \cite{Coca18} for a discussion. The following theorem achieves this rate up to the power of the log-factor. 

\begin{theorem}\label{suprat}
Suppose that $X_1, \dots, X_n$ are generated from (\ref{increm}) and grant Assumption~\ref{overall}. Let $\Pi(\cdot|X_1,\dots, X_n)$ be the posterior distribution arising from prior $\Pi=\Pi_J$ in (\ref{prior}) with~$J$ as in (\ref{jprior}). Then for every $\kappa>3$ we have as $n \to \infty$ that $$\Pi\left(\nu: \|\nu- \nu_0\|_\infty > n^{-s/(2s+1)} \log^\kappa n |X_1, \dots, X_n\right) \to^{\mathbb P_{\nu_0}^\mathbb N} 0.$$
\end{theorem}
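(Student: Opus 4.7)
The plan is to bound $\|\nu - \nu_0\|_\infty$ via a wavelet-level split and then control each individual wavelet coefficient through the semi-parametric Bernstein--von Mises machinery that underlies the paper's main theorems. I would fix a cutoff resolution $J^*$ with $2^{J^*} \approx (n/\log n)^{1/(2s+1)}$ and decompose
\[
\|\nu - \nu_0\|_\infty \le \|P_{V_{J^*}}(\nu - \nu_0)\|_\infty + \|(\nu - \nu_0) - P_{V_{J^*}}(\nu - \nu_0)\|_\infty.
\]
On a high-posterior event on which $\nu$ remains in a bounded subset of $C^s(I)$ -- which should follow from the preliminary contraction rate established in Section \ref{prelimc} together with the ellipsoidal support $V_{B,J}$ of the prior -- the approximation estimate in \eqref{wavprop} bounds the second term by $\lesssim 2^{-J^* s}$, contributing an error of order $n^{-s/(2s+1)}\log^{s/(2s+1)} n$.

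For the projection, the localisation estimate in \eqref{wavprop} yields
\[
\|P_{V_{J^*}}(\nu - \nu_0)\|_\infty \lesssim \sum_{l \le J^*-1} 2^{l/2} \max_{k} |\langle \nu - \nu_0, \psi_{lk}\rangle|,
\]
so the task reduces to a uniform bound on the wavelet coefficients. Here the semi-parametric analysis enters: for each $\psi_{lk}$ one constructs a solution $\tilde\psi_{lk}$ of the information equation via the inverse information operator of Section \ref{opsec}, and then establishes a quantitative LAN expansion of $\ell_n(\nu) - \ell_n(\nu_0)$ along the corresponding least favourable direction. Combined with the preliminary contraction and a change-of-measure in the prior, this shows that under the posterior the functional $\sqrt n \, \langle \nu - \nu_0, \psi_{lk}\rangle$ is approximately Gaussian with variance of order $\|\tilde\psi_{lk}\|^2_{L^2(\mathbb P_{\nu_0})}$. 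Standard Gaussian tail bounds together with a union bound over the at most $2^{J^*}$ pairs $(l,k)$ then give
\[
\max_{l\le J^*-1,\, k}|\langle \nu-\nu_0,\psi_{lk}\rangle| \;\lesssim\; \log^{c}n/\sqrt n
\]
with posterior probability tending to one in $\mathbb P_{\nu_0}^\mathbb N$-probability, where the power $c$ absorbs the union-bound factor $\sqrt{\log n}$, the logarithmic losses coming from the preliminary contraction rate, and those introduced by the empirical-process control of the LAN remainder in this non-Gaussian observation scheme.

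Assembling the two pieces, multiplying by $\sum_{l\le J^*-1}2^{l/2}\approx 2^{J^*/2}$, and inserting the choice of $J^*$ yields the claimed rate $n^{-s/(2s+1)}\log^\kappa n$; the margin $\kappa>3$ is generous enough to absorb all the logarithmic contributions above. The main obstacle, I expect, is making the semi-parametric argument uniform over the family $\{\psi_{lk}: l \le J^*-1, k\}$ up to resolution $J^* \approx (n/\log n)^{1/(2s+1)}$: the $\psi_{lk}$ are not classically smooth test functions, so one must carefully quantify how the mapping properties of the information operator and of its inverse degrade with $l$, verify that the associated $\|\tilde\psi_{lk}\|$-type norms stay uniformly controlled, and check that the LAN remainder and the prior change-of-measure terms remain negligible uniformly in $(l,k)$, all while constraining $\nu$ to the support ellipsoid $V_{B,J}$ of the prior.
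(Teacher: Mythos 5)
Your overall strategy---reducing the sup-norm to a maximum of wavelet coefficients and controlling each coefficient by the semi-parametric Laplace-transform/LAN machinery with a union bound over $(l,k)$---is exactly the route the paper takes, and your list of "main obstacles" (uniformity of the inverse information operator over $\psi_{lk}$, uniform control of $\|A_{\nu_0}(\tilde\psi_J)\|_{L^2(\mathbb P_{\nu_0})}$, uniform negligibility of the LAN remainder and change-of-measure) is the right list. But there is one concrete gap: your treatment of the tail term $\|(\nu-\nu_0)-P_{V_{J^*}}(\nu-\nu_0)\|_\infty$. You bound it by $2^{-J^*s}$ on the grounds that posterior draws lie in a bounded subset of $C^s(I)$, claiming this follows from the preliminary contraction rate and the support ellipsoid $V_{B,J}$. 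Neither gives this. The ellipsoid only enforces $|\langle v,\psi_{lk}\rangle|\le B2^{-l}(l^2+1)^{-1}$, which corresponds to regularity roughly $1/2$, far below $s>5/2$ (the paper stresses that the prior has a \emph{larger} support ellipsoid than the $C^s$ ball, and only claims draws lie in a bounded subset of $C(I)$); and the preliminary rate $\|v-v_0\|_\infty\lesssim n^{-(s-1)/(2s+1)}(\log n)^{1/2+\delta}$ says nothing about high-frequency decay. With your undersmoothed cutoff $2^{J^*}\approx(n/\log n)^{1/(2s+1)}$, the coefficients of a posterior draw at levels $J^*\le l<J$ are bounded only by $Ba_l$, and $\sum_{l\ge J^*}2^{l/2}a_l\approx 2^{-J^*/2}(J^*)^{-2}\approx n^{-1/(2(2s+1))}$ up to logs, which is much larger than $n^{-s/(2s+1)}$ for $s>1/2$. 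So the tail term is not negligible by smoothness of the draws.

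The fix is the one the paper uses: work on the log scale and put the split at the prior's own truncation level $J$ rather than at an undersmoothed $J^*$. Since $v=\log\nu$ lies exactly in $V_J$ under the prior, one has $v=P_{V_J}v$, so $\|\nu-\nu_0\|_\infty\lesssim\|v-v_0\|_\infty\le\|v_J-v_{0,J}\|_\infty+\|v_{0,J}-v_0\|_\infty$, and the only bias term involves the \emph{true} $v_0\in C^s$, giving $O(2^{-Js})=O(n^{-s/(2s+1)})$; all levels $l<J$ are then handled by your coefficient-wise argument. Two smaller points: the relevant variance is $\|A_{\nu_0}(\tilde\psi_{lk})\|^2_{L^2(\mathbb P_{\nu_0})}$ (equivalently $\|(A_{\nu_0}^*)^{-1}(\psi_{lk}1_{\{0\}^c})\|^2_{L^2(\mathbb P_{\nu_0})}$), not $\|\tilde\psi_{lk}\|^2_{L^2(\mathbb P_{\nu_0})}$; and the sub-Gaussian Laplace bound is only available for $|t|\le T$ bounded, so the maximum over $2^{J}$ coefficients costs a factor $O(\log n)$ rather than $O(\sqrt{\log n})$---this is precisely why the paper (and the theorem) ends up with $\kappa>3$ rather than a smaller power, so your "generous margin" remark is correct but the bookkeeping should be done with the $O(J)$ bound in mind.
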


Unlike in the standard i.i.d.~setting in \cite{C14}, we cannot rely on an initial optimal contraction rate in Hellinger distance for $\nu$, which introduces new difficulties when dealing with `semi-parametric bias terms'. Our proofs (via Lemma \ref{approxi} below) overcome these problems at the expense of an additional $\log^\kappa n$-factor.

The only comparable posterior contraction rate result of this kind we are aware of in the literature can be found in \cite{GMS15}, who obtain contraction rates for the Hellinger distance $h(\mathbb P_{\nu}, \mathbb P_{\nu_0})$ between the infinitely divisible distributions $\mathbb P_\nu, \mathbb P_{\nu_0}$ induced by the L\'evy measures $\nu, \nu_0$. Without any sharp `stability estimates' that would allow to derive optimal bounds on the distance $\|\nu-\nu_0\|_\infty$, or even just on $\|\nu-\nu_0\|_{L^2}$, in terms of $h(\mathbb P_{\nu}, \mathbb P_{\nu_0})$, the results in~\cite{GMS15} do a fortiori not imply any guarantees for Bayesian inference on the statistically relevant parameters $\nu, \mu, \lambda$.

The above contraction rate result shows that the Bayesian method works in principle and that estimators that converge with the minimax optimal rate up to log-factors can be derived from the posterior distribution, see~\cite{GhosalGhoshvanderVaart2000}.

\subsection{Information geometry of the jump process model}\label{opsec}

\subsubsection{LAN-expansion of the log-likelihood ratio process}

In order to formulate, and prove, Bernstein--von Mises type theorems, and to derive a notion of semi-parametric optimality of the limit distributions that will occur, we now obtain, for $L_n$ the likelihood function defined in (\ref{likelihood}), the LAN-expansion of the log-likelihood ratio process $$\ell_n(\nu_{h,n})-\ell_n(\nu)= \log \frac{L_n(\nu_{h,n})}{L_n(\nu)},~~ n \in \mathbb N,$$ of the observation scheme considered here, in perturbation directions $\nu_{h,n}$ that are additive on the log-scale. This will induce the score operator for the model and allow us to derive the inverse Fisher information (Cram\'er--Rao lower bound) for a large class of semi-parametric subproblems. Some ideas of what follows are implicit in the work by Trabs (2015), although we need a finer analysis for our results, including inversion of the score operator itself.

\begin{proposition}[LAN expansion] \label{lanprop}
Let $\nu=e^v$ be a L\'evy density that is bounded and bounded away from zero, and for $h \in L^\infty(I)$ consider a perturbation $\nu_{h,n} = e^{v+h/\sqrt n}$. Then if $X_i \sim^{i.i.d.}\mathbb P_\nu$ we have
\begin{equation}\label{lanee}
\ell_n(\nu_{h,n})-\ell_n(\nu) = \frac{1}{\sqrt n} \sum_{i=1}^n A_{\nu}(h)(X_i) - \frac{1}{2}\|A_\nu(h)\|_{L^2(\mathbb P_\nu)}^2 +o_{\mathbb P^{\mathbb N}_{\nu}}(1),
\end{equation}
where the score operator is given by the Radon--Nikodym density
\begin{equation}\label{oneform}
A_\nu(h) \equiv \Delta \frac{ d(h\nu - \int_I hd\nu \cdot \delta_0 ) \ast \mathbb P_\nu}{ d\mathbb P_\nu}.
\end{equation}
The operator $A_\nu$ defines a continuous linear map from $L^2(\nu)$ into \(L^2_0(\mathbb P_\nu) := \left\{g \in L^2(\mathbb P_\nu):\right.\)  $\left.\int_I g d\mathbb P_\nu = 0 \right\}.$
\end{proposition}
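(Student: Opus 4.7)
The plan is to prove this in three steps: (i) derive the score formula \eqref{oneform} via term-by-term differentiation of the convolution series \eqref{convsum}; (ii) verify centeredness and $L^2$-continuity of $A_\nu$; and (iii) upgrade the pointwise score expansion to quadratic mean differentiability (QMD) at $\nu$, from which the LAN expansion \eqref{lanee} is then standard.

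For step (i), set $\nu_t = e^{v+th}$ so that $\partial_t \nu_t|_{t=0} = h\nu$ and $\partial_t \nu_t(I)|_{t=0} = \int h \, d\nu$. Bilinearity of convolution gives $\partial_t \nu_t^{\ast k}|_{t=0} = k(h\nu) \ast \nu^{\ast (k-1)}$ for $k \ge 1$. Differentiating the series \eqref{convsum} term by term and re-indexing $m = k-1$ yields
\[
\partial_t \mathbb{P}_{\nu_t}|_{t=0} = -\Delta \int_I h \, d\nu \cdot \mathbb{P}_\nu + \Delta (h\nu) \ast \mathbb{P}_\nu = \Delta \Big( h\nu - \int_I h \, d\nu \cdot \delta_0 \Big) \ast \mathbb{P}_\nu,
\]
which gives \eqref{oneform} once absolute continuity of the right-hand side with respect to $\mathbb{P}_\nu$ is checked: away from $0$ both measures are Lebesgue-absolutely-continuous, while at the atom $\{0\}$ the only contribution comes from the $-\int h \, d\nu \cdot \delta_0 \ast \mathbb{P}_\nu = -\int h \, d\nu \cdot \mathbb{P}_\nu$ term, matching $\mathbb{P}_\nu$'s own mass at $0$ in the Radon--Nikodym ratio.

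For step (ii), centeredness is immediate since $(h\nu - \int h \, d\nu \cdot \delta_0)(I) = 0$ and convolution preserves total mass, so $\int A_\nu(h) \, d\mathbb{P}_\nu = 0$. To bound $\|A_\nu(h)\|_{L^2(\mathbb{P}_\nu)}$, I would use the ``complete data'' interpretation: since $X_\Delta = \sum_{i=1}^{N(\Delta)} Z_i$ with $Z_i \sim^{i.i.d.} \mu = \nu/\lambda$ independent of $N(\Delta) \sim \mathrm{Poiss}(\Delta\lambda)$, a direct computation testing against bounded $g(X_\Delta)$, using the explicit conditional density of $(N(\Delta), Z_1, \dots, Z_{N(\Delta)})$, yields
\[
A_\nu(h)(X_\Delta) = \mathbb{E}_\nu \Big[ \sum_{i=1}^{N(\Delta)} h(Z_i) - \Delta \int_I h \, d\nu \,\Big|\, X_\Delta \Big].
\]
Jensen's inequality for conditional expectations together with the variance formula for compound Poisson integrals then yields $\|A_\nu(h)\|_{L^2(\mathbb{P}_\nu)}^2 \le \mathrm{Var}_\nu\big( \sum_i h(Z_i) \big) = \Delta \int h^2 \, d\nu = \Delta \|h\|_{L^2(\nu)}^2$, proving continuity into $L^2_0(\mathbb{P}_\nu)$.

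For step (iii), the pointwise expansion $p_{\nu_t}(x) = p_\nu(x)\big(1 + t A_\nu(h)(x) + O(t^2)\big)$ (obtained by second-order Taylor expansion of the series \eqref{convsum}) must be promoted to the QMD statement
\[
\int \Big( \sqrt{p_{\nu_t}} - \sqrt{p_\nu} - \tfrac{t}{2} A_\nu(h) \sqrt{p_\nu} \Big)^2 \, d\mathbb{P}_\Lambda = o(t^2), \qquad t \to 0,
\]
after which the LAN expansion \eqref{lanee} follows from Le Cam's classical QMD-to-LAN theorem for i.i.d.\ observations. The main obstacle is the QMD bound itself: it requires uniform control of the second-order remainder in the Taylor expansion of $p_{\nu_t}/p_\nu$, handled by using that $\nu$ and hence $p_\nu$ are bounded and bounded away from zero on $I$ (so one may divide by $p_\nu$ safely) together with $h \in L^\infty(I)$, allowing a dominated-convergence argument to pass from the pointwise expansion to the $L^2(\mathbb{P}_\Lambda)$-statement.
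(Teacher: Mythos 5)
Your proposal is correct, but it reaches the conclusion by a genuinely different route than the paper. The paper proves Proposition \ref{lanprop} by recycling the machinery of Section \ref{sec:likexp}: it Taylor-expands $\ell_n$ to third order along the path $s\mapsto e^{v+sh/\sqrt n}$ using the explicitly computed directional derivatives of $d\mathbb P_{\nu^{(s)}}/d\mathbb P_\Lambda$ from Section \ref{pertu}, centres the quadratic term by a variance computation, and controls the cubic remainder via the convolution estimates of Lemma \ref{sup} plus a change of measure; the $L^2(\nu)\to L^2_0(\mathbb P_\nu)$ continuity is also obtained from Lemma \ref{sup} (Young's inequality for convolutions and boundedness of $d\Lambda/d\mathbb P_\nu$). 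You instead (a) identify $A_\nu(h)(x)$ as the conditional expectation $\mathbb E_\nu[\sum_{i\le N(\Delta)}h(Z_i)-\Delta\int h\,d\nu\mid X_\Delta=x]$ of the complete-data score — which is easily verified by testing against bounded $g$ exactly as you indicate — and deduce $\|A_\nu(h)\|_{L^2(\mathbb P_\nu)}^2\le\Delta\|h\|_{L^2(\nu)}^2$ from Jensen and the compound Poisson variance formula, a cleaner argument that even yields the sharp constant; and (b) establish differentiability in quadratic mean of the one-dimensional submodel $t\mapsto\mathbb P_{e^{v+th}}$ and invoke Le Cam's QMD-to-LAN lemma. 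The QMD step is only sketched, but the ingredients you name suffice: the expansion of the exponential convolution series gives a second-order remainder that is uniform in $x$ because the perturbing signed measure $\nu(e^{th}-1)$ has total variation $O(t)$ for $h\in L^\infty$, and both $p_{\nu_t}$ and $p_\nu$ are uniformly bounded above and below (the a.c.\ part of $\mathbb P_\nu$ has density $\ge e^{-\Delta\lambda}\Delta\inf\nu$ and the atom at $0$ has mass $e^{-\Delta\lambda}$), so $\sqrt{p_{\nu_t}}-\sqrt{p_\nu}=(p_{\nu_t}-p_\nu)/(\sqrt{p_{\nu_t}}+\sqrt{p_\nu})$ can be compared to $\tfrac t2 A_\nu(h)\sqrt{p_\nu}$ with a uniformly $O(t^2)$ error. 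For a \emph{fixed} direction $h$ and fixed $\nu$, which is all Proposition \ref{lanprop} asserts, your classical route is lighter; what the paper's heavier direct expansion buys is quantitative control of the remainder that is \emph{uniform} over perturbation directions $\eta\in\mathcal H_n$ and over $\nu$ in the shrinking sets $D_{n,M}$, which is what is actually needed later for Proposition \ref{prop:functbvm} and the Bernstein--von Mises theorems, and which Le Cam's lemma does not deliver.
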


The proposition is proved in Section~\ref{lansec}. 

\smallskip

In the remainder of this section we study properties of $A_\nu$ and of its adjoint $A_\nu^*$, in particular we construct certain inverse mappings. Due to the presence of the Dirac measure in (\ref{oneform}) some care has to be exercised when identifying the natural domain of the inverse of the `information' operator $A_\nu^*A_\nu$.  In particular we can invert $A_\nu^* A_\nu$ only along directions~$\psi$  for which $\psi(0)=0$. An intuitive explanation is that the axiomatic property $\nu(\{0\})=0$ is required for $\nu$ to identify the law of the compound Poisson process (otherwise `no jumps' and `jumps of size zero' are indistinguishable), and as a consequence when making inference on the functional $\int_I \psi d\nu$ one should a priori restrict to $\int_I \psi 1_{\{0\}^c} d\nu$, a fact that features in the Cram\'er--Rao information lower bound (\ref{crlb}) to be established below.

\subsubsection{Derivation of the (right-)inverse of the score operator}

To proceed we will set $\Delta=1$ without loss of generality for the moment. If $\kappa \in M(I)$ is a finite signed measure on~$I$ and $g: I \to \mathbb R$ a function such that $\int_I |g|d|\kappa|<\infty$, we use the notation $g\kappa$ for the element of $M(I)$ given by $(g \kappa)(A)=\int_A g d\kappa$, $A$ a Borel subset of~$I$. Then, for a fixed L\'evy density $\nu \in L^\infty(I)$, consider the operator
\begin{equation} \label{anu}
h \mapsto A_\nu(h) := \frac{d [(\nu h) \ast \mathbb P_\nu]}{d \mathbb P_\nu} (x) - \left[\int_I d (\nu h)  \right], ~~x \in I,
\end{equation}
defined on the subset of $M(I)$ given by $$\mathcal D \equiv \{\kappa= \kappa_a+c \delta_0, ~\kappa_a \in M(I) \text{ has Lebesgue-density }h_a \in L^2(\nu); c \in \mathbb R\}.$$ This operator serves as an extension of $A_\nu$ from (\ref{oneform}) to the larger domain $\mathcal D$. It still takes values in $L^2_0(\mathbb P_\nu)$; in fact $\delta_0$ is in the kernel of $A_\nu$ since 
\begin{equation}\label{akernel}
A_\nu(\delta_0) = \frac{\nu(0) d\mathbb P_\nu}{d\mathbb P_\nu} - \int_I \nu(x)d\delta_0(x) = \nu(0)-\nu(0)=0,
\end{equation} 
but extending $A_\nu$ formally to $\mathcal D$ is convenient since the inverse of $A_\nu$ to be constructed next will take values in $\mathcal D$. Define 
\begin{equation} \label{deconv}
\pi_\nu = e^{\nu(I)} \sum_{m=0}^\infty \frac{(-1)^m \nu^{\ast m}}{m!},
\end{equation}
a finite signed measure for which $\mathbb P_\nu \ast \pi_\nu = \delta_0$ (by checking Fourier transforms). Formally, up to a constant, $\pi_\nu$ equals the inverse Fourier transform $\mathcal F^{-1} (1/\phi_{\nu})$ of $1/\phi_\nu$, and convolution with $\pi_\nu$ can be thought of as a `deconvolution operation'.
\begin{lemma}\label{scoinv}
Assume the L\'evy density $\nu \in L^\infty(I)$ is bounded away from zero on~$I$. The operator $A_\nu: \mathcal D \to L_0^2(\mathbb P_\nu)$ from (\ref{anu}) has inverse
\begin{equation}
\tilde A_\nu: L^2_0(\mathbb P_\nu) \to  \mathcal D, ~~\tilde A_\nu(g) := \frac{1}{\nu(\cdot)} \pi_\nu \ast (g \mathbb P_\nu) (\cdot),
\end{equation} in the sense that $A_\nu \tilde A_\nu = Id$ on $L^2_0(\mathbb P_\nu)$.
\end{lemma}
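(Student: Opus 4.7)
The plan is to verify $A_\nu(\tilde A_\nu(g)) = g$ for every $g \in L^2_0(\mathbb P_\nu)$ by a direct algebraic manipulation of the signed measures involved, exploiting the defining property $\mathbb P_\nu \ast \pi_\nu = \delta_0$ (noted in the excerpt and confirmed by checking Fourier transforms) that makes $\pi_\nu$ a convolutional inverse of $\mathbb P_\nu$.

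Setting $h := \tilde A_\nu(g)$, the central step is the formal identity $\nu h = \pi_\nu \ast (g\mathbb P_\nu)$. Substituting into the definition (\ref{anu}) of $A_\nu$ and using associativity and commutativity of convolution on the compact group $I$,
\begin{equation*}
(\nu h) \ast \mathbb P_\nu \;=\; \pi_\nu \ast \mathbb P_\nu \ast (g \mathbb P_\nu) \;=\; \delta_0 \ast (g \mathbb P_\nu) \;=\; g\,\mathbb P_\nu,
\end{equation*}
so the Radon--Nikodym derivative appearing in (\ref{anu}) collapses to $g$. It then remains to check that the additive constant $\int_I d(\nu h)$ vanishes: on the compact group $I$ one has $\int_I d(\pi_\nu \ast g\mathbb P_\nu) = \pi_\nu(I)\cdot(g\mathbb P_\nu)(I) = 1 \cdot 0 = 0$, where $\pi_\nu(I) = e^{\nu(I)}\sum_m (-\nu(I))^m/m! = 1$ and the second factor vanishes by the zero-mean assumption $g \in L^2_0(\mathbb P_\nu)$.

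The main technical subtlety, and the reason the extended domain $\mathcal D$ was introduced in the first place, is that both $\mathbb P_\nu$ and $\pi_\nu$ carry an atom at $0$, so the equation $\nu h = \pi_\nu \ast (g\mathbb P_\nu)$ requires careful interpretation. I would therefore decompose $\pi_\nu \ast (g\mathbb P_\nu) = \rho_a\,dx + g(0)\,\delta_0$, noting that the atomic contribution arises only from convolving the $\delta_0$-components of $\pi_\nu$ and $g\mathbb P_\nu$, and that the absolutely continuous density $\rho_a$ lies in $L^2(I)$ because the a.c.\ parts of both $\mathbb P_\nu$ and $\pi_\nu$ are bounded on the torus (uniformly convergent series in $\nu$, since $\nu \in L^\infty$). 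Defining $h$ componentwise as $\frac{\rho_a}{\nu}\,dx + \frac{g(0)}{\nu(0)}\,\delta_0$ and using that $\nu$ is bounded and bounded away from zero, one obtains a legitimate element of $\mathcal D$ whose a.c.\ density lies in $L^2(\nu)$, and the multiplication $\nu \cdot h$ inside $A_\nu$ reconstitutes $\pi_\nu \ast (g\mathbb P_\nu)$ exactly. Once this bookkeeping is in place the remaining computation is essentially a one-line application of associativity of convolution on $I$; verifying this interpretation is the only step I expect to demand genuine care.
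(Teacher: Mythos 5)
Your proposal is correct and follows essentially the same route as the paper's proof: both reduce the claim to associativity of convolution together with $\mathbb P_\nu \ast \pi_\nu = \delta_0$ (giving Radon--Nikodym derivative $g$) and the vanishing of $\int_I d[\pi_\nu \ast (g\mathbb P_\nu)] = \int_I g\, d\mathbb P_\nu \cdot \pi_\nu(I) = 0$. The only difference is that you spell out the atomic/absolutely continuous bookkeeping showing $\tilde A_\nu(g) \in \mathcal D$, which the paper declares ``immediate from the definition of $\pi_\nu$ and (\ref{convsum})''; your elaboration is accurate.
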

\begin{proof}
For any $g \in L^2_0(\mathbb P_\nu)$, by the Cauchy--Schwarz inequality, $g \mathbb P_\nu$ defines a finite signed measure, so that $\tilde A_\nu$ is well-defined and takes values in $M(I)$. Since $\mathbb P_\nu \ast \pi_\nu = \delta_0$ the Radon--Nikodym theorem (Theorem 5.5.4 in \cite{D02}) implies $$ \frac{d\left[\mathbb P_\nu \ast \pi_\nu \ast (g \mathbb P_\nu) \right]}{d \mathbb P_\nu} =  \frac{d(g \mathbb P_\nu)}{d \mathbb P_\nu} = g,~\quad \quad \quad~~~~~\mathbb P_\nu \text{ a.s.}.$$ We then have
\begin{equation}
A_\nu(\tilde A_\nu(g)) = \frac{d\left[\mathbb P_\nu \ast \pi_\nu \ast (g \mathbb P_\nu) \right]}{d \mathbb P_\nu} - \int_I d[\pi_\nu \ast (g \mathbb P_\nu)] = g,
\end{equation}
where the second term vanishes since for such $g$, by the definition of convolution,
$$\int_I d[\pi_\nu \ast (g \mathbb P_\nu)] = \int_I g d\mathbb P_\nu \int_I d \pi_\nu =0.$$ That $\tilde A_\nu$ takes values in $\mathcal D$ is immediate from the definition of $\pi_\nu$ and (\ref{convsum}). 
\end{proof}

\subsubsection{The adjoint score operator}

We now calculate the adjoint operator of $A_\nu$. 
\begin{lemma}\label{adji}
Assume the L\'evy density $\nu \in L^\infty(I)$ is bounded away from zero on~$I$. If we regard $A_\nu$ from (\ref{oneform}) as an operator mapping the Hilbert spaces $L^2(\nu)$ into $L_0^2(\mathbb P_\nu)$ then its adjoint $A_\nu^*: L^2_0(\mathbb P_\nu) \to L^2(\nu)$ is given by $A^*_\nu (w) = \Delta \mathbb P_\nu (-\cdot) \ast w$.
\end{lemma}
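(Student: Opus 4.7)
The plan is to verify the adjoint identity $\langle A_\nu h, w\rangle_{L^2(\mathbb{P}_\nu)} = \langle h, A_\nu^* w\rangle_{L^2(\nu)}$ for $h \in L^2(\nu)$ and $w \in L^2_0(\mathbb{P}_\nu)$ by a direct computation, starting from the Radon--Nikodym form (\ref{oneform}) of the score and exploiting the symmetry of convolution on the compact group $I$. First I would unpack the left-hand side as
$$\langle A_\nu h, w\rangle_{L^2(\mathbb{P}_\nu)} = \Delta \int_I w(x)\, d[(h\nu) \ast \mathbb{P}_\nu](x) \;-\; \Delta \Big(\int_I h\,d\nu\Big) \int_I w\, d\mathbb{P}_\nu,$$
and observe that the second summand vanishes because $w \in L^2_0(\mathbb{P}_\nu)$ has $\mathbb{P}_\nu$-mean zero. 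This is precisely where the $\delta_0$-correction in the definition of $A_\nu$ becomes invisible to the adjoint.

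Next I would expand the surviving convolution integral using the definition of convolution against the bounded test function $w$ (legitimate by Proposition~8.48 in~\cite{Fo99}) and apply Fubini to rewrite it as
$$\Delta \int_I \int_I w(x+y)\, h(x)\, \nu(x)\,dx\, d\mathbb{P}_\nu(y) = \Delta \int_I h(x)\, \nu(x)\, \Big(\int_I w(x+y)\, d\mathbb{P}_\nu(y)\Big) dx.$$
The inner integral is exactly $(\mathbb{P}_\nu(-\cdot) \ast w)(x)$ after the change of variables $y \mapsto -y$ (reflection through the origin of the compact group $I$), producing
$$\langle A_\nu h, w\rangle_{L^2(\mathbb{P}_\nu)} = \big\langle h,\, \Delta\, \mathbb{P}_\nu(-\cdot) \ast w\big\rangle_{L^2(\nu)},$$
which identifies $A_\nu^* w = \Delta\, \mathbb{P}_\nu(-\cdot) \ast w$ by uniqueness of the Hilbert-space adjoint.

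It remains to verify that $A_\nu^*$ genuinely maps $L^2_0(\mathbb{P}_\nu)$ into $L^2(\nu)$. Using $\nu \in L^\infty(I)$, Cauchy--Schwarz against the probability measure $\mathbb{P}_\nu$, Tonelli, and translation invariance of Lebesgue measure on $I$,
$$\|A_\nu^* w\|_{L^2(\nu)}^2 \le \Delta^2 \|\nu\|_\infty \int_I \int_I w^2(x+y)\, d\mathbb{P}_\nu(y)\, dx = \Delta^2 \|\nu\|_\infty \|w\|_{L^2(I)}^2,$$
and the transition from $\|w\|_{L^2(\mathbb{P}_\nu)}$ to $\|w\|_{L^2(I)}$ follows because the absolutely continuous part of $\mathbb{P}_\nu$ has a density that is bounded below on $I$---a consequence of $\nu$ being bounded away from zero, applied to the series representation (\ref{convsum}). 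The main subtlety throughout is the atom of $\mathbb{P}_\nu$ at $0$: this is the reason the $\delta_0$-correction appears in $A_\nu$, why the adjoint is only defined on the mean-zero subspace $L^2_0(\mathbb{P}_\nu)$, and why some care is needed in the Fubini step to ensure absolute integrability.
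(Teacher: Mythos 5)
Your proof is correct and follows essentially the same route as the paper's: expand $\langle A_\nu h, w\rangle_{L^2(\mathbb P_\nu)}$, observe that the $\delta_0$-correction contributes nothing because $\int w\,d\mathbb P_\nu=0$, and apply Fubini to identify $A_\nu^*w = \Delta\,\mathbb P_\nu(-\cdot)\ast w$. The only cosmetic difference is that the paper first establishes the identity for $w\in C(I)$ and extends by density to sidestep the Fubini integrability issue you flag, whereas you argue directly for general $w\in L^2_0(\mathbb P_\nu)$ and add an explicit (and correct) verification that $A_\nu^*$ maps into $L^2(\nu)$; both are fine.
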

\begin{proof}We set without loss of generality~$\Delta=1$. Let $h \in L^2(\nu)$ and $w \in C(I) \subset L^2(\mathbb P_\nu)$ such that $\int w d\mathbb P_\nu =0$. Then by Fubini's theorem
\begin{align*}
 &\langle A_\nu(h), w \rangle_{L^2(\mathbb P_{\nu})}  = \int_I A_{\nu}(h) w d\mathbb P_{\nu}   = \int_I w d(\mathbb P_{\nu} \ast (h\nu)) - \int h \nu \int w d\mathbb P_{\nu} \\
 &\quad = \int_I \int_I w(x+y) h(x) \nu(x)dx d\mathbb P_{\nu}(y) = \int_I h (\mathbb P_\nu(-\cdot) \ast w) d\nu = \langle h, A_\nu^*(w) \rangle_{L^2(\nu)}
\end{align*}
so that the formula for the adjoint holds on the dense subspace $C(I)$ of $L^2_0(\mathbb P_\nu)$. The Cauchy--Schwarz inequality implies that $\mathbb P_\nu (-\cdot) \ast w \in L^2(\nu)$ so that the case of general $w \in L_0^2(\mathbb P_\nu)$ follows from standard approximation arguments.
\end{proof}

Inspecting the formula for $A_\nu^*$ we can formally define the `inverse' map 
$$(A^*_\nu)^{-1}(g) = \pi_\nu(-\cdot) \ast g \text{ with } (\pi_\nu(-\cdot)\ast g)(x)=\int_I g(x+y)d\pi_\nu(y),~g\in L^2(\PP_\Lambda),$$
for $\nu\in L^\infty(I)$ and scaled by $1/\Delta$ if $\Delta \neq 1$. If $g \in L^\infty(I)$ satisfies $g(0)=0$ then using $\mathbb P_\nu\ast\pi_\nu=\delta_0$ (cf. after~(\ref{deconv})) we have that $(A^*_\nu)^{-1} (g) \in L^2_0(\mathbb P_\nu)$ since
\begin{equation} \label{zent}
\int_I  (A^*_\nu)^{-1} (g) d\mathbb P_\nu = \int_I \pi_\nu(-\cdot) \ast g \;d\mathbb P_\nu 
= \int_I  g \;d(\mathbb P_\nu\ast\pi_\nu)
=g(0)=0.
\end{equation}

\subsubsection{Inverse information operator and least favourable directions}

Now let $\psi \in L^\infty(I)$ be arbitrary but such that $\psi(0)=0$, for instance we can take $\psi 1_{\{0\}^c}$ for any $\psi \in C(I)$. If $\nu \in L^\infty(I)$ is bounded away from zero then $\psi/\nu\in L^2(\PP_\Lambda)$ and by what precedes $(A^*_\nu)^{-1} (\psi/\nu) \in L^2_0(\mathbb P_\nu)$ and hence in view of Lemma \ref{scoinv} we can define, for any such $\psi$, the new function
\begin{equation} \label{tpsi}
\tilde \psi_d = -\tilde A_\nu \left[\left(A^*_\nu\right)^{-1} \left(\frac{\psi}{\nu}\right)\right]
\end{equation}
as an element of $\mathcal D$. Concretely, in view of (\ref{convsum}), (\ref{deconv}), (when $\Delta=1$, otherwise divide the right hand side in the following expression by $\Delta^2$)
\begin{equation}\label{psid}
\tilde \psi_d = -\tilde A_{\nu}\bigg[\pi_\nu(-\cdot) \ast \frac{\psi}{\nu}\bigg]= -\frac{1}{\nu}\, \pi_\nu \ast \bigg(\Big(\pi_\nu(-\cdot) \ast \frac{\psi}{\nu}\Big) \mathbb P_\nu\bigg) (\cdot).
\end{equation} 
We can then write
$\tilde \psi_d = \tilde \psi + c \delta_0$ where 
\begin{equation}\label{psioh}
\tilde \psi = \tilde \psi _d -c \delta_0
\end{equation}
is the part of $\tilde \psi_d$ that is absolutely continuous with respect to Lebesgue measure $\Lambda$, and $c \delta_0$ is the discrete part (for some constant $c$).

The content of the next lemma is that $\tilde \psi$ allows to represent  the LAN inner product
\begin{equation}\label{lanprod}
\langle f, g \rangle_{LAN}\equiv \langle A_\nu (f), A_\nu (g) \rangle_{L^2(\mathbb P_\nu)},~~f,g \in L^2(\nu),
\end{equation} 
in the standard $L^2$-inner product $\langle \cdot, \cdot \rangle$ of $L^2(I)$.
\begin{lemma} \label{zero} Assume the L\'evy density $\nu \in L^\infty(I)$ is bounded away from zero on~$I$. If $\psi \in L^\infty(I)$ satisfies $\psi(0)=0$ then for all $h \in L^2(\nu)$ and $\tilde \psi_d, \tilde \psi$ given as in (\ref{psid}), (\ref{psioh}), $$\int_ I A_{\nu} (h) A_{\nu}(\tilde \psi) d\mathbb P_{\nu} = \int_I A_{\nu} (h) A_{\nu}(\tilde \psi_d) d\mathbb P_{\nu} = -\langle h,\psi \rangle.$$ \end{lemma}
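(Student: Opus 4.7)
The plan is to string together three facts established in the preceding results: the inverse identity $A_\nu \tilde A_\nu = \Id$ of Lemma \ref{scoinv}, the adjoint formula of Lemma \ref{adji}, and the map $(A^*_\nu)^{-1}$ constructed in (\ref{zent})--(\ref{tpsi}). The first equality is essentially by inspection: by (\ref{psioh}), $\tilde \psi_d = \tilde \psi + c\delta_0$, so linearity of the extended operator $A_\nu$ on $\mathcal D$ combined with $A_\nu(\delta_0)=0$ from (\ref{akernel}) yields $A_\nu(\tilde \psi_d) = A_\nu(\tilde \psi)$, after which the two integrals against $A_\nu(h)\,d\mathbb P_\nu$ agree.

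For the second equality, introduce $g := (A^*_\nu)^{-1}(\psi/\nu) = \pi_\nu(-\cdot) \ast (\psi/\nu)$. Since $\nu$ is bounded away from zero and $\psi \in L^\infty(I)$, the ratio $\psi/\nu$ is bounded, hence $g$ is bounded as the convolution of the finite signed measure $\pi_\nu(-\cdot)$ with a bounded function; the hypothesis $\psi(0)=0$ together with (\ref{zent}) then gives $g \in L^2_0(\mathbb P_\nu)$. By the definition (\ref{tpsi}), $\tilde \psi_d = -\tilde A_\nu(g)$, so Lemma \ref{scoinv} immediately produces
$$A_\nu(\tilde \psi_d) \,=\, -\,A_\nu \tilde A_\nu(g) \,=\, -g.$$

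It then remains to invoke Lemma \ref{adji} and compute
$$\int_I A_\nu(h) A_\nu(\tilde \psi_d)\, d\mathbb P_\nu \,=\, -\langle A_\nu(h), g\rangle_{L^2(\mathbb P_\nu)} \,=\, -\langle h, A^*_\nu g\rangle_{L^2(\nu)},$$
and to note that $A^*_\nu \circ (A^*_\nu)^{-1} = \Id$, which is a direct consequence of $\mathbb P_\nu \ast \pi_\nu = \delta_0$ (recorded after (\ref{deconv})) applied to the explicit formulas $A^*_\nu(w) = \mathbb P_\nu(-\cdot) \ast w$ and $(A^*_\nu)^{-1}(g) = \pi_\nu(-\cdot) \ast g$. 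Therefore $A^*_\nu g = \psi/\nu$, and absorbing the density $\nu$ into the $L^2(\nu)$ inner product gives $-\langle h, \psi/\nu\rangle_{L^2(\nu)} = -\int_I h\,\psi\,dx = -\langle h,\psi\rangle$, as claimed. The only place where care is needed is in keeping track of the function-space memberships used when invoking the three lemmas; each of these has, however, been arranged precisely by the construction leading to (\ref{psid}), so no additional estimates are required.
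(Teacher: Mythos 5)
Your proposal is correct and follows essentially the same route as the paper: the first identity via $A_\nu(\tilde\psi_d-\tilde\psi)=A_\nu(c\delta_0)=0$ from (\ref{akernel}) and (\ref{psioh}), and the second via Lemma \ref{scoinv} to get $A_\nu(\tilde\psi_d)=-(A_\nu^*)^{-1}(\psi/\nu)$, followed by the adjoint formula of Lemma \ref{adji} and the identity $\mathbb P_\nu(-\cdot)\ast\pi_\nu(-\cdot)=\delta_0$. Your write-up merely makes explicit the intermediate steps (membership of $g$ in $L^2_0(\mathbb P_\nu)$ via (\ref{zent}), and $A_\nu^*\circ(A_\nu^*)^{-1}=\Id$) that the paper compresses into a single display.
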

\begin{proof}
From (\ref{akernel}) and (\ref{psioh}) we have $A_{\nu}(\tilde \psi_d - \tilde \psi)=0$, so the first identity is immediate. By Lemma \ref{scoinv} and the definition of $\tilde \psi_d$ we see $A_\nu(\tilde \psi_d) = - \pi_\nu(-\cdot) \ast (\psi/\nu)$ in $L^2_0(\mathbb P_\nu)$ and from Lemma \ref{adji} we hence deduce
$$\int_I A_\nu(h) A_{\nu} (\tilde \psi_d) d\mathbb P_{\nu} = -\int_I h [\mathbb P_{\nu}(-\cdot) \ast \pi_\nu(-\cdot) \ast (\psi/\nu)] \nu  =  - \int_I h \psi,$$ using also that $\mathbb P_{\nu}(-\cdot) \ast \pi_\nu(-\cdot) =\delta_0$ (cf. after (\ref{deconv})).
\end{proof}

\subsubsection{Cram\'er-Rao information lower bound}

Using the LAN expansion and the previous lemma  we derive the Cram\'er--Rao lower bound for $1/\sqrt n$-consistently estimable functional parameters of the L\'evy measure of a compound Poisson process, following the theory laid out in Chapter 25 in \cite{vdV98}. We recall some standard facts from efficient estimation in Banach spaces: assume for all $h$ in some linear subspace $H$ of a Hilbert space with Hilbert norm $\|\cdot\|_{LAN}$ that the LAN expansion
$$\log \frac{d \mathbb P^n_{v +h/\sqrt n}}{d \mathbb P^n_v} = \Delta_n(h) - \frac{1}{2} \|h\|_{LAN}^2,~~ v \in  H,$$
holds, where $\mathbb P^n_v$ are laws on some measurable space $\mathcal X_n$ and where $\Delta_n(h) \to^d \Delta(h)$ as $n \to \infty$ with $\Delta(h) \sim N(0, \|h\|_{LAN}^2), h \in H$. Consider a map $$K: (H, \|\cdot\|_{LAN}) \to \mathbb R$$ that is suitably differentiable with continuous linear derivative map $\kappa : H \to \mathbb R$. By Theorem 3.11.5 in \cite{vdVW96} the Cram\'er--Rao information lower bound for estimating the parameter $K (\nu)$ is given by $\|\kappa^*\|_{LAN}^2$ where $\kappa^*$ is the Riesz-representer of the map $\kappa: (H, \|\cdot\|_{LAN}) \to \mathbb R$.

We now apply this in the setting of the LAN expansion obtained from Proposition~\ref{lanprop}, with laws $\mathbb P^n_v$ parametrised by $v= \log \nu$, tangent space $H = L^\infty$ and LAN-norm $\|h\|_{LAN} = \|A_{\nu_0}h\|_{L^2(\mathbb P_{\nu_0})}$, where $A_{\nu_0}: (H, \|\cdot\|_{L^2(\nu_0)}) \to L^2_0(\mathbb P_{\nu_0})$ is the score operator studied above corresponding to the true absolutely continuous L\'evy density $\nu_0$ generating the data (note that the central limit theorem ensures $\Delta_n(h) \to^d \Delta (h)$ for these choices). For $\psi \in L^\infty(I)$ we consider the map $$K: v \mapsto \int_I \psi \nu = \int_I \psi e^v,$$ which can be linearised at $\nu_0$ with derivative $$\kappa: h \mapsto \int_I \psi h \nu_0 = \langle \psi_{(0)}, h \rangle_{L^2(\nu_0)} = \int_I \psi 1_{\{0\}^c} \nu_0 h,$$ where by definition $\psi_{(0)} = \psi 1_{\{0\}^c}$. Using Lemma \ref{zero} we have
\begin{align*}
\kappa(h)= \langle \psi_{(0)} \nu_0, h \rangle =  -\langle \widetilde {(\psi_{(0)} \nu_0)}_d, h \rangle_{LAN} \equiv \langle \kappa^*, h \rangle_{LAN}.
\end{align*}
We conclude that the Cram\'er--Rao information lower bound for estimating $\int_I \psi \nu_0$ from discretely observed increments of the compound Poisson process equals 
\begin{align} \label{crlb}
\|\kappa^*\|_{LAN}^2  &= \|A_{\nu_0}(\widetilde {(\psi_{(0)} \nu_0)}_d)\|_{L^2(\mathbb P_{\nu_0})}^2 = \|(A_{\nu_0}^*)^{-1} [\psi_{(0)}]\|_{L^2(\mathbb P_{\nu_0})}^2\nonumber\\
& = \|\pi_\nu(-\cdot) \ast (\psi 1_{\{0\}^c})\|_{L^2(\mathbb P_{\nu_0})}^2,
\end{align}
where we used Lemma \ref{scoinv}  in the second equality. Note that the last identity holds under the notational assumption $\Delta=1$ employed in the preceding arguments and the far right hand side needs to be scaled by $1/\Delta^2$ when $\Delta \neq 1$.

\subsection{A multi-scale Bernstein--von Mises theorem}

We now formulate a Bernstein--von Mises theorem that entails a Gaussian approximation of the posterior distribution arising from prior (\ref{prior}) in an infinite-dimensional multi-scale space. We will show in the next subsection how one can deduce from it various Bernstein--von Mises theorems for statistically relevant aspects of $\nu, \mu, \lambda$. Following \cite{CN14} (see also p.596f.~in \cite{GineNickl2016}) the idea is to study the asymptotics of the measure induced in sequence space by the action $(\langle \nu, \psi_{lk}\rangle)$ of  draws $\nu \sim \Pi(\cdot|X_1, \dots, X_n)$ of the conditional posterior distribution on the wavelet basis $\{\psi_{lk}\}$ from (\ref{wavbasis}). In sequence space we introduce weighted supremum norms 
\begin{equation}
\|x\|_{\mathcal M(w)}= \sup_{l} \frac{\max_k|x_{lk}|}{w_l},~~\mathcal M(w) =\{(x_{lk}): \|x\|_{\mathcal M(w)}<\infty\},
\end{equation}
 with monotone increasing weighting sequence $(w_l)$ to be chosen. Define further the closed separable subspace $\mathcal M_0(w)$ of  $\mathcal M(w)$ consisting of sequences for which $w_l^{-1}\max_k |x_{lk}|$ converges to zero as $l\to\infty$, equipped with the same norm.

The Bernstein--von Mises theorem will be derived for the case where the posterior distribution is centred at the  random element $\hat \nu (J)= (\hat \nu(J)_{l,k})$ of $\mathcal M_0(w)$ defined as follows
\begin{equation} \label{cent}
\hat \nu(J)_{l,k} \equiv  \int_I \psi_{l k} \nu_0 + \frac{1}{n} \sum_{i=1}^n (A_{\nu_0}^*)^{-1} [\psi_{l k} 1_{\{0\}^c}](X_i),~\quad l \le J-1, k,
\end{equation}
with the convention that $\hat \nu(J)_{l,k}=0$ whenever $l \ge J$ (the operator $(A_{\nu_0}^*)^{-1}$ was defined just after Lemma \ref{adji} above). A standard application of the central limit theorem and of~(\ref{zent}) implies as $n \to \infty$ and under $\mathbb P_{\nu_0}^\mathbb N$ that, for every fixed $k,l$, $$\sqrt n \big(\hat \nu(J)_{l,k} - \int_I \psi_{lk} \nu_0 \big) \to^d N(0, \|(A_{\nu_0}^*)^{-1} [\psi_{lk} 1_{\{0\}^c}]\|_{L^2(\mathbb P_{\nu_0})}^2),$$  and hence in view of (\ref{crlb}) the random variable $\hat \nu(J)$ is a natural centring for a Bernstein--von Mises theorem. Since $\nu \in L^\infty(I)$ the law of $\sqrt n(\nu-\hat \nu(J))$ defines a probability measure in the space $\mathcal M_0(\omega)$ for $\omega$ as in the next theorem. Next, denote by $\mathcal N_{\nu_0}$ the law $\mathcal L(\mathbb X)$ of the centred Gaussian random variable $\mathbb X$ on $\mathcal M(w)$ whose coordinate process has covariances $$E\mathbb X_{l,k} \mathbb X_{l',k'} = \langle (A_{\nu_0}^*)^{-1}(\psi_{lk} 1_{\{0\}^c}), (A_{\nu_0}^*)^{-1}(\psi_{l'k'} 1_{\{0\}^c})\rangle_{L^2(\mathbb P_{\nu_0})}.$$ The proof of the following theorem implies in particular that $\mathcal N_{\nu_0}$ is a tight Gaussian probability measure concentrated on the space $\mathcal M_0(w)$ where weak convergence occurs. Recall (Theorem 11.3.3 in \cite{D02}) that weak convergence of a sequence of probability measures on a separable metric space $(S,d)$ can be metrised by the bounded Lipschitz (BL) metric
\begin{align*}
\beta_S(\kappa,\kappa')&= \sup_{F:S \to \mathbb R, \|F\|_{Lip} \le 1} \left|\int_S F(s) d(\kappa-\kappa')(s) \right|,\\
\|F\|_{Lip} &= \sup_{s \in S}|F(s)|+ \sup_{s\neq t, s,t \in S} \frac{|F(s)-F(t)|}{d(s,t)}.
\end{align*}
\begin{theorem}\label{bvm1}
Suppose that $X_1, \dots, X_n$ are generated from (\ref{increm}) and grant Assumption~\ref{overall}.  Let $\Pi(\cdot|X_1,\dots, X_n)$ be the posterior distribution arising from prior $\Pi=\Pi_J$ in (\ref{prior}) with~$J$ as in (\ref{jprior}). Let $\beta_{\mathcal M_0(\omega)}$ be the BL metric for weak convergence of laws in  $\mathcal M_0(\omega)$, with $\omega=(\omega_l)$ satisfying $\omega_l / l^4 \uparrow \infty$ as $l \to \infty$. Let $\hat \nu_J$ be the random variable in $\mathcal M_0(\omega)$ given by~(\ref{cent}). Then for $\nu \sim \Pi(\cdot|X_1, \dots, X_n)$ and $\mathcal N_{\nu_0}$ as above we have in $\mathbb P_{\nu_0}^\mathbb N$-probability, as $n \to \infty$, $$\beta_{\mathcal M_0(\omega)}\left(\mathcal L\big(\sqrt n (\nu-\hat \nu(J))|X_1, \dots, X_n\big), \mathcal N_{\nu_0} \right) \to 0.$$ 
\end{theorem}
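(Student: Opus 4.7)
The plan is to follow the multi-scale Bernstein--von Mises strategy of Castillo--Nickl \cite{CN14} as adapted to nonlinear inverse problems in \cite{N17}, using the score calculus of Section \ref{opsec} as the semi-parametric machinery. First, Theorem \ref{suprat} restricts attention to the shrinking sup-norm neighborhood $\mathcal{U}_n=\{\nu:\|\nu-\nu_0\|_\infty \le \varepsilon_n\}$ with $\varepsilon_n=n^{-s/(2s+1)}\log^\kappa n$, since the posterior puts mass $1-o_{\PP}(1)$ on $\mathcal{U}_n$. On this event I would write $\nu=e^{v_0+h/\sqrt n}$, so $\|h\|_\infty \lesssim \sqrt n\,\varepsilon_n$, and extend Proposition \ref{lanprop} to a \emph{uniform} LAN expansion for $h$ ranging over this set, controlling the remainder via empirical-process bounds (local maximal inequalities / bracketing entropy for the non-Gaussian likelihood~$p_\nu$).

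Second, for weak convergence in a separable Banach space like $\mathcal{M}_0(\omega)$ it suffices, by \cite[Thm.~3]{CN14}-style arguments, to establish (a) convergence of finite-dimensional marginals $(\langle \sqrt n(\nu-\hat\nu(J)),\psi_{lk}\rangle)_{l \le L, k}$ for each fixed $L$, and (b) uniform control of the $\mathcal{M}_0(\omega)$-tail. For (a), I would apply the Laplace-transform argument: fix a test vector $(t_{lk})$, set $\psi=\sum t_{lk}\psi_{lk}$, and analyse
\[
E\bigl[\exp(t \sqrt n\langle \psi,\nu-\hat\nu(J)\rangle)\mid X_1,\dots,X_n\bigr]
\]
by re-parametrising the prior through the shift $h \mapsto h - t\widetilde{(\psi_{(0)}\nu_0)}$ (the least-favourable direction identified by Lemma \ref{zero}). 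Combined with the uniform LAN expansion and an initial invariance estimate for the prior under such $n^{-1/2}$-shifts (which uses (\ref{intpt}) and the bounded-support structure of $V_{B,J}$), this yields pointwise convergence of the Laplace transform to $\exp(t^2\|\kappa^*\|_{LAN}^2/2)$, with covariance given precisely by (\ref{crlb}). The centering $\hat\nu(J)$ absorbs the linear LAN term by construction, since $(A_{\nu_0}^*)^{-1}[\psi_{lk} 1_{\{0\}^c}]$ is the efficient influence function for $\int\psi_{lk}\nu_0$.

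Third, for tail control in $\mathcal{M}_0(\omega)$, I would separately estimate $\max_k |\langle \nu-\hat\nu(J),\psi_{lk}\rangle|$ for $l<L$, $L \le l < J$, and $l \ge J$. The low-level contributions are handled by the fdd step; for $L \le l < J$ the posterior sup-norm contraction combined with the wavelet inequalities (\ref{wavprop}) gives $\max_k|\langle \nu-\nu_0,\psi_{lk}\rangle|\lesssim \varepsilon_n 2^{-l/2}$, while the empirical part of $\hat\nu(J)_{lk}$ is controlled by a Bernstein-type bound for $(A_{\nu_0}^*)^{-1}[\psi_{lk}1_{\{0\}^c}]$, whose $L^2(\PP_{\nu_0})$-norm grows at most polynomially in $l$; dividing by $\omega_l$ with $\omega_l/l^4\uparrow\infty$ produces $o(1)$ tails. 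For $l \ge J$, $\hat\nu(J)_{lk}=0$ and $|\langle\nu,\psi_{lk}\rangle|$ is bounded using $\nu \in \mathcal{U}_n$ and the smoothness of $\nu_0$. Tightness of $\mathcal{N}_{\nu_0}$ on $\mathcal{M}_0(\omega)$ follows from the same bounds on $\|(A_{\nu_0}^*)^{-1}[\psi_{lk} 1_{\{0\}^c}]\|_{L^2(\PP_{\nu_0})}$.

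The main obstacle will be the uniform LAN expansion. Unlike the Gaussian observation schemes of \cite{CN14, N17}, the likelihood here depends on $\nu$ through the infinite convolution series (\ref{convsum}), and $p_\nu$ is only implicitly accessible; upgrading (\ref{lanee}) to hold uniformly over $h$ of size $\sqrt n\,\varepsilon_n$ thus demands careful empirical-process bounds on the score and its quadratic remainder, with constants that must be compatible with the log-factor budget in $\varepsilon_n$. A secondary subtlety is the $\delta_0$-kernel of $A_{\nu_0}$: every application of the inverse information operator must go through $\tilde\psi_d=\tilde\psi+c\delta_0$ as in (\ref{psid})--(\ref{psioh}), and one must verify throughout that the Dirac component does not contribute to the functionals of interest (which is essentially the content of Lemma \ref{zero} combined with the restriction to $\psi_{(0)}=\psi 1_{\{0\}^c}$).
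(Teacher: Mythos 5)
Your overall architecture (localisation, a uniform LAN/Laplace-transform expansion, a change of measure in the least-favourable direction $\tilde\psi$ from (\ref{tpsi})--(\ref{psioh}), then fdd convergence plus tightness in $\mathcal M_0(\omega)$) is the paper's strategy, and your fdd step is essentially correct. But your tightness argument for the intermediate frequencies $L\le l<J$ has a genuine gap. You propose to bound the posterior part by $\max_k|\langle\nu-\nu_0,\psi_{lk}\rangle|\lesssim\varepsilon_n2^{-l/2}$ using sup-norm contraction and to treat the empirical part of $\hat\nu(J)_{lk}$ separately by Bernstein. After multiplying by $\sqrt n$, the first bound is $\sqrt n\,\varepsilon_n2^{-l/2}=n^{1/(2(2s+1))}(\log n)^{\kappa}2^{-l/2}$, which \emph{diverges} for every fixed $l$ and only becomes $o(\omega_l)$ once $2^{l/2}\gtrsim n^{1/(2(2s+1))}$, i.e.\ once $l$ is already of order $J$. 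So the decoupled estimate covers only the top of the spectrum and fails throughout the middle range; the posterior fluctuation of $\int(\nu-\nu_0)\psi_{lk}$ is $O(1/\sqrt n)$ only \emph{after} centring at the efficient empirical average, and that cancellation is destroyed if you bound the two pieces separately. The paper's proof instead keeps the coupling: it applies Proposition \ref{prop:functbvm} to $F_{lk}(\nu)=-\int A_{\nu_0}(v-v_0)A_{\nu_0}(\tilde\psi_J)\,d\mathbb P_{\nu_0}$ with \emph{uniform} constants over all $(l,k)$, $l<J$, obtaining the sub-Gaussian bound (\ref{subbi}) for $\sqrt n(F_{lk}(\nu)+V_{lk})$ conditional on the data, and then a union bound over the $2^l$ coefficients at level $l$ gives $E[\max_k|\cdot|]=O(l)$ per level; combined with $\omega_l/l^4\uparrow\infty$ this yields (\ref{inexp}) and hence tightness. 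Your route cannot be repaired without introducing exactly this uniform coordinatewise sub-Gaussian control.

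A secondary omission: the perturbation direction must be the wavelet projection $\tilde\psi_J=P_{V_J}\tilde\psi$, not $\tilde\psi$ itself, so that the shifted point stays in the prior's support $V_{B,J}$, and the resulting bias $\int A_{\nu_0}(v-v_0)A_{\nu_0}(\tilde\psi-\tilde\psi_J)\,d\mathbb P_{\nu_0}$ must be shown to be $o(1/\sqrt n)$ uniformly; this is the content of Lemmas \ref{approxi}/\ref{approxii}, which exploit the smoothing properties of $\pi_{\nu_0}\ast(\cdot)$ and the $C^s$ regularity of $\nu_0$, and it is not a routine step. Your proposal does not account for this projection or its bias term.
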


Theorem \ref{bvm1} is proved in Section~\ref{sec:bvm1} and has various implications for posterior-based inference on the parameter~$\nu$. Arguing as in \cite{CN14}, Section 4.2, we could construct credible bands for the unknown L\'evy density $\nu$ with $L^\infty$-diameter shrinking at the rate as in Theorem~\ref{suprat} from Bayesian multi-scale credible bands. We will leave this application to the reader and instead focus on inference on functionals of the L\'evy measure $\nu$ that are continuous, or differentiable, for  $\|\cdot\|_{\mathcal M(\omega)}$ (see Section 4.1 in \cite{CN14}, \cite{C17}).

Theorem~\ref{bvm1} assumes a certain growth at infinity of the weight sequence $\omega_l$. The requirement $\omega_l/\sqrt l \uparrow \infty$ is necessary for the limit process to be a tight Gaussian Borel probability measure in the space $\mathcal M_0(\omega)$, see \cite{CN14}. Similar to the presence of an additional log-factor in Theorem \ref{suprat}, here we need to impose the slightly more restrictive condition $\omega_l / l^4 \uparrow \infty$ for the control of semi-parametric bias terms in our proofs.

\subsection{Bernstein--von Mises theorem for functionals of the L\'evy measure}

We now deduce from Theorem \ref{bvm1}  Bernstein--von Mises theorems for the functionals $$V(t)=\int_{-1/2}^t \nu(x)dx, ~t \in I,$$ which for $t=1/2$ also includes the intensity $\lambda = \int_I d\nu = V(1/2)$ of the underlying Poisson process. From the usual `Delta method' we can then also deduce a Bernstein--von Mises theorem for the distribution function $M(t)= \int_I 1_{(-1/2,t]} d\mu$ of the jump measure $\mu =\nu/\lambda= \nu/\int_I \nu$.  The key to this is the following lemma, proved in (the proof of) Theorem 4 of \cite{CN14}. 
\begin{lemma}\label{parscn}
Suppose the weights $(\omega_l)$ satisfy $\sum_l 2^{-l/2}\omega_l<\infty$. Then the mapping $$L: (\nu_{lk}) \mapsto V = \int_0^\cdot \sum_{l,k}\nu_{lk} \psi_{lk}$$ is linear and continuous from $\mathcal M_0(\omega)$ to $L^\infty(I)$ for the respective norm topologies. 
\end{lemma}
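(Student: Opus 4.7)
The plan is to check that $L$ is well-defined, linear, and bounded. Linearity is immediate from the definition, so the substance is the norm bound: I will show that for every $\nu = (\nu_{lk}) \in \mathcal M_0(\omega)$,
\[
\|L\nu\|_\infty \;\lesssim\; \Big(\sum_l 2^{-l/2} \omega_l\Big) \|\nu\|_{\mathcal M(\omega)},
\]
which gives continuity and, by the summability hypothesis, also shows that the defining series converges absolutely and uniformly so that $V \in L^\infty(I)$ (in fact in $C(I)$).

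The key observation is that for each fixed $t \in I$, at wavelet level $l \ge 0$ only $O(1)$ functions $\psi_{lk}$ contribute non-trivially to $\int_{-1/2}^{t}\psi_{lk}$. Indeed, the (periodised) Daubechies wavelet $\psi_{lk}$ satisfies $\int_I \psi_{lk}=0$ for $l \ge 0$, so whenever $\operatorname{supp}\psi_{lk} \subset (-1/2, t]$ or $\operatorname{supp}\psi_{lk} \cap (-1/2, t] = \emptyset$ the integral $\int_{-1/2}^{t}\psi_{lk}$ vanishes. Since $|\operatorname{supp}\psi_{lk}| \lesssim 2^{-l}$, only a bounded number of translates $k$ have support crossing the boundary point $t$ (and similarly for the periodic boundary near $-1/2$). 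For such $k$, a crude bound gives
\[
\Big|\int_{-1/2}^{t} \psi_{lk}\Big| \;\le\; \|\psi_{lk}\|_{L^1(I)} \;\lesssim\; 2^{-l/2},
\]
by the usual scaling $\psi_{lk}(x) = 2^{l/2}\psi(2^l x - k)$. The coefficient $l=-1$ is handled separately: $\psi_{-1,0}\equiv 1$ gives $|\int_{-1/2}^t \psi_{-1,0}| \le 1$.

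Combining these facts, for any $t \in I$,
\[
|V(t)| \;\le\; \sum_{l\ge -1}\sum_k |\nu_{lk}|\Big|\int_{-1/2}^{t}\psi_{lk}\Big|
\;\lesssim\; \sum_{l\ge -1} \omega_l \|\nu\|_{\mathcal M(\omega)} \cdot 2^{-l/2}
\;\lesssim\; \|\nu\|_{\mathcal M(\omega)}\sum_{l}2^{-l/2}\omega_l,
\]
where the finite sum on $k$ at each level has absorbed the $O(1)$-constant. Taking the supremum over $t$ yields the desired continuity constant. The bound is uniform in $t$, and since the partial sums over $l \le L$ converge uniformly as $L\to\infty$ the limit $V$ is continuous on $I$, hence a fortiori an element of $L^\infty(I)$.

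The only subtle point will be the counting argument for how many wavelets $\psi_{lk}$ at level $l$ straddle the endpoint $t$ (and the periodic boundary at $-1/2$); this requires using the fixed, compactly supported structure of the Daubechies mother wavelet so that $|\operatorname{supp}\psi_{lk}| \le C 2^{-l}$ with $C$ independent of $(l,k)$. Beyond this routine verification, there is no real obstacle, and the result is essentially a deterministic wavelet-analytic statement; the definition of $\mathcal M_0(\omega)$ enters only to ensure uniform convergence of tails so that $V$ lies in the separable subspace $C(I)\subset L^\infty(I)$.
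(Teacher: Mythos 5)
Your argument is correct and is essentially the proof the paper relies on (the lemma is quoted from the proof of Theorem~4 in \cite{CN14}, which uses exactly this reasoning): at each level $l\ge 0$ only $O(1)$ wavelets have a nonvanishing primitive at a given $t$ because of compact support and vanishing mean, each contributing at most $\|\psi_{lk}\|_{L^1}\lesssim 2^{-l/2}$, and summing against the weights gives the operator-norm bound $\sum_l 2^{-l/2}\omega_l$. The only cosmetic discrepancy is the lower integration limit ($0$ in the lemma versus $-1/2$ in your write-up and in the definition of $V$ before Theorem~\ref{bvm2}), which does not affect the counting argument.
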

For the next theorem we require some more definitions: We denote $V_0(t)=\int_{-1/2}^t\nu_0(x)dx$.
Let $\mathcal N_{V_0}$ be the law of the tight Gaussian random variable in $L^\infty(I)$ given by $L(Z), Z \sim \mathcal N_{\nu_0}$.
We define $l_{\nu_0}$ to be the linear mapping $L^\infty(I) \to L^\infty(I)$ with  $l_{\nu_0}[h]=(h V_0(\frac{1}{2})-V_0 h(\frac{1}{2}))/V_0^2(\frac{1}{2})$.
Finally we denote by $\mathcal N'_{M_0}$ the law of the tight Gaussian random variable in~$L^\infty(I)$ given by $l_{\nu_0}[L(Z)]$.

The measures $\mathcal N_{V_0}, \mathcal N'_{M_0}$ have separable range in the image in $L^\infty(I)$ of $\mathcal M_0(\omega)$ under a continuous map. The metrisation of weak convergence of laws towards $\mathcal N_{V_0}, \mathcal N'_{M_0}$ in the non-separable space $L^\infty$ by $\beta_{L^\infty(I)}$ thus remains valid (Theorem 3.28 in \cite{D14}).
\begin{theorem}\label{bvm2}
Suppose that $X_1, \dots, X_n$ are generated from (\ref{increm}) and grant Assumption~\ref{overall}.  Let $\nu \sim \Pi(\cdot|X_1,\dots, X_n)$ be a draw from the posterior distribution arising from prior $\Pi=\Pi_J$ in (\ref{prior}) with $J$ as in (\ref{jprior}) and let $L$ be the linear mapping from Lemma \ref{parscn}. Conditional on $X_1, \dots, X_n$ define $V=L(\nu)$ and $\hat V =L(\hat \nu_J)$ where $\hat \nu_J$ is given in (\ref{cent}).

Then we have as $n \to \infty$ and in $\mathbb P_{\nu_0}^\mathbb N$-probability that $$\beta_{L^\infty(I)}\left(\mathcal L(\sqrt n (V-\hat V)|X_1, \dots, X_n), \mathcal N_{V_0} \right) \to 0.$$  In particular if $N_{\lambda_0}$ is the law on $\mathbb R$ of $L(Z)(\frac{1}{2})$ then as $n \to \infty$, $$\beta_{\mathbb R}\left(\mathcal L(\sqrt n (V(\tfrac{1}{2})-\hat V(\tfrac{1}{2}))|X_1, \dots, X_n), N_{\lambda_0}\right) \to^{\mathbb P_{\nu_0}^\mathbb N} 0.$$
Moreover, if $M = V/V(\frac{1}{2})$ and $\hat M = \hat V/\hat V(\frac{1}{2})$, then as $n \to \infty$, $$\beta_{L^\infty(I)}\left(\mathcal L(\sqrt n (M-\hat M)|X_1, \dots, X_n), \mathcal N'_{M_0} \right) \to^{\mathbb P_{\nu_0}^\mathbb N} 0.$$  
\end{theorem}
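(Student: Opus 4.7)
The plan is to derive all three assertions as a chain of continuous-mapping (and one Delta-method) arguments starting from the multi-scale BvM of Theorem \ref{bvm1}.

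\emph{Step 1 (BvM for $V$ in $L^\infty$).} First pick a weight sequence $\omega=(\omega_l)$ that simultaneously satisfies $\omega_l/l^4\uparrow\infty$ (needed in Theorem \ref{bvm1}) and $\sum_l 2^{-l/2}\omega_l<\infty$ (needed in Lemma \ref{parscn}); $\omega_l=l^5$ works. By Lemma \ref{parscn}, $L\colon\mathcal M_0(\omega)\to L^\infty(I)$ is continuous linear, and any $1$-BL functional $F$ on $L^\infty(I)$ pulls back via $L$ to a BL-functional on $\mathcal M_0(\omega)$ of BL-norm $\le\max(1,\|L\|_{op})$. Consequently Theorem \ref{bvm1} transfers to
\begin{equation*}
\beta_{L^\infty(I)}\bigl(\mathcal L(\sqrt n(V-\hat V)\given X_1,\dots,X_n),\,L_*\mathcal N_{\nu_0}\bigr)\to 0 \quad\text{in }\mathbb P_{\nu_0}^\mathbb N\text{-probability}.
\end{equation*}
The limit $\mathcal N_{V_0}=L_*\mathcal N_{\nu_0}$ is tight in $L^\infty(I)$ because its support is the continuous-linear image of the separable support of $\mathcal N_{\nu_0}$.

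\emph{Step 2 (BvM for $\lambda=V(1/2)$).} Every $V=L(\nu)$ and $\hat V=L(\hat\nu_J)$ is a continuous function on $I$, being the primitive of an integrable function. The evaluation functional $e_{1/2}\colon(C(I),\|\cdot\|_\infty)\to\mathbb R$ is $1$-Lipschitz, so continuous-mapping applied to the Step~1 convergence through $e_{1/2}$ produces the claim for $\sqrt n(V(1/2)-\hat V(1/2))$ with limit $N_{\lambda_0}=\mathcal L(L(Z)(1/2))$.

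\emph{Step 3 (Delta method for $M=V/V(1/2)$).} Direct algebra gives
\begin{equation*}
\sqrt n(M-\hat M)=\frac{\sqrt n(V-\hat V)}{\hat V(1/2)}-\frac{V\cdot\sqrt n(V(1/2)-\hat V(1/2))}{V(1/2)\hat V(1/2)}= l_{\nu_0}\bigl[\sqrt n(V-\hat V)\bigr]+R_n,
\end{equation*}
where $R_n$ collects the second-order corrections $(\hat V(1/2)^{-1}-V_0(1/2)^{-1})$ and $V/(V(1/2)\hat V(1/2))-V_0/V_0(1/2)^2$ multiplying the two $\sqrt n$-factors. By the LLN applied to the explicit form (\ref{cent}), together with (\ref{zent}) to compute the expectation of $(A_{\nu_0}^*)^{-1}[\psi_{lk}1_{\{0\}^c}](X_i)$, one has $\hat V\to V_0$ in $\|\cdot\|_\infty$ in $\mathbb P_{\nu_0}^\mathbb N$-probability; in particular $\hat V(1/2)\to\lambda_0>0$ by Assumption \ref{overall}. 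Combined with the $O_P(n^{-1/2})$-rate of $\|V-\hat V\|_\infty$ under the posterior (obtained from Step~1 via uniform tightness of BL-convergent random conditional laws to a tight limit), the denominators stay bounded away from zero with high posterior probability and $V(1/2)\to\lambda_0$ in posterior probability as well, so that $R_n=o_P(1)$ uniformly in $t\in I$. A final continuous-mapping step for the bounded linear map $l_{\nu_0}\colon L^\infty(I)\to L^\infty(I)$, combined with Slutsky's lemma for the BL-metric, yields the stated weak convergence to $\mathcal N'_{M_0}=\mathcal L(l_{\nu_0}[L(Z)])$.

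\emph{Main obstacle.} The only non-mechanical point is the uniform-in-$t$ control of the Delta-remainder $R_n$ under the random conditional law. This needs (i)~the LLN consistency $\hat V\to V_0$ in $L^\infty$ (routine from (\ref{cent}) and (\ref{zent})), and (ii)~stochastic boundedness of $\sqrt n(V-\hat V)$ in $L^\infty$ under the posterior, which is the standard fact that BL-convergence of a sequence of random probability measures to a tight limit implies inner uniform tightness with $\mathbb P_{\nu_0}^\mathbb N$-probability tending to one. Everything else is a transparent application of continuous-mapping arguments for BL-metric convergence.
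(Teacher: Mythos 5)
Your proposal is correct and follows essentially the same route as the paper: the first two limits via Lemma \ref{parscn}, continuity of $L$ (with a weight sequence such as $\omega_l=l^5$ satisfying both $\omega_l/l^4\uparrow\infty$ and $\sum_l 2^{-l/2}\omega_l<\infty$) and the continuous mapping theorem, and the third via the delta method for the map $V\mapsto V/V(\tfrac12)$ with derivative $l_{\nu_0}$. The only difference is presentational: the paper simply invokes the functional delta method (van der Vaart, Theorem 20.8), whereas you unpack the linearisation and control the remainder $R_n$ by hand using consistency of $\hat V$ and posterior tightness of $\sqrt n(V-\hat V)$ — a legitimate, slightly more self-contained version of the same argument.
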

\begin{proof}
The first two limits are immediate consequences of Theorem~\ref{bvm1}, Lemma~\ref{parscn} and the continuous mapping theorem. For the last limit we apply the Delta method for weak convergence (\cite{vdV98}, Theorem 20.8) to the map $V \mapsto  V/V(\frac{1}{2})$, which is Fr\'echet differentiable from $L^\infty(I) \to L^\infty(I)$ at any $\nu \in L^\infty(I)$ that is bounded away from zero, with derivative $l_{\nu}$.
\end{proof}

Arguing just as before (\ref{crlb}) one shows that the above Gaussian limit distributions all attain the semi-parametric Cram\'er--Rao lower bounds for the problems of estimating $V, M, \lambda=V(\frac{1}{2})$, respectively. In particular they imply that `Bayesian credible sets' are optimal asymptotic frequentist confidence sets for these parameters -- the arguments are the same as in \cite{CN14}, Section 4.1, and hence omitted. These results are the `Bayesian' versions of the Donsker type limit theorems obtained for frequentist estimators in \cite{NR12, Co17}, where the same limit distributions were obtained. 

\subsection{Concluding Remarks}

\noindent

\textbf{Adaptive prior choices:} Our series prior is defined via asymptotic growth of $J$ (see \eqref{jprior}) that depends on $n$ and on knowledge of the smoothness~$s$. A possible extension of our work would be to make the results adaptive to the choice of $J$, e.g., by placing a hyperprior on $J \in \mathbb N$ whose probability mass function is proportional to $\exp(-c2^JL(J))$ with $L(J)=J$ or $=1$. While it seems possible to prove an upper bound for $2^J$ of order $(n/\log n)^{1/(2s+1)}$ with such a hyperprior, it is unclear whether a corresponding lower bound holds as well. Small values of $J$ can entail a large bias and the control of the semi-parametric bias poses considerable difficulties in our proofs. As in \cite{R17}, a self-similarity condition on $\nu$ may help to overcome such problems, but this is beyond the scope of the present paper.

\smallskip

\textbf{Scaling of $\Delta$:} For identifiability reasons, Assumption~\ref{overall} imposes an upper bound on the (fixed) distance between observations $\Delta$. Otherwise the observation distance $\Delta$ enters the contraction rate result in Theorem~\ref{suprat} only via multiplicative constants. In the Bernstein--von Mises results (Theorems~\ref{bvm1} and~\ref{bvm2}), the limiting processes scale with $1/\Delta$, as can be seen from the scaling of $(A_{\nu}^*)^{-1}$ before equation~\eqref{zent}. This suggests that `high-frequency' analogues of our Bernstein-von Mises results, comparable to those in \cite{NRST16}, should hold true as well, with convergence rate $1/\sqrt{n \Delta}$ instead of $1/\sqrt n$.

\smallskip

\textbf{Bernstein--von Mises theorems for general inverse problems:} This paper builds on key ideas for nonparametric Bernstein--von Mises theorems in direct models \cite{CN13,CN14, C14, CR15, C17}. For inverse problems previous work on Bernstein--von Mises theorems treated regression-type problems where the likelihood has a more explicit Gaussian structure, see \cite{N17,MNP17} and also the more recent contributions \cite{GK18, NR18}. In our jump process setting, the log-likelihood function does not have the form of a Gaussian process, but we show how empirical process methods \cite{GineNickl2016} can be used to obtain exact Gaussian posterior asymptotics in such situations as well. Our proof techniques are thus potentially relevant for other models with independent and identically distributed observations.

\section{Proofs of the main theorems}\label{sec_proofs}

\subsection{Asymptotics for the localised posterior distribution} \label{asysec}

The first step will be to localise the posterior distribution near the `true' $\nu_0 \in C^s$ by obtaining a preliminary (in itself sub-optimal) contraction rate for the prior $\Pi$ from (\ref{prior}). Recall the notation $v=\log \nu$ and define
\begin{equation} \label{dn}
D_{n,M} := \left\{\nu: v \in V_{B,J}, \|v - v_0\|_{L^2} \le M \eps_{n}^{L^2}, \|v - v_0\|_{\infty} \le M \eps_{n}^{L^\infty}  \right\}
\end{equation}
with $M$ a constant and $$\eps_{n}^{L^2} = n^{-\frac{s-1/2}{2s+1}}(\log n)^{1/2+\delta},\qquad \eps_{n}^{L^\infty} = n^{-\frac{s-1}{2s+1}}(\log n)^{1/2+\delta}$$ for any $\delta>1/2$. We have the following
\begin{proposition}\label{initio}
For $D_{n,M}$ as in (\ref{dn}), prior $\Pi$ arising from (\ref{prior}) with $J$ chosen as in (\ref{jprior}) and under Assumption~\ref{overall}, we have for any $s>5/2, \delta>1/2$ and every $M$ large enough
\begin{equation} \label{cont1}
\Pi (D_{n,M}^c | X_1, \dots, X_n) \to^{\mathbb P^\mathbb N_{\nu_0}} 0
\end{equation}
as $n \to \infty$. In particular we can choose $M$ in (\ref{dn}) large enough so that the last convergence to zero occurs also for $D_{n,M/2}$ replacing $D_{n,M}$. Moreover, on the set $D_{n,M}$ we also have the same contraction rates for $\nu - \nu_0$ in place of $v-v_0$ with a possibly larger constant $M$.
\end{proposition}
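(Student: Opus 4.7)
The plan is to follow the general three-ingredient posterior contraction template of \cite{GhosalGhoshvanderVaart2000}: a prior-mass lower bound in a KL-neighbourhood of $\nu_0$, a sieve of controlled metric entropy, and exponential tests against distant alternatives. Because no direct stability estimate from the Hellinger distance $h(\mathbb P_\nu,\mathbb P_{\nu_0})$ down to $\|v-v_0\|_{L^2}$ or $\|v-v_0\|_\infty$ is available in this inverse problem, the idea is to first obtain a preliminary Hellinger-type rate $\tilde\eps_n \asymp n^{-s/(2s+1)}(\log n)^{1/2+\delta}$ for the posterior distribution of $h(\mathbb P_\nu,\mathbb P_{\nu_0})$, and then to transfer it to $L^2$- and $L^\infty$-rates for $v-v_0$, paying the polynomial-in-$2^J$ losses visible in the exponents of $\eps_n^{L^2}$ and $\eps_n^{L^\infty}$.

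The Hellinger-rate step relies on standard arguments. The prior-mass condition is verified by combining the interior condition~(\ref{intpt}) with the uniform density of the $u_{lk}$'s on $(-B,B)$ and the approximation bound $\|v_0-P_{V_J}v_0\|_\infty \lesssim 2^{-Js}$ from~(\ref{wavprop}), yielding $\Pi(\|v-v_0\|_\infty \le \tilde\eps_n) \ge \exp(-cn\tilde\eps_n^2)$ for $J$ as in~(\ref{jprior}). A Taylor expansion of the convolution series~(\ref{convsum}), combined with $\nu_0$ being bounded away from $0$ and $\Delta\lambda_0<\pi$, gives $K(\mathbb P_{\nu_0},\mathbb P_\nu) + V_2(\mathbb P_{\nu_0},\mathbb P_\nu) \lesssim \|v-v_0\|_\infty^2$ and thereby the usual evidence lower bound. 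The sieve can be taken as the full prior support $\mathcal V_n = \{e^v : v \in V_{B,J}\}$, since the ellipsoid weights $a_l = 2^{-l}/(l^2+1)$ keep $V_{B,J}$ inside a fixed bounded subset of $C(I)$ uniformly in $J$; the bracketing $\eps$-entropy of the density family for the Hellinger metric grows as $2^J \log(1/\eps) = O(n\tilde\eps_n^2)$, so classical Le~Cam--type Hellinger tests exist and deliver $\Pi(h(\mathbb P_\nu,\mathbb P_{\nu_0})>\tilde\eps_n\mid X_1,\dots,X_n)\to 0$ in $\mathbb P_{\nu_0}^\mathbb N$-probability.

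To transfer to $L^2$, observe that on the sieve $p_\nu, p_{\nu_0}$ are bounded and bounded away from $0$, so $\|p_\nu-p_{\nu_0}\|_{L^2(\mathbb P_\Lambda)}\lesssim \tilde\eps_n$. Linearising $v\mapsto p_\nu$ at $v_0$ yields $p_\nu - p_{\nu_0} = p_{\nu_0} A_{\nu_0}(v-v_0) + R(v,v_0)$ with $\|R\|_{L^2(\mathbb P_\Lambda)} \lesssim \|v-v_0\|_\infty^2$; the remainder is negligible on a slightly thinner subset of the sieve where a crude uniform bound $\|v-v_0\|_\infty = o(1)$ holds (obtained \emph{en route} from the prior support and elementary concentration). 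Projecting onto $V_J$ and inverting $A_{\nu_0}|_{V_J}$ via Lemma~\ref{scoinv}---the deconvolution kernel $\pi_{\nu_0}$ has bounded Fourier symbol thanks to $\Delta\lambda_0<\pi$, but its restriction to $V_J$ composed with multiplication by $1/\nu_0$ can amplify the $L^2(\mathbb P_{\nu_0})\to L^2$ operator norm by at most $2^{J/2}$---gives $\|P_{V_J}(v-v_0)\|_{L^2}\lesssim 2^{J/2}\tilde\eps_n = \eps_n^{L^2}$. Combining with $\|v_0 - P_{V_J}v_0\|_{L^2} \lesssim 2^{-Js}$ and the Bernstein-type embedding $\|P_{V_J}g\|_\infty \lesssim 2^{J/2}\|P_{V_J}g\|_{L^2}$ on $V_J$ (a consequence of~(\ref{wavprop})) produces the claimed $L^\infty$-rate $\eps_n^{L^\infty} = 2^J \tilde\eps_n$.

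The main obstacle is this second step: establishing the $2^{J/2}$-bound on the restricted inverse information operator $\tilde A_{\nu_0}|_{V_J}$ while simultaneously controlling the quadratic linearisation error uniformly over the sieve. The regularity threshold $s>5/2$ enters precisely here, ensuring that both the bias $P_{V_J^\perp}v_0$ and the remainder $R(v,v_0)$ are negligible relative to the noise level $2^{J/2}\tilde\eps_n$. Assembling the three ingredients via the standard posterior contraction theorem (e.g.\ Theorem~7.3.1 in \cite{GineNickl2016}) gives $\Pi(D_{n,M}^c\mid X_1,\dots,X_n)\to 0$, and the statement for $D_{n,M/2}$ follows by choosing $M$ large enough from the outset. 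Finally, the assertion about $\nu-\nu_0$ is a direct consequence of $\nu-\nu_0 = \nu_0(e^{v-v_0}-1)$ together with $|e^t-1|\le 2|t|$ for $|t|\le 1$, valid uniformly on $D_{n,M}$ for $n$ large, which yields the same rates for $\nu-\nu_0$ up to a multiplicative constant depending on $\|\nu_0\|_\infty$.
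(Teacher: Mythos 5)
Your overall architecture (a contraction rate in a weak metric, then a transfer to $L^2$ and $L^\infty$ with losses $2^{J/2}$ and $2^J$) mirrors the paper's, but the route through the Hellinger distance is not the one taken there — and it is exactly the route the paper argues is blocked. The paper avoids Hellinger testing altogether: it applies the generic contraction theorem (Proposition \ref{cont0}) with $d$ equal to the negative-order Sobolev norm $\|\cdot\|_{\mathbb H(\delta)}$ of (\ref{hdelta}), builds the tests from a frequentist plug-in estimator based on the empirical characteristic function with a spectral cut-off (Lemma \ref{concentration}, Proposition \ref{prop:tests}), controls the Kullback--Leibler terms by $\|\nu-\nu_0\|_{L^2}^2$ (Lemma \ref{L2bound}), and then obtains the $L^2$- and $L^\infty$-rates by \emph{pure norm interpolation} on the $2^J$-dimensional prior support ($\|\cdot\|_{L^2}\lesssim 2^{J/2}J^{\delta}\|\cdot\|_{\mathbb H(\delta)}$ plus bias, and $\|g\|_\infty\lesssim 2^{J/2}\|g\|_{L^2}$ for $g\in V_J$), together with Lemma \ref{tedious} to pass between $\nu-\nu_0$ and $v-v_0$. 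No inversion of the score operator is needed at this stage.

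The gap in your version is the Hellinger-to-$L^2$ ``stability estimate'', which is precisely what the paper says is unavailable (see the discussion of \cite{GMS15} after Theorem \ref{suprat}). Two concrete problems. First, to make the quadratic remainder $R(v,v_0)$ negligible you invoke a ``crude uniform bound $\|v-v_0\|_\infty=o(1)$ obtained en route from the prior support and elementary concentration''; but the prior support $V_{B,J}$ only yields $\|v-v_0\|_\infty=O(1)$ (the weights $a_l$ confine $v$ to a fixed ball of $C(I)$), and no elementary concentration argument gives $o(1)$ under the posterior before a contraction result has been proved — this is circular, and without an $o(1)$ bound the linearisation of $\nu\mapsto p_\nu$ is not dominated by its linear term uniformly over the sieve, so quantitative injectivity of the non-linearised map $\nu\mapsto\mathbb P_\nu$ on an $O(1)$-ball would have to be established separately. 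Second, the claimed $2^{J/2}$ amplification of $\tilde A_{\nu_0}|_{V_J}$ is unsubstantiated and points in the wrong direction: since $|\phi_{\nu_0}(k)|\ge e^{-2\Delta\lambda_0}$ for all $k$, the measure $\pi_{\nu_0}$ is a finite signed measure and $\tilde A_{\nu_0}$ is a \emph{bounded} map from $L^2_0(\mathbb P_{\nu_0})$ into $L^2$ (up to the $\delta_0$-component), so no $2^{J/2}$ loss arises from operator inversion; in the paper the losses in $\eps_n^{L^2},\eps_n^{L^\infty}$ come from interpolating between the weak norm and $L^2,L^\infty$ on $V_J$. The factor you insert therefore reproduces the right exponents, but not by a mechanism that is actually justified, while the step that genuinely needs proof is missing.
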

\begin{proof}
This is proved in Section \ref{prelimc} below.
\end{proof}

As a consequence of the previous proposition, if $\Pi^{D_{n,M}}:=\Pi^{D_{n,M}}(\cdot|X_1, \dots, X_n)$ equals the posterior measure arising from the prior $\Pi(\cdot \cap D_{n,M})/\Pi(D_{n,M})$ instead of from $\Pi$, we can deduce the basic inequality
\begin{align} 
&\sup_{B \in \mathcal S_V} |\Pi(B|X_1, \dots, X_n) -\Pi^{D_{n,M}}(B|X_1, \dots, X_n) | \nonumber\\
&\qquad\le 2 \Pi(D_{n,M}^c|X_1, \dots, X_n) \to^{\mathbb P^\mathbb N_{\nu_0}} 0\label{tvlim}
\end{align}
as $n \to \infty$. We now study certain Laplace-transform functionals of the localised posterior measure $\Pi^{D_{n,M}}$. We use the shorthand notation $V_J$ for the $L^2$-closed linear space spanned by the wavelets up to level $J$ and $g_J = P_{V_J}(g)$ for the wavelet projection of $g \in L^2(I)$ onto $V_J$. For a fixed function $\eta:I \to \mathbb R$, consider a perturbation of $\nu$ given by
\begin{align}
&\nu_t = \nu_t^{\eta} := e^{v_t},\label{pert}\\
&v_t = v + \delta_n \Big(\frac{t}{\delta_n \sqrt n} \eta + v_{0,J} -v  \Big) = (1-\delta_n) v + \delta_n \Big(\frac{t}{\delta_n \sqrt n} \eta+ v_{0,J}\Big),\nonumber
\end{align}
where $0<t <\infty$ and $\delta_n \to 0$ such that $\delta_n \sqrt n \to \infty$ is a sequence to be chosen. That the perturbation $\nu_t$ equals a convex combination of points will be useful to deal with the fact that our parameter space has a boundary (see also \cite{N07, N17}).

We have the following key proposition, giving general conditions under which a (sub-) Gaussian approximation for the Laplace transform of general functionals $F(\nu)$ of the posterior distribution holds. Its proof is given in Section~\ref{pertu}.
\begin{prop}\label{prop:functbvm}
Under the hypotheses of Proposition \ref{initio}, suppose $\delta_n$ is chosen such that~\eqref{deltacon} is satisfied and let $\mathcal H_n \subset L^\infty(I)$ be such that \eqref{etaL2}, \eqref{larges} hold uniformly for all $\eta\in\mathcal H_n$. If $T>0$ and if $F: \mathcal V \to \mathbb R$ is any fixed measurable function then 
\begin{align*}
&E^{\Pi^{D_{n,M}}}\left[e^{t \sqrt n F(\nu)} \Big|X_1, \dots, X_n \right] \\
&\qquad= \exp \Big\{\frac{t^2}{2}\|A_{\nu_0}(\eta)\|_{L^2(\mathbb P_{\nu_0})}^2 - \frac{t}{\sqrt n} \sum_{i=1}^n A_{\nu_0}(\eta)(X_i) + r_n \Big\} \times Z_n
\end{align*}
where $r_n=O_{\mathbb P^\mathbb N_{\nu_0}}(a_n)$ as $n \to \infty$ with a nonstochastic null sequence $a_n \to 0$ that is uniform in $|t|\le T$, $\eta\in\mathcal H_n$; and where $$Z_n = \frac{\int_{D_{n,M}} e^{S_n(\nu) + \ell_n(\nu_t)}d\Pi(\nu)}{\int_{D_{n,M}}e^{\ell_n(\nu)}d\Pi(\nu)},\quad \nu_t \text{ as in } (\ref{pert}),$$
$$S_n(\nu) = t \sqrt n \Big(F(\nu) +\int A_{\nu_0}(v-v_0) A_{\nu_0}(\eta)d\mathbb P_{\nu_0}\Big),~~ v=\log \nu,~ v_0 = \log \nu_0, $$ and $A_\nu: L^2(\nu) \to L^2_0(\mathbb P_\nu)$ was defined in Proposition \ref{lanprop}.
\end{prop}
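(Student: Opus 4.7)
The plan is to factor the Laplace transform by a pointwise likelihood expansion. Writing
\begin{equation*}
E^{\Pi^{D_{n,M}}}[e^{t\sqrt n F(\nu)}|X_1,\dots,X_n] = \frac{\int_{D_{n,M}} e^{t\sqrt n F(\nu) + \ell_n(\nu)}\,d\Pi(\nu)}{\int_{D_{n,M}} e^{\ell_n(\nu)}\,d\Pi(\nu)}
\end{equation*}
and inserting $e^{\ell_n(\nu_t)-\ell_n(\nu_t)}$ in the numerator's integrand pulls out the $Z_n$-factor exactly. Since the $t\sqrt n F(\nu)$-piece of $S_n(\nu)$ cancels with the $e^{t\sqrt n F(\nu)}$ prefactor, the claim reduces to the pointwise identity
\begin{equation*}
\ell_n(\nu) - \ell_n(\nu_t) - t\sqrt n\,\langle A_{\nu_0}(v-v_0), A_{\nu_0}\eta\rangle_{L^2(\mathbb P_{\nu_0})} = \tfrac{t^2}{2}\|A_{\nu_0}\eta\|^2_{L^2(\mathbb P_{\nu_0})} - \tfrac{t}{\sqrt n}\sum_{i=1}^n A_{\nu_0}(\eta)(X_i) + r_n
\end{equation*}
holding with $r_n = O_{\mathbb P_{\nu_0}^\mathbb N}(a_n)$ uniformly in $\nu\in D_{n,M}$, $\eta\in\mathcal H_n$, and $|t|\le T$.

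To produce this expansion I would apply Proposition~\ref{lanprop} to $\ell_n(\nu_t)-\ell_n(\nu)$ with base $\nu\in D_{n,M}$ and perturbation direction $h_n := \sqrt n(v_t-v) = t\eta + \delta_n\sqrt n(v_{0,J}-v)$. Condition~\eqref{deltacon}, combined with the $L^\infty$-localisation $\|v-v_0\|_\infty \le M\eps_n^{L^\infty}$ on $D_{n,M}$ and the wavelet approximation estimate $\|v_0 - v_{0,J}\|_\infty \lesssim 2^{-Js}$, ensures $\|h_n\|_\infty$ is bounded uniformly in $n$, which is what the LAN expansion requires. Expanding $\|A_\nu h_n\|^2$ by polarisation and splitting $v_{0,J}-v = -(v-v_0) + (v_{0,J}-v_0)$, the leading linear and quadratic contributions reproduce $\tfrac{t^2}{2}\|A_\nu\eta\|^2 - \tfrac{t}{\sqrt n}\sum A_\nu(\eta)(X_i)$ (which match the right-hand side once $A_\nu$ is replaced by $A_{\nu_0}$), while the cross term $-t\sqrt n(1-\delta_n)\langle A_\nu\eta, A_\nu(v-v_0)\rangle$ pairs, up to an $O(\delta_n)$ correction, with the subtraction on the left of the target identity; every other residual contribution inherits at least one extra factor of $\delta_n$.

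The remainder $r_n$ then decomposes into four manageable pieces: (a) the $\delta_n$-scaled empirical process $\delta_n\sum_i A_{\nu_0}(v-v_0)(X_i)$, mean zero by the score identity and of fluctuation size $\delta_n\sqrt n\,\eps_n^{L^2} = o(1)$ under~\eqref{deltacon}; (b) deterministic quadratic corrections of size $n\delta_n\|v-v_0\|_{L^2}^2\lesssim n\delta_n(\eps_n^{L^2})^2 = o(1)$; (c) bias contributions involving $v_{0,J}-v_0$, negligible since $\|v_{0,J}-v_0\|_\infty\lesssim 2^{-Js}$; and (d) the LAN remainder itself plus the score-perturbation error $\|(A_\nu-A_{\nu_0})\eta\|_{L^2(\mathbb P_{\nu_0})}$, handled via Lipschitz continuity of $\nu\mapsto A_\nu$ on $D_{n,M}$. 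The hard part will be upgrading all these pointwise $o_P(1)$ bounds to a single non-stochastic null rate $a_n$ holding uniformly in $\nu\in D_{n,M}$ and $\eta\in\mathcal H_n$: this requires empirical process machinery (chaining or Talagrand/Bernstein inequalities) over the parametric class $\{A_{\nu_0}(v-v_0):\nu\in D_{n,M}\}$, whose envelope is $\lesssim\eps_n^{L^\infty}$ and $L^2(\mathbb P_{\nu_0})$-diameter $\lesssim \eps_n^{L^2}$, together with a uniform quantification of the third-order Taylor remainder of $\log p_{\nu_t}-\log p_\nu$. This is precisely where the sharp $L^\infty$-localisation from Proposition~\ref{initio} and the smoothness assumption $s>5/2$ become essential: they deliver the small envelopes and convergent entropy integrals needed to absorb the $1/\delta_n$ losses that are implicit in~\eqref{deltacon}.
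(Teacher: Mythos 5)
Your opening reduction is exactly the paper's: factor out $Z_n$ by inserting $\ell_n(\nu_t)$, cancel the $t\sqrt n F(\nu)$ piece against $S_n(\nu)$, and reduce the proposition to the likelihood expansion (\ref{crucexp}) with a remainder that is $o_{\mathbb P_{\nu_0}^{\mathbb N}}(1)$ uniformly over $D_{n,M}$, $\mathcal H_n$ and $|t|\le T$. The gap is in how you propose to prove that expansion.

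First, Proposition \ref{lanprop} cannot be invoked at base point $\nu$ as a black box: it is stated for data $X_i\sim\mathbb P_\nu$ and for a fixed direction $h$, whereas here the data come from $\mathbb P_{\nu_0}$, the direction $h_n$ depends on $n$ and on $\nu$, and the remainder must be uniform over $D_{n,M}$. Since $n\,h^2(\mathbb P_\nu,\mathbb P_{\nu_0})\asymp n(\eps_n^{L^2})^2\to\infty$ on $D_{n,M}$, the product laws $\mathbb P_\nu^{\otimes n}$ and $\mathbb P_{\nu_0}^{\otimes n}$ are not contiguous, so an $o_{\mathbb P_\nu^{\mathbb N}}(1)$ remainder does not transfer to $o_{\mathbb P_{\nu_0}^{\mathbb N}}(1)$. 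Second, and more seriously, you misidentify the source of the crucial cancellation. With $h_n=\sqrt n(v_t-v)=t\eta+\delta_n\sqrt n(v_{0,J}-v)$, the quadratic form $\|A_\nu(h_n)\|^2$ contains a cross term of size $t\,\delta_n\sqrt n\,\langle A_\nu\eta,A_\nu(v-v_{0,J})\rangle=O(\delta_n\sqrt n\,\eps_n^{L^2})=o(1)$; there is no term of the form $-t\sqrt n(1-\delta_n)\langle A_\nu\eta,A_\nu(v-v_0)\rangle$ available to cancel the subtracted inner product $t\sqrt n\int A_{\nu_0}(v-v_0)A_{\nu_0}(\eta)\,d\mathbb P_{\nu_0}$, which is of the divergent order $\sqrt n\,\eps_n^{L^2}$. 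In an expansion based at $\nu$, that cancellation must instead come from the fact that $\frac{t}{\sqrt n}\sum_iA_\nu(\eta)(X_i)$ is \emph{not} centred under $\mathbb P_{\nu_0}$: its mean $t\sqrt n\int A_\nu(\eta)\,d(\mathbb P_{\nu_0}-\mathbb P_\nu)$ equals $-t\sqrt n\int A_{\nu_0}(\eta)A_{\nu_0}(v-v_0)\,d\mathbb P_{\nu_0}$ only to first order in $v-v_0$, and quantifying the second-order error uniformly is precisely the hard analytical content. The paper sidesteps both issues by Taylor-expanding $\ell_n(\nu)$ and $\ell_n(\nu_t)$ around the \emph{true} $\nu_0$ (so every score is $A_{\nu_0}$ and every empirical sum is centred under the data-generating law), computing explicit directional derivatives up to fifth order for the centring of the remainder, and controlling the fluctuations by empirical-process bounds over the finite-dimensional classes indexed by $V_{B,J}$; the long list of conditions in Assumption \ref{collection} records exactly which terms must vanish. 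Your sketch names the right tools for the uniformity step but defers the entire verification, and the structural error above means the argument as written would not close.
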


Given a functional $F$ of interest, Proposition \ref{prop:functbvm} can be used to prove Bernstein--von Mises theorems by selecting appropriate $\eta$ so that $S(\nu)$ vanishes (or converges to zero). When this is the case it remains to deal with $Z_n$ by a change of measure argument for $\nu \mapsto \nu_t$.

\subsection{Change of measure in the posterior}\label{subsec}

We now study the ratio $Z_n$ for $\eta, \delta_n$ satisfying certain conditions, and under the assumption that $\sup_{\nu \in D_{n,M}} |S_n(\nu)|$ is either $O(1)$ or $o(1)$.  Note that by Assumption~\ref{overall}, $v_0=\log \nu_0$ is an `interior' point of the support $$V_{B,J} =\prod_{l=-1}^{J-1}(-Ba_l, Ba_l)^{2^l\vee1} \subset \mathbb R^{2^J},~~a_l = 2^{-l}(l^2+1)^{-1},$$ of the prior $\Pi$. We shall require that $(t/\delta_n \sqrt n) \eta+ v_{0,J}$ is also contained in $V_{B,J} $, implied by 
\begin{align} \label{intc}
t |\langle \eta , \psi_{lk} \rangle| \le \gamma 2^{-l}(l^2+1)^{-1} \sqrt n \delta_n \quad  \forall l <J-1,k, \qquad  \langle \eta , \psi_{lk} \rangle =0  \quad \forall l>J.
\end{align}
Note that under (\ref{intc}) the function $v_t$ from (\ref{pert}) is a convex combination of elements $v, (t/\delta_n \sqrt n) \eta+ v_{0,J}$ of $V_{J,B}$ and hence itself contained in the support $V_{J,B}$ of $\Pi$. We can thus write
$$\frac{\int_{D_{n,M}} e^{\ell_n(\nu_t)}d\Pi(\nu)}{\int_{D_{n,M}}e^{\ell_n(\nu)}d\Pi(\nu)} =\frac{\int_{D_{n,M}^t} e^{\ell_n(\nu)}\frac{d\Pi^t(\nu)}{d\Pi(\nu)} d\Pi(\nu)}{\int_{D_{n,M}}e^{\ell_n(\nu)}d\Pi(\nu)},$$
where $\Pi^t$ is the law of $\nu_t$, absolutely continuous with respect to $\Pi$, and where  $$D_{n,M}^t=\{\nu_t: \nu \in D_{n,M}\}.$$ The measure $\Pi^t$ corresponds to transforming each coordinate $v_{lk}$ of the $2^J$-dimensional product integral defining the prior $\Pi$  into the convex combination $v_{t,lk}=(1-\delta_n) v_{lk} + \delta_n i_{t,lk}$ where $i_{t,lk} = \langle \frac{t}{\delta_n \sqrt n} \eta+ v_{0,J}, \psi_{lk} \rangle $ is a deterministic (under $\Pi$) point in $(-B a_l, Ba_l)=I_{l,B}$ for every $k, l \le J$. The density of the law of $v_{t,lk}$ with respect to $v_{lk}$ is constant on a subinterval of $I_{l,B}$ of length $2B(1-\delta_n)$ and thus has constant density $(1-\delta_n)^{-1}$. The density of the product integrals is then also constant in $v$ and equal to 
\begin{equation} \label{deltac}
\left(\frac{1}{1-\delta_n}\right)^{2^J} = 1+o(1)\text{ whenever } 2^J \delta_n = o(1),
\end{equation}
independently of $\nu$. We conclude that if (\ref{intc}), (\ref{deltac}) hold then
\begin{align} 
\frac{\int_{D_{n,M}} e^{\ell_n(\nu_t)}d\Pi(\nu)}{\int_{D_{n,M}}e^{\ell_n(\nu)}d\Pi(\nu)}&= (1+o(1)) \times \frac{\int_{D_{n,M}^t} e^{\ell_n(\nu)} d\Pi(\nu)}{\int_{D_{n,M}}e^{\ell_n(\nu)}d\Pi(\nu)}\label{aratio}\\
&= (1+o(1)) \times \frac{\Pi(D_{n,M}^t|X_1, \dots, X_n)}{\Pi(D_{n,M}|X_1, \dots, X_n)},\nonumber
\end{align}
where the last identity follows from renormalising both numerator and denominator by $\int_\mathcal V e^{\ell_n(\nu)} d\Pi(\nu)$. The numerator in the last expression is always less than or equal to one and by Proposition \ref{initio} the denominator converges to one in probability, so that we have

\begin{lemma} \label{postupbd}
Suppose $\sup_{\nu \in D_{n,M}} |S_n(\nu)|=O(1)$ holds as $n \to \infty$ and assume $\eta, \delta_n, t$ are such that (\ref{intc}), (\ref{deltac}) hold. Then the random variable $Z_n$ in Proposition~\ref{prop:functbvm} is $O_{\mathbb P^\mathbb N_{\nu_0}}(1)$, uniformly in $\eta$, as $n \to \infty$.
\end{lemma}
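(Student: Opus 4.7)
The plan is to observe that Lemma \ref{postupbd} is essentially a direct corollary of the change-of-measure calculation already performed in the lead-up to equation (\ref{aratio}), combined with the uniform bound on $S_n$.

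First I would bound $e^{S_n(\nu)}$ in the numerator of $Z_n$ pointwise on $D_{n,M}$ by $\exp(\sup_{\nu \in D_{n,M}}|S_n(\nu)|)$, which is $O(1)$ by hypothesis (and deterministic, once one conditions on the sequence of bounds). Pulling this scalar out of the integral yields
\begin{equation*}
Z_n \;\le\; \exp\Bigl(\sup_{\nu \in D_{n,M}}|S_n(\nu)|\Bigr) \cdot \frac{\int_{D_{n,M}} e^{\ell_n(\nu_t)}\,d\Pi(\nu)}{\int_{D_{n,M}}e^{\ell_n(\nu)}\,d\Pi(\nu)}.
\end{equation*}

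Second, the hypotheses (\ref{intc}) and (\ref{deltac}) are exactly those used in the derivation of (\ref{aratio}): (\ref{intc}) ensures that the perturbed function $v_t$ remains inside the support $V_{B,J}$ of the prior so that the push-forward $\Pi^t$ is absolutely continuous with respect to $\Pi$, and (\ref{deltac}) ensures that the resulting Radon--Nikodym density $(1-\delta_n)^{-2^J}$ is $1+o(1)$ uniformly in $\nu$ and $\eta$. Thus the identity (\ref{aratio}) gives
\begin{equation*}
\frac{\int_{D_{n,M}} e^{\ell_n(\nu_t)}\,d\Pi(\nu)}{\int_{D_{n,M}}e^{\ell_n(\nu)}\,d\Pi(\nu)} \;=\; (1+o(1))\cdot \frac{\Pi(D_{n,M}^t \given X_1, \dots, X_n)}{\Pi(D_{n,M}\given X_1, \dots, X_n)}.
\end{equation*}

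To conclude, the numerator is trivially bounded by $1$ since it is a posterior probability, while the denominator converges to $1$ in $\mathbb P_{\nu_0}^\mathbb N$-probability by Proposition \ref{initio}. Combining these pieces gives $Z_n = O_{\mathbb P_{\nu_0}^\mathbb N}(1)$. The required uniformity in $\eta$ is inherited at each step: the bound on $S_n$ is uniform by hypothesis, the factor $(1-\delta_n)^{-2^J}$ does not depend on $\eta$ at all, and $\Pi(D_{n,M}^t\given X_1, \dots, X_n)\le 1$ holds uniformly. There is really no substantial obstacle here; the work has already been done in Section \ref{subsec}, and the lemma simply packages it into a clean statement to be invoked when verifying the Bernstein--von Mises expansion via Proposition \ref{prop:functbvm}.
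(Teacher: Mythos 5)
Your proposal is correct and follows essentially the same route as the paper: the paper establishes (\ref{aratio}) under (\ref{intc}) and (\ref{deltac}) and then notes that the posterior probability in the numerator is at most one while the denominator tends to one by Proposition \ref{initio}, with the uniform bound $e^{S_n(\nu)}\le \exp(\sup_{\nu\in D_{n,M}}|S_n(\nu)|)=O(1)$ factored out exactly as you do. Nothing is missing.
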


To prove the exact asymptotics in the Bernstein--von Mises theorem we need:

\begin{lemma}\label{postconv}
Suppose $\eta, \delta_n$ are such that (\ref{intc}), (\ref{deltac}) hold and assume in addition that $\|\eta\|_\infty \le d$ for some fixed constant $d$. 

A) Let $D_{n,M}$ be as in (\ref{dn}) and define the set $D_{n,M}^t = \{\nu_t : \nu \in D_{n,M}\}$. Then for all $n \ge n_0(t)$ and $M$ large enough we have $D_{n,M/2} \subset D_{n,M}^t$ and thus by Proposition \ref{initio} also $\Pi(D_{n,M}^t|X_1, \dots, X_n) \to 1$ in $\mathbb P_{\nu_0}^\mathbb N$-probability.

B) Assume also that $\sup_{\nu \in D_{n,M}} |S_n(\nu)|=o(1)$ then $Z_n$ from Proposition \ref{prop:functbvm} satisfies $Z_n  = 1+o_{\mathbb P_{\nu_0}^\mathbb N}(1)$ as $n \to \infty$.
\end{lemma}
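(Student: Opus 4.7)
The plan is to establish part (A) by an explicit inversion of the affine perturbation $v \mapsto v_t$, and then to deduce part (B) from part (A) via the identity~(\ref{aratio}) already derived in the text. For~(A), given $\mu = e^u \in D_{n,M/2}$, the unique preimage under $\nu \mapsto \nu_t$ is $\nu = e^v$ with
\[
v := \frac{u - (t/\sqrt n)\eta - \delta_n v_{0,J}}{1-\delta_n},
\]
and I must verify that $v \in V_{B,J}$, $\|v-v_0\|_{L^2} \le M\eps_n^{L^2}$, and $\|v-v_0\|_\infty \le M\eps_n^{L^\infty}$.

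The coefficient bound $|\langle v,\psi_{lk}\rangle| \le B a_l$ is the heart of the argument. At scales $l \ge J$ every summand defining $v$ has vanishing $\psi_{lk}$-coefficient (for $\eta$ by~(\ref{intc})), so the prior-support constraint is automatic there. At scales $l \le J-1$ I use $\langle v_{0,J},\psi_{lk}\rangle = \langle v_0,\psi_{lk}\rangle$ and the decomposition $\langle u,\psi_{lk}\rangle = \langle v_0,\psi_{lk}\rangle + \langle u - v_0,\psi_{lk}\rangle$ to rewrite
\[
\langle v,\psi_{lk}\rangle = \langle v_0,\psi_{lk}\rangle + \frac{\langle u-v_0,\psi_{lk}\rangle - (t/\sqrt n)\langle \eta,\psi_{lk}\rangle}{1-\delta_n}.
\]
Now~(\ref{intpt}) gives $|\langle v_0,\psi_{lk}\rangle| \le (B-\gamma)a_l$, (\ref{intc}) gives $(t/\sqrt n)|\langle \eta,\psi_{lk}\rangle| \le \gamma a_l \delta_n$, and $|\langle u-v_0,\psi_{lk}\rangle| \le \|u-v_0\|_{L^2} \le (M/2)\eps_n^{L^2}$. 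The only non-trivial step — and the main obstacle — is to show the last quantity is $\le (\gamma/2)a_l$ uniformly in $l \le J-1$; this is tightest at $l = J-1$, where it reduces to $n^{-(s-3/2)/(2s+1)}\log^{5/2+\delta}n \to 0$, comfortably absorbed by the standing assumption $s > 5/2$. Combining the three estimates, for $\delta_n \le 1/4$ one finds $|\langle v,\psi_{lk}\rangle| \le (B-\gamma)a_l + \gamma a_l(1/2 + \delta_n)/(1-\delta_n) \le B a_l$, so $v \in V_{B,J}$.

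The norm bounds are immediate from
\[
v - v_0 = \frac{(u-v_0) - (t/\sqrt n)\eta + \delta_n(v_0 - v_{0,J})}{1-\delta_n}
\]
by the triangle inequality: the middle term has sup norm $\le d|t|/\sqrt n = o(\eps_n^{L^\infty})$ since $n^{-1/2} = o(n^{-(s-1)/(2s+1)})$, the last has sup norm $\lesssim \delta_n 2^{-Js} = o(\eps_n^{L^\infty})$ by~(\ref{wavprop}), and $(M/2)/(1-\delta_n) \le M$ once $\delta_n \le 1/2$; the $L^2$-case is analogous. This proves $D_{n,M/2} \subset D_{n,M}^t$, and $\Pi(D_{n,M}^t|X_1,\dots,X_n)\to 1$ then follows from Proposition~\ref{initio} applied with $M$ so large that also $\Pi(D_{n,M/2}^c|X_1,\dots,X_n)\to 0$.

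For part (B), the hypothesis $\sup_{\nu \in D_{n,M}}|S_n(\nu)| = o(1)$ yields $e^{S_n(\nu)} = 1 + o(1)$ uniformly on $D_{n,M}$, hence
\[
Z_n = (1+o(1))\,\frac{\int_{D_{n,M}} e^{\ell_n(\nu_t)}\,d\Pi(\nu)}{\int_{D_{n,M}} e^{\ell_n(\nu)}\,d\Pi(\nu)}.
\]
The identity~(\ref{aratio}) --- valid under the standing assumptions~(\ref{intc}) and~(\ref{deltac}) --- identifies this ratio with $(1+o(1))\Pi(D_{n,M}^t|X_1,\dots,X_n)/\Pi(D_{n,M}|X_1,\dots,X_n)$, and both conditional probabilities tend to $1$ in $\mathbb P_{\nu_0}^\mathbb N$-probability (the denominator by Proposition~\ref{initio}, the numerator by part (A)), giving $Z_n = 1 + o_{\mathbb P^\mathbb N_{\nu_0}}(1)$.
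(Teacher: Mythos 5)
Your proposal is correct and follows essentially the same route as the paper: you invert the affine perturbation explicitly (your preimage $v$ is algebraically identical to the paper's $z(\nu) = v_{0,J} + ((u-v_{0,J}) - (t/\sqrt n)\eta)/(1-\delta_n)$), verify the $L^2$, $L^\infty$ and coefficient constraints via the same reduction $\eps_n^{L^2} = o(2^{-J}(J^2+1)^{-1})$ guaranteed by $s>5/2$, and deduce B) from (\ref{aratio}) exactly as the paper does. The only cosmetic difference is that you bound the wavelet coefficients of $u-v_0$, $\eta$ and $v_{0,J}$ separately (invoking (\ref{intc}) for the $\eta$ term), whereas the paper bounds $|\langle z(\nu)-v_0,\psi_{lk}\rangle|$ by $\|z(\nu)-v_0\|_{L^2}$ in one step.
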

\begin{proof}
A) Let $\nu \in D_{n,M/2}$ be arbitrary. We need to show that there exists $\zeta=\zeta(\nu) \in D_{n,M}$ such that $\zeta_t=\nu$. For $v=\log \nu$ notice that by definition of $D_{n,M/2}$ we have $\|v-v_{0,J}\|_{L^2} \le \|v-v_0\|_{L^2} \le (M/2) \eps_{n}^{L^2}$ and similarly $\|v- v_{0,J}\|_{\infty} \le (M/2) \eps_{n}^{L^\infty}$. Now define $\zeta = e^z$ where $$z= z(\nu) := v_{0,J} + \frac{(v- v_{0,J}) - \frac{t}{\sqrt n} \eta}{1-\delta_n}, ~\nu \in D_{n,M/2}.$$ Then by definition 
\begin{align*}
z_t  &= (1-\delta_n) z + \frac{t}{\sqrt n} \eta + \delta_n v_{0,J} \\
& = (1-\delta_n) v_{0,J} + (v-v_{0,J}) - \frac{t}{\sqrt n} \eta + \frac{t}{\sqrt n} \eta + \delta_n v_{0,J}  = v
\end{align*}
so $\zeta_t(\nu)=\nu$ follows. It remains to verify that also $\zeta(\nu) \in D_{n,M}$ for every $\nu \in D_{n,M/2}$. To see this we let $n$ large enough such that in particular $\delta_n<1/4$ and then
\begin{align} \label{zweirat}
\|z(\nu)-v_0\|_{L^2} \le \|v_0-v_{0,J}\|_{L^2} + \frac{4}{3} \|v-v_{0,J}\|_{L^2} + \frac{4t}{3 \sqrt n} \|\eta\|_{L^2} \le M\eps_{n}^{L^2} 
\end{align}
using $\|v_0-v_{0,J}\|_{L^2} \lesssim 2^{-Js}=o(\eps_n^{L^2})$ from (\ref{wavprop}) and also $1/\sqrt n =o(\eps_n^{L^2})$. The same arguments imply $$\|z(\nu)-v_0\|_\infty   \le M\eps_{n}^{L^\infty}.$$ 
Finally we need to check that $z(\nu) \in V_{J,B}$ holds true. We notice that for all $l \le J$ $$|\langle z(\nu)-v_0, \psi_{lk}\rangle | \le \|z(\nu)-v_0\|_{L^2} \le \gamma 2^{-l}(l^2+1)^{-1} = \gamma a_l$$ is implied by 
$$\eps_{n}^{L^2} \approx n^{-\frac{s-1/2}{2s+1}}(\log n)^{1/2+\delta}=o(2^{-J}(J^2+1)^{-1}),~~~s > 5/2,$$
for $n$ large enough, so that from Assumption \ref{overall} and (\ref{zweirat}) we deduce
\begin{align*}
|\langle z(\nu), \psi_{lk} \rangle| &\le |\langle v_0, \psi_{lk}\rangle| + |\langle z(\nu)-v_0, \psi_{lk} \rangle| 
 \le (B-\gamma)a_l +\gamma a_l,~~l \le J-1,
\end{align*}
for $n$ large enough, hence $\zeta \in V_{J,B}$. The last claim in Part A) now follows directly from Proposition \ref{initio}, and Part B) also follows, from (\ref{aratio}).
\end{proof}

\subsection{Proof of Theorem \ref{suprat}}

Given the results from Sections \ref{asysec}, \ref{subsec}, the proof follows ideas in \cite{C14}. By (\ref{tvlim}) it suffices to prove the theorem with the posterior $\Pi(\cdot|X_1, \dots, X_n)$ replaced by $\Pi^{D_{n,M}}(\cdot|X_1, \dots, X_n)$. Using that $\nu=e^v$ are uniformly bounded and that $v_J=P_{V_J}v=v$ for $v \sim \Pi^{D_{n,M}}(\cdot|X_1, \dots, X_n)$, we can write
$$\|\nu - \nu_0\|_\infty \lesssim \|v-v_0\|_\infty \leq \|v_J - v_{0,J}\|_\infty + \|v_{0,J}-v_0\|_\infty.$$ The second term is of deterministic order $2^{-J_ns}=O(n^{-s/(2s+1)})$ by (\ref{wavprop}) and since $v_0 = \log \nu_0 \in C^s$, so it remains to deal with the first. We can write, using (\ref{wavprop}) again,
\begin{align} \label{wavred}
\|v_J - v_{0,J}\|_\infty &= \sup_x \bigg| \sum_{\ell < J, m} \langle v-v_0, \psi_{\ell m} \rangle \psi_{\ell m}(x)\bigg| \notag \\
& \lesssim \sum_{\ell < J} \frac{2^{\ell/2}}{\sqrt n} (\log n)^{1/2+\delta}\max_{m=0,\dots, 2^\ell-1} \frac{\sqrt n}{(\log n)^{1/2+\delta}}\left|\langle v-v_0, \psi_{\ell m} \rangle\right| \notag \\
& \lesssim \frac{2^{J/2} (J+1)}{\sqrt n} (\log n)^{1/2+\delta} \max_{\ell <J, m=0,\dots, 2^\ell-1} \sqrt n \left|\langle v-v_0, c_{\ell J}\psi_{\ell m} \rangle\right|,
\end{align}
where we have set $c_{\ell J} =\frac{2^{\ell/2}}{2^{J/2}}(\log n)^{-1/2-\delta}$, bounded by $1$ since $\ell \le J$.

Fix $\ell<J, m$ for the moment and let $\tilde \psi \equiv (\tilde \psi)_{\ell m}$ be the absolutely continuous part (\ref{psioh}) of $\tilde \psi_d$ from (\ref{tpsi}) where we choose $\psi = c_{\ell J} \psi_{\ell m}1_{I\setminus\{0\}}.$  We will apply Proposition~\ref{prop:functbvm} to the functional $F(\nu) = \langle v-v_0, c_{\ell J} \psi_{\ell m} \rangle$ and for the choices 
\begin{equation}\label{etadelta}\eta = \tilde \psi_J \quad\text{and}\quad \delta_n= \frac{K2^J(J^2+1)}{\sqrt{n}},\end{equation}
where $K>0$ is a constant. To bound the term $S_n(\nu)$ in Proposition~\ref{prop:functbvm} we need the following approximation lemma.

\begin{lemma} \label{approxi}
For any $\psi = c_{\ell J} \psi_{\ell m}1_{I\setminus\{0\}}$ with fixed $\ell < J, m,$ let $\tilde \psi_d $ be the corresponding finite measure defined in (\ref{tpsi}), let $\tilde \psi$  be its absolutely continuous part from (\ref{psioh}), and let $\tilde \psi_J=P_{V_J}(\tilde \psi)$ be its wavelet projection onto $V_J$. Then we have, for some constant $c_0$ independent of $\ell, m, J$, that
$$\left| c_{\ell J}\int_I (v-v_0) \psi_{\ell m}+\int_I A_{\nu_0}(v-v_0) A_{\nu_0}(\tilde \psi_J)d\mathbb P_{\nu_0} \right| \leq c_0 \frac{\|\nu-\nu_0\|_{L^2}}{ 2^{J}(\log n)^{1/2+\delta}}.$$
\end{lemma}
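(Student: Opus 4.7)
The plan is to exploit Lemma \ref{zero} to re-express the LHS as an approximation-error term, bound it by Cauchy--Schwarz, and then control the wavelet tail of $\tilde\psi$ at resolution $J$ via Fourier analysis.

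First I would apply Lemma \ref{zero} with $h=v-v_0 \in L^\infty \subset L^2(\nu_0)$ (valid on $D_{n,M}$) and $\psi=c_{\ell J}\psi_{\ell m}1_{I\setminus\{0\}}$ (which satisfies $\psi(0)=0$), obtaining
\[
\int_I A_{\nu_0}(v-v_0)\,A_{\nu_0}(\tilde\psi)\,d\mathbb P_{\nu_0} \;=\; -c_{\ell J}\langle v-v_0,\psi_{\ell m}\rangle,
\]
the equality $\langle v-v_0,\psi\rangle=c_{\ell J}\langle v-v_0,\psi_{\ell m}\rangle$ holding because $\{0\}$ is Lebesgue-null. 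The quantity to bound thus reduces to $\int_I A_{\nu_0}(v-v_0)\,A_{\nu_0}(\tilde\psi_J-\tilde\psi)\,d\mathbb P_{\nu_0}$. A Cauchy--Schwarz step in $L^2(\mathbb P_{\nu_0})$, combined with the continuity of $A_{\nu_0}\colon L^2(\nu_0)\to L^2_0(\mathbb P_{\nu_0})$ from Proposition \ref{lanprop}, the equivalence of $L^2(\nu_0)$ and $L^2(I)$ (Assumption \ref{overall}), and the mean-value estimate $\|v-v_0\|_{L^2}\lesssim\|\nu-\nu_0\|_{L^2}$, dominates this by a constant multiple of $\|\nu-\nu_0\|_{L^2}\,\|\tilde\psi-\tilde\psi_J\|_{L^2}$. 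The task then reduces to showing
\[
\|\tilde\psi-\tilde\psi_J\|_{L^2}\;\lesssim\;\frac{1}{2^J(\log n)^{1/2+\delta}}
\]
uniformly in $\ell,m,J$.

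I would establish this wavelet-tail bound by Fourier analysis on $I$. Splitting $\pi_{\nu_0}=e^{\nu_0(I)}\delta_0+\tilde\pi_{\nu_0}$ and $\mathbb P_{\nu_0}=e^{-\Delta\nu_0(I)}\delta_0+\mathbb P_{\nu_0}^{\mathrm{ac}}$, where the AC remainders lie in $C^s$, substitution into (\ref{psid}) expresses $\tilde\psi$ as a leading term proportional to $\psi\,\mathbb P_{\nu_0}^{\mathrm{ac}}/\nu_0^2$ plus three strictly smoother remainders involving convolutions against $\tilde\pi_{\nu_0}$ or $\mathbb P_{\nu_0}^{\mathrm{ac}}$. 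Since $\nu_0\in C^s$, the Fourier coefficients of $\nu_0$, $\tilde\pi_{\nu_0}$ and $\mathbb P_{\nu_0}^{\mathrm{ac}}$ all decay at rate $|k|^{-s}$; together with the uniform bound $|1/\phi_{\nu_0}(k)|\leq e^{2\Delta\lambda_0}$ (immediate from $\lambda_0=\int_I\nu_0<\infty$), the standard product-rule estimate for the Fourier transform of a product with $C^s$ data yields $|\hat{\tilde\psi}(k)|\lesssim c_{\ell J}\,2^{\ell/2}\,|k|^{-s}$ for $|k|\gtrsim 2^\ell$. Summing this tail and inserting $c_{\ell J}^2=2^{\ell-J}(\log n)^{-1-2\delta}$ together with $\ell\leq J-1$ gives
\[
\|\tilde\psi-\tilde\psi_J\|_{L^2}^2\;\lesssim\;\frac{2^{2\ell-2Js}}{(\log n)^{1+2\delta}}\;\leq\;\frac{2^{-2J}}{(\log n)^{1+2\delta}},
\]
valid whenever $s\geq 2$, which is comfortably covered by $s>5/2$.

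The main obstacle will be the Fourier bookkeeping in the last step: tracking how the frequency content of $\psi$ propagates through the composition of multiplications by $\nu_0^{\pm 1}$ and convolutions with $\pi_{\nu_0}$ and $\mathbb P_{\nu_0}$ in (\ref{psid}), and making precise the heuristic that the ``leading'' contribution inherits from $\psi$ only the rough frequency content near scale $\ell$ while the remaining terms are genuinely smoother. The uniform boundedness of $1/\phi_{\nu_0}$ is what prevents the high-frequency decay from being amplified along the way.
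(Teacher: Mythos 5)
Your opening reduction is sound and parallels the paper's: Lemma \ref{zero} turns the left-hand side into $\left|\int_I A_{\nu_0}(v-v_0)A_{\nu_0}(\tilde\psi_J-\tilde\psi)\,d\mathbb P_{\nu_0}\right|$, and Cauchy--Schwarz together with the continuity of $A_{\nu_0}$ (Lemma \ref{sup}) reduces everything to showing $\|\tilde\psi-\tilde\psi_J\|_{L^2}\lesssim 2^{-J}(\log n)^{-1/2-\delta}$. (The paper instead keeps the pairing and uses Parseval plus an $\ell^1$-type localisation bound, but your Cauchy--Schwarz variant would give the same rate if the tail bound held.) Your decomposition of $\tilde\psi$ into a leading term $\propto\psi\,g$ with $g\in C^s$ plus genuinely smoother remainders is also correct, and the smoother remainders are handled adequately.

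The gap is in the Fourier-analytic control of the critical term $c_{\ell J}\psi_{\ell m}g$. The claimed pointwise bound $|\hat{\tilde\psi}(k)|\lesssim c_{\ell J}2^{\ell/2}|k|^{-s}$ for $|k|\gtrsim 2^\ell$ is false: the ``high--low'' contribution $\hat\psi_{\ell m}(k)\hat g(0)$ is of size $\approx 2^{-\ell/2}$ at $|k|\approx 2^{\ell+1}$, whereas your bound there is $2^{\ell(1/2-s)}=2^{-2\ell}$ for $s=5/2$. More fundamentally, when $\ell=J-1$ the function $\psi_{\ell m}$ carries a \emph{constant fraction} of its $L^2$ mass at Fourier frequencies above $2^J$, so the Fourier tail $\sum_{|k|>2^J}|\widehat{\psi_{\ell m}g}(k)|^2$ is of order one, not $2^{-2Js}$ --- and in any case $P_{V_J}$ is a wavelet projector, not a Fourier truncation, so summing Fourier tails does not bound $\|\tilde\psi-\tilde\psi_J\|_{L^2}$. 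Following your route literally one only gets $\|\tilde\psi-\tilde\psi_J\|_{L^2}\lesssim c_{\ell J}\cdot O(1)$, which misses the target by a factor $2^{J}$. The reason the wavelet tail is nevertheless $O(c_{\ell J}2^{-J})$ is that $\psi_{\ell m}\in V_{\ell+1}\subset V_J$, so for $l>J$ one has $\langle\psi_{\ell m}g,\psi_{lk}\rangle=\int\psi_{\ell m}\psi_{lk}\,(g-g(x_{lk}))$ by orthogonality, and the mean value theorem applied to $g$ (only $g\in C^1$ is needed here) plus support localisation gives $|\langle\psi_{\ell m}g,\psi_{lk}\rangle|\lesssim 2^{-l}\,2^{\ell/2}2^{-l/2}$ for the $O(2^{l-\ell})$ overlapping $k$'s. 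This orthogonality/vanishing-moment input --- which your Fourier bookkeeping cannot see, since $|1/\phi_{\nu_0}|$ being bounded does nothing about the slowly decaying Fourier tail of $\psi_{\ell m}$ itself --- is exactly how the paper closes the argument, and it is the missing idea in your proposal.
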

\begin{proof}
We notice that Lemma~\ref{zero} implies $$c_{\ell J} \int_I (v-v_0) \psi_{\ell m} =c_{\ell J}  \int_I (v-v_0) \psi_{\ell m}1_{I\setminus\{0\}} = -\int_I A_{\nu_0}(v-v_0) A_{\nu_0}(\tilde \psi)d\mathbb P_{\nu_0},$$ so that by linearity of the operator $A_{\nu_0}$ and Lemma \ref{adji} it suffices to bound
\begin{align*}
\int_I A_{\nu_0}(v-v_0) A_{\nu_0}(\tilde \psi_J-\tilde \psi)d\mathbb P_{\nu_0} &= \int_I \nu_0 A^*_{\nu_0}[A_{\nu_0}(v-v_0)] (\tilde \psi_J - \tilde \psi) \\
& = \sum_{l>J} \sum_k \langle h(\nu, \nu_0), \psi_{lk} \rangle \langle \tilde \psi, \psi_{lk} \rangle,
\end{align*}
where we have used  Parseval's identity, and the shorthand notation $h(\nu, \nu_0):=\nu_0 A^*_{\nu_0}[A_{\nu_0}(v-v_0)]$. 
Now $\tilde\psi$ is the absolutely continuous part of $\tilde \psi_d$ which according to \eqref{psid} (with $\Delta=1$ without loss of generality) is given by
\begin{align*}
\tilde \psi_d &= -\frac{1}{\nu_0}\, \pi_{\nu_0} \ast \bigg(\Big(\pi_{\nu_0}(-\cdot) \ast \frac{\psi}{{\nu_0}}\Big) \mathbb P_{\nu_0}\bigg) \\
&= -\frac{e^{2 \nu_0(I)}}{\nu_0}\bigg(\sum_{\iota=0}^{\infty}\sum_{\kappa=0}^\infty \frac{(-1)^{\iota+\kappa}}{\iota!\kappa!}\Big(\nu_0^{\ast \iota}\ast\nu_0(-\cdot)^{\ast \kappa}\ast\frac{\psi}{\nu_0}\Big)\PP_{\nu_0}\bigg).
\end{align*}
By standard properties of convolutions, using (\ref{convsum}) and since $\psi/\nu_0$ is absolutely continuous, removing the discrete part of $\tilde \psi_d$ means removing Dirac measure from the series expansion of $\mathbb P_{\nu_0}$ -- denote the resulting absolutely continuous measure by $P_{\nu_0}$. First we consider the part $\bar \psi$ of $\tilde \psi$ corresponding to the terms in the last series where either $\iota>0$ or $\kappa>0$, so that not all of the convolution factors in $$\nu_0^{\ast \iota}\ast\nu_0(-\cdot)^{\ast \kappa}\ast\frac{\psi}{\nu_0}$$ are Dirac measures $\delta_0$. Since $C^s(I), s>5/2,$ is imbedded into the standard periodic Sobolev space $H^\alpha(I), \alpha \le 2,$ we can use the basic convolution inequality $\|f \ast g \|_{C^\alpha(I)} \le \|f\|_{H^\alpha(I)}\|g\|_{L^2}, \alpha =0,2,$ (proved, e.g., just as Lemma 4.3.18 in \cite{GineNickl2016}), the fact that $\psi/\nu_0 = c_{\ell J}\psi_{\ell m}/\nu_0$ is bounded in $L^2=H^0$, and the multiplier property $\|fg\|_{H^2}\lesssim\|f\|_{C^{2}}\|g\|_{H^2}$ combined with the fact that the density of $P_{\nu_0}$ is contained in $C^s(I)\subset C^2(I),$ to deduce that $\bar \psi$ is contained in $C^2(I)$ and thus, by (\ref{wavprop})
\begin{align*}
\bigg|\sum_{l>J} \sum_k \langle h(\nu, \nu_0), \psi_{lk} \rangle \langle \bar \psi, \psi_{lk} \rangle \bigg| &
\le \sum_{l>J} \| \langle h(\nu, \nu_0), \psi_{l\cdot} \rangle\|_{L^2} \|\langle \bar \psi, \psi_{l\cdot} \rangle\|_{L^2}\\
&\lesssim \sum_{l>J} \|\nu-\nu_0\|_{L^2}2^{-2l}\lesssim \|\nu-\nu_0\|_{L^2}2^{-2J},
\end{align*}
which is of the desired order.

Setting $\iota=\kappa=0$ in the preceding representation of $\tilde \psi$ and using the convolution series representation of $P_{\nu_0}$ (without discrete part) yields the `critical' term which is given by $-\psi g$ where
\[g=c\frac{1}{\nu_0^2}\sum_{j=1}^\infty \frac{\nu_0^{\ast j}}{j!},\] for a suitable constant $c>0$. By arguments similar to above the function $g$ is at least in $C^2$ and for $x_{lk}$ the mid-point of the support set $S_{lk}$ of $\psi_{lk}$ (an interval of width $O(2^{-l})$ at most) we can write
\begin{align*}
\langle \psi_{\ell m} g, \psi_{lk} \rangle&= \int_I \psi_{\ell m} (g-g(x_{lk})+g(x_{lk})) \psi_{lk} \\
&= \int_I \psi_{\ell m} \psi_{lk} (g-g(x_{lk})) + g(x_{lk}) \int_I \psi_{\ell m} \psi_{lk}.
\end{align*}
The last term vanishes by orthogonality ($\ell \le J < l$), and using the mean value theorem the absolute value of the first is bounded by $$\|g'\|_\infty \int_{S_{lk}} |x-x_{lk}| |\psi_{\ell m}(x)| |\psi_{lk}(x)|dx \lesssim 2^{-l} \int_I |\psi_{\ell m}(x)| |\psi_{lk}(x)|dx.$$ Then, using (\ref{wavprop}) and the standard convolution inequalities for $L^2$-norms,
\begin{align*}
&\sum_{l>J} 2^{-l} \sum_k |\langle h(\nu, \nu_0), \psi_{lk} \rangle| \int_I |\psi_{\ell m}| |\psi_{lk}|  \\
&\qquad\le \sum_{l>J} 2^{-l} \|h(\nu,\nu_0)\|_{L^2} \int_I |\psi_{\ell m}(x)| \sum_k |\psi_{lk}(x)| dx \\
 &\qquad\lesssim \sum_{l>J} 2^{-l/2} \|h(\nu, \nu_0)\|_{L^2} \|\psi_{\ell m}\|_{L^1} \lesssim 2^{-J/2} 2^{-\ell/2}\|\nu-\nu_0\|_{L^2}
\end{align*}
Scaling the last estimate by a multiple of $c_{\ell J}=2^{\ell/2-J/2}(\log n)^{-1/2-\delta}$ leads to the result.
\end{proof}

Conclude from Proposition \ref{initio} and our choice of $J$ that $$\sup_{\nu \in D_{n,M}} |S_n(\nu)| \lesssim \frac{\sqrt n \|\nu-\nu_0\|_{L^2}}{2^{J} (\log n)^{1/2+\delta}} \lesssim \sqrt n n^{-(s+1/2)/(2s+1)} =O(1).$$ Simple calculations (using that (\ref{psid}) implies that $\tilde \psi_J, 2^{-J/2}\tilde \psi_J$ are uniformly bounded in $L^2, L^\infty$, respectively, proved by arguments similar to those used in Lemma \ref{approxi}) show that for $s>5/2$ the three conditions~\eqref{deltacon}, \eqref{etaL2}, \eqref{larges} and the two conditions (\ref{intc}), (\ref{deltac}) are all satisfied for such $\eta, \delta_n$ chosen as in~\eqref{etadelta} and $K$ large enough. We thus deduce from Proposition~\ref{prop:functbvm} and Lemma~\ref{postupbd} that for some sequence $C_n=O_{\mathbb P^{\mathbb N}_{\nu_0}}(1)$ and $|t|\le T$, 
\begin{align*}
&E^{\Pi^{D_{n,M}}}\left[e^{t \sqrt n \int(v-v_0) c_{\ell J}\psi_{\ell m}} |X_1, \dots, X_n \right] \\
&\qquad\qquad\le  C_n \exp \Big\{\frac{t^2}{2}\|{\tilde \psi_J}\|^2_{LAN} - \frac{t}{\sqrt n} \sum_{k=1}^n A_{\nu_0}(\tilde \psi_J)(X_k)  \Big\}.
\end{align*}
If we define $\tilde \nu_{\ell m} = -\frac{1}{n} \sum_{k=1}^n A_{\nu_0}(\tilde \psi_J)(X_k) + c_{\ell J}\int v_0 \psi_{\ell m}$ then for $|t|\le T$ this becomes the sub-Gaussian estimate
\begin{equation} \label{subgcon}
E^{\Pi^{D_{n,M}}}\left[e^{t \sqrt n \left(c_{\ell J}\int v \psi_{\ell m} - \tilde \nu_{\ell m}\right)} |X_1, \dots, X_n \right] \le C_n \exp \Big\{\frac{t^2}{2}\|\tilde \psi_J\|_{LAN}^2 \Big\}
\end{equation}
for the stochastic process $Z_{\ell,m}=(c_{\ell J}\int v \psi_{\ell m} - \tilde \nu_{\ell m}) |X_1, \dots, X_n$ conditional on $X_1, \dots, X_n$, with constants $\eta,t$ uniform.
We can then decompose $$\sqrt n c_{\ell J} \left|\langle v-v_0, \psi_{\ell m} \rangle\right| \le \sqrt n |Z_{\ell,m}| + \bigg| \frac{1}{\sqrt n} \sum_{k=1}^n A_{\nu_0}((\tilde \psi_{\ell m})_J)(X_k)\bigg|,$$ and the maximum over $2^J$ many variables in (\ref{wavred}) can now be estimated by the sum of the maxima of each of the preceding processes. For the first process we observe that the sub-Gaussian constants are uniformly bounded through 
\begin{equation} \label{subba}
\|\tilde \psi_J\|_{LAN}^2= \|A_{\nu_0}(\tilde \psi_J)\|_{L^2(\mathbb P_{\nu_0})}^2 \lesssim \|\tilde \psi_J\|_{L^2(I)}^2 \le \|\tilde \psi\|_{L^2(I)} \lesssim \|\psi_{\ell m}\|_{L^2(I)}^2 \lesssim 1,
\end{equation} 
using Lemma~\ref{sup}, that $\nu_0 \in L^\infty$ is bounded away from zero, that $P_{V_J}$ is a $L^2$-projector, combined with standard convolution inequalities. Using the sub-Gaussian estimate for $|t| \le T$, the display in the proof of Lemma 2.3.4 in \cite{GineNickl2016} yields that this maximum has expectation of order at most $O(J)$ with $\mathbb P_{\nu_0}^\mathbb N$-probability as close to one as desired. To the maximum of the second (empirical) process we apply Lemma 3.5.12 in \cite{GineNickl2016} (and again Lemma~\ref{sup} combined with the inequality in the previous display and also that $\|g\|_{\infty} \lesssim 2^{J/2}\|g\|_{L^2}$ for any $g \in V_J$) to see that its $\mathbb P^\mathbb N_{\nu_0}$-expectation is of order $O(\sqrt J  + J2^{J/2}/\sqrt n)=O(\sqrt J)$ uniformly in $\ell \le J, m$. Feeding these bounds into (\ref{wavred}) we see that on an event of $\mathbb P_{\nu_0}^\mathbb N$-probability as close to one as desired, 
\begin{equation}\label{ebdp}
E^{\Pi^{D_{n,M}}}[\|\nu - \nu_0\|_\infty|X_1, \dots, X_n] \lesssim \frac{2^{J/2} J}{\sqrt n} (\log n)^{1/2+\delta}  J \lesssim \frac{2^{J/2}}{\sqrt n}(\log n)^{5/2+\delta},
\end{equation}
Since $\delta>1/2$ was arbitrary an application of Markov's inequality completes the proof.

\subsection{Proof of Theorem \ref{bvm1}}\label{sec:bvm1}

Given results from Sections \ref{asysec}, \ref{subsec}, the proof follows ideas in \cite{CN14}. Let $\hat \nu (J)$ be the random element of $\mathcal M_0(w)$ from (\ref{cent}) with $J$ chosen as in \eqref{jprior}. For $D_{n,M}$ as in (\ref{dn}) let $\Pi^{D_{n,M}}(\cdot|X_n, \dots, X_n)$ be as before (\ref{tvlim}), and suppose $\nu \sim\Pi^{D_{n,M}}(\cdot|X_1,\dots,X_n)$. In view of (\ref{tvlim}), and since the total variation distance dominates the metric $\beta_{\mathcal M_0(\omega)}$, it suffices to prove the result for $\Pi^{D_{n,M}}(\cdot|X_1, \dots, X_n)$ replacing $\Pi(\cdot|X_1, \dots, X_n)$.  Let~$\tilde\Pi_n$ denote the laws of $\sqrt n (\nu-\hat\nu(J))$ conditionally on $X_1,\dots,X_n$ and let $\mathcal N_{\nu_0}$ be the Gaussian probability measure on $\mathcal M_0(w)$ defined (cylindrically) before Theorem \ref{bvm1}, arising from the law of $\mathbb X =(\mathbb X_{l,k})$.  The following norm estimate is the main step to establish tightness of the process $Z$ in $\mathcal M_0(\omega)$.
\begin{lemma}
For any monotone increasing sequence $\bar w=(\bar w_l)$, $\bar
w_l/l^4 \ge 1$, if $Z$ equals either $\mathbb X$ or the process $\sqrt n (\nu-\hat \nu(J))|X_1, \dots, X_n$, then for some fixed constant $C>0$ we have
\begin{eqnarray}\label{inexp}
E \bigl[\|Z\|_{\mathcal M_0(\bar w)}\bigr] &=& E \Bigl[\sup_l \bar w_l^{-1} \max_{k}|Z_{l,k}| \Bigr] \leq C,
\end{eqnarray}
where in case $Z=\sqrt n (\nu-\hat \nu(J))|X_1, \dots, X_n$ the operator $E$ denotes conditional expectation $E^{D_{n,M}}[\cdot|X_1, \dots, X_n]$ and the inequality holds with $\mathbb P^\mathbb N_{\nu_0}$-probability as close to one as desired. 
\end{lemma}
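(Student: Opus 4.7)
The plan is to reduce both cases to a sub-Gaussian maximal inequality at each resolution level $l$, then supremise across $l$ using that $\bar w_l\ge l^4$ grows faster than the $\sqrt{l}$-rate of maxima of the $\le 2^l$ coordinates at scale $l$. The ingredient common to both cases is the uniform variance bound $\|(A_{\nu_0}^*)^{-1}[\psi_{lk}1_{\{0\}^c}]\|_{L^2(\PP_{\nu_0})}^2\lesssim 1$ in $l,k$, which follows exactly as in~(\ref{subba}): by Lemma~\ref{scoinv} we have $A_{\nu_0}(\tilde\psi_d)=-(A_{\nu_0}^*)^{-1}[\psi_{lk}1_{\{0\}^c}]$, and the convolution estimates used in the proof of Lemma~\ref{approxi} control $\|\tilde\psi\|_{L^2}$ by $\|\psi_{lk}\|_{L^2}\lesssim 1$. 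Note also that $Z_{l,k}=0$ for $l\ge J$, since both $\nu$ and $\hat\nu(J)$ are supported on $V_J$, so the supremum effectively runs over $l<J$.

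For $Z=\mathbb X$ the covariance specification of $\mathcal N_{\nu_0}$ makes each $\mathbb X_{l,k}$ centred Gaussian with $O(1)$ variance; the standard Gaussian maximal inequality gives $E[\max_k|\mathbb X_{l,k}|]\lesssim \sqrt{l}$, and Gaussian concentration gives $\PP(\max_k|\mathbb X_{l,k}|>C\sqrt{l}+t)\le e^{-ct^2}$. Writing $X_l=\bar w_l^{-1}\max_k|\mathbb X_{l,k}|$, we obtain $E[X_l]\lesssim l^{-7/2}$ together with a sub-Gaussian tail for $X_l-E[X_l]$ of proxy $\lesssim l^{-8}$. Integrating the union bound $\PP(\sup_l X_l>u)\le \sum_l e^{-cu^2\bar w_l^2}$ over $u\ge 0$ yields $E[\sup_l X_l]\le C$, and the same estimate forces $X_l\to 0$ almost surely, so $\mathbb X$ takes values in the separable subspace $\mathcal M_0(\bar w)$.

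For the posterior case the scheme is parallel but applies Proposition~\ref{prop:functbvm} and Lemma~\ref{postupbd} to the functional $F(\nu)=\int\psi_{lk}(\nu-\nu_0)$, with perturbation direction $\eta=\tilde\psi_J$ built from $\psi=\psi_{lk}1_{\{0\}^c}$ via~(\ref{psid})--(\ref{psioh}), and with $\delta_n=K2^J(J^2+1)/\sqrt n$ as in~(\ref{etadelta}). Exactly as in the derivation of~(\ref{subgcon}), Lemma~\ref{approxi} (now applied without the rescaling factor $c_{\ell J}$) together with the Taylor remainder $\nu-\nu_0=\nu_0(v-v_0)+O((v-v_0)^2)$ of order $\|v-v_0\|_\infty\|v-v_0\|_{L^2}=o(n^{-1/2})$, valid for $s>5/2$ on $D_{n,M}$, shows that $\sup_{\nu\in D_{n,M}}|S_n(\nu)|=O(1)$ uniformly in $l,k$. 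Cancelling the linear centring appearing in~(\ref{cent}) against the $\frac{t}{\sqrt n}\sum_iA_{\nu_0}(\eta)(X_i)$-term supplied by Proposition~\ref{prop:functbvm}, Lemma~\ref{postupbd} then delivers, on events of $\PP_{\nu_0}^\mathbb N$-probability as close to one as desired, a conditional sub-Gaussian bound $E^{\Pi^{D_{n,M}}}[e^{tZ_{l,k}}\mid X_1,\dots,X_n]\le C_n\exp(Ct^2/2)$ with $C_n=O_{\PP_{\nu_0}^\mathbb N}(1)$ and variance proxy $C$ uniform in $l,k$. Repeating the scale-by-scale union bound of the Gaussian case conditionally on the data concludes the proof.

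The main obstacle is to ensure uniformity in $(l,k,J)$ of all the sub-Gaussian constants and approximation errors delivered by Proposition~\ref{prop:functbvm}, so that a single good event of high $\PP_{\nu_0}^\mathbb N$-probability supports the scale-by-scale union bound. This uniformity traces back to the bounds $\|\tilde\psi\|_{L^2}\lesssim 1$ and $\|\tilde\psi_J\|_\infty\lesssim 2^{J/2}$ established in the proof of Theorem~\ref{suprat}, together with the routine verification that conditions~(\ref{intc}), (\ref{deltac}), (\ref{deltacon}), (\ref{etaL2}) and (\ref{larges}) all remain valid uniformly in $l,k$ for the chosen $(\eta,\delta_n)$ when $s>5/2$.
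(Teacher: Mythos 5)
There is a genuine gap at the very first step. You assert that $Z_{l,k}=0$ for $l\ge J$ ``since both $\nu$ and $\hat\nu(J)$ are supported on $V_J$''. This is false: the prior places $v=\log\nu$ in $V_{B,J}$, but $\nu=e^{v}$ is \emph{not} an element of $V_J$ and has nonzero wavelet coefficients $\langle\nu,\psi_{lk}\rangle$ at all resolution levels, while $\hat\nu(J)_{l,k}$ vanishes for $l\ge J$ by convention. The high-frequency part of the process therefore does not vanish and must be controlled. The paper does this by decomposing $\sqrt n(\nu-\hat\nu(J))=\sqrt n(\nu_J-\hat\nu(J))+\sqrt n(\nu_0-\nu_{0,J})+\sqrt n[(\nu-\nu_0)-(\nu-\nu_0)_J]$, bounding the deterministic bias term via (\ref{wavprop}) and the third term in posterior expectation via the supremum-norm contraction bound (\ref{ebdp}) from the proof of Theorem~\ref{suprat}. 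This ingredient is entirely absent from your argument, and without it the supremum over all $l$ in $\|Z\|_{\mathcal M_0(\bar w)}$ is not controlled.

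A second gap concerns your choice $F(\nu)=\int\psi_{lk}(\nu-\nu_0)$ and the claim that $\sup_{\nu\in D_{n,M}}|S_n(\nu)|=O(1)$. With this $F$, the term $S_n$ contains $t\sqrt n$ times the approximation error of Lemma~\ref{approxii}, which is only bounded by $\|\nu-\nu_0\|_\infty 2^{-J}$; uniformly over $D_{n,M}$ this gives $\sqrt n\,\eps_n^{L^\infty}2^{-J}\approx n^{1/(2(2s+1))}$ up to logarithmic factors, which diverges. The paper avoids this by applying Proposition~\ref{prop:functbvm} to the already linearised functional $F_{lk}(\nu)=-\int A_{\nu_0}(v-v_0)A_{\nu_0}(\tilde\psi_J)\,d\mathbb P_{\nu_0}$ (so that $S_n\equiv0$) and controlling the discrepancy $\int\psi_{lk}(\nu-\nu_0)-F_{lk}(\nu)$ separately \emph{in posterior expectation}, where the sharper bound (\ref{ebdp}) makes it $o(1/\sqrt n)$; a uniform-over-$D_{n,M}$ bound is not available. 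Relatedly, the ``cancellation'' of the centring in (\ref{cent}) against $\frac{t}{\sqrt n}\sum_iA_{\nu_0}(\eta)(X_i)$ is not exact, since $\eta=\tilde\psi_J\in V_J$ whereas the centring uses $(A_{\nu_0}^*)^{-1}[\psi_{lk}1_{\{0\}^c}]$; the residual empirical processes $V_{lk}$ and $W_{lk}$ must be handled (the paper uses Bernstein's inequality together with envelope bounds of order $2^{J/2}$ to show $E_{\nu_0}\sup_{l<J}w_l^{-1}\max_k|V_{lk}|\lesssim 1/\sqrt n$ and likewise for $W_{lk}$). Your union-bound strategy for the sub-Gaussian coordinates at each scale is sound and matches the paper, but the three omissions above are not cosmetic.
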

\begin{proof}
We first consider the more difficult case where $Z$ is the centred and scaled posterior process. We decompose, with $\nu_J=P_{V_J}(\nu)$, 
$$\sqrt n (\nu- \hat \nu (J)) = \sqrt n (\nu_J - \hat \nu (J)) + \sqrt n (\nu_0-\nu_{0,J}) + \sqrt n [(\nu-\nu_0) - (\nu-\nu_0)_J] .$$
The second term on the right hand side  has multi-scale norm $\|\nu_0-\nu_{0,J}\|_{\mathcal M(w)}$ bounded by $2^{-J(s+1/2)} w_J^{-1} = o(1/\sqrt n)$ in view of (\ref{wavprop}), $\|\psi_{lk}\|_{L^1}\lesssim 2^{-l/2}$. Similarly the expectation of the multi-scale norm of the third term is bounded by
\begin{align*}
& \int\|\nu-\nu_0-(\nu-\nu_0)_J\|_{\mathcal M(w)} d\Pi^{D_{n,M}}(\nu|X_1, \dots, X_n) \\
&= \int \sup_{l>J} w_l^{-1} \max_k |\langle \nu-\nu_0, \psi_{lk} \rangle| d\Pi^{D_{n,M}}(\nu|X_1, \dots, X_n) \\
& \le w_{J}^{-1} \sup_{l>J}  \max_k \|\psi_{lk}\|_{L^1} \int \|\nu-\nu_0\|_\infty d\Pi^{D_{n,M}}(\nu|X_1, \dots, X_n)  \\
&\lesssim \frac{2^{-J/2} 2^{J/2} }{J^4 \sqrt n}\log ^{5/2+\delta} n = o_{\mathbb P^\mathbb N_{\nu_0}}(1/\sqrt n),
\end{align*}
using (\ref{ebdp}). We turn to bounding the multi-scale norm of the first term, corresponding to
$$\sqrt n\|\nu_J - \hat \nu (J)\|_{\mathcal M(w)} = \sqrt n \sup_{l< J}w_{l}^{-1} \max_k \left|\int_I \nu \psi_{lk}  - \hat \nu(J)_{lk}\right|.$$
The first term in the decomposition 
\begin{equation}\label{auxdec}
\int_I  \nu \psi_{lk}  - \hat \nu(J)_{lk} = \int_I (\nu-\nu_0) \psi_{lk} - \Big(\hat \nu(J)_{lk} -\int_I \nu_0 \psi_{lk}\Big) \equiv  \int_I (\nu-\nu_0) \psi_{lk} - W_{lk} 
\end{equation}
equals
\begin{equation} \label{auxrem}
\int_I (\nu-\nu_0)\psi_{lk} = \int_I(e^v-e^{v_0})\psi_{lk} = \int_I (v-v_0) \nu_0 \psi_{lk} + O(\|\nu-\nu_0\|_{\infty}^2),
\end{equation}
and the quadratic remainder is of order $o(1/\sqrt n)$ uniformly in $k,l$ by definition of $D_{n,M}$ and since $s>5/2$. 

\begin{lemma} \label{approxii}
Let $\psi = \nu_0\psi_{lk}1_{I\setminus\{0\}}$ for some $l< J, k$ with corresponding $\tilde \psi=(\tilde \psi)_{lk}$ from (\ref{tpsi}), (\ref{psioh}) and wavelet approximation $\tilde \psi_J \in V_J$. We have $$\left|\int_I A_{\nu_0}(v-v_0) A_{\nu_0}(\tilde \psi_J)d\mathbb P_{\nu_0} + \int_I (v-v_0) \nu_0 \psi_{l k}\right| \lesssim  \|\nu-\nu_0\|_{\infty} 2^{-J}.$$
\end{lemma}
\begin{proof}
The proof requires only notational adaptation of the proof of Lemma~\ref{approxi}, except for the last display, where now we use Lemma~\ref{sup} (and its variant for $A_\nu^*$) in the estimate $|\langle h(\nu, \nu_0), \psi_{lk}\rangle| \leq \|h(\nu, \nu_0)\|_\infty \|\psi_{l k}\|_{L^1} \lesssim 2^{-l/2}\|\nu-\nu_0\|_\infty$ so that scaling by $c_{\ell J}$ is not necessary.
\end{proof}
The upper bound in the display of Lemma~\ref{approxii} has $E^{D_{n,M}}[\cdot|X_1, \dots, X_n]$-expectation of order $o(1/\sqrt n)$ in view of (\ref{ebdp}). We now apply Proposition \ref{prop:functbvm} to the functional 
\begin{equation}\label{eff}
F(\nu) \equiv F_{l k}(\nu)=-\int_I A_{\nu_0}(v-v_0) A_{\nu_0}(\tilde \psi_J)d\mathbb P_{\nu_0},
\end{equation}
with choices $\delta_n= K2^J(J^2+1)/\sqrt{n}$ for $K>0$ a large enough constant and $\eta=\tilde \psi_J$. Simple calculations (using that $\tilde \psi_J, 2^{-J/2}\tilde \psi_J$ are uniformly bounded in $L^2, L^\infty$, respectively) show that for $s>5/2$ the three conditions~\eqref{deltacon}, \eqref{etaL2}, \eqref{larges} and the two conditions~(\ref{intc}), (\ref{deltac}) are all satisfied. Conclude from Proposition~\ref{prop:functbvm} and Lemma \ref{postupbd}  that
$$E^{\Pi^{D_{n,M}}}\left[e^{t \sqrt n F(\nu)} |X_1, \dots, X_n \right] \le  C_n  \exp \bigg\{\frac{t^2}{2}\|\tilde \psi_J\|_{LAN}^2 - \frac{t}{\sqrt n} \sum_{k=1}^n A_{\nu_0}(\tilde \psi_J)(X_k)   \bigg\}$$
for $|t|\le T$, or equivalently, if $V_{lk} =\frac{1}{n} \sum_{k=1}^n A_{\nu_0}(\tilde \psi_J)(X_k)$, then for some $C_n'=O_{\mathbb P^{\mathbb N}_{\nu_0}}(1)$,
\begin{equation}\label{subbi}
E^{\Pi^{D_{n,M}}}\left[e^{t \sqrt n F(\nu) + t\sqrt n V_{lk}} |X_1, \dots, X_n \right] \le   C_n'  \exp \left\{\frac{t^2}{2}\|\tilde \psi_J\|_{LAN}^2\right\}.
\end{equation}
Arguing just as in (\ref{subba}) the sub-Gaussian constants $\|\tilde \psi_J\|_{LAN}^2$ are bounded by a fixed constant. We then have, for $M$ a fixed constant and using $w_l \ge  l$,
\begin{align*}&E^{\Pi^{D_{n,M}}}\left[\left. \sup_{l<J}w_{l}^{-1} \max_k \left|\sqrt n F_{lk}(\nu) + \sqrt n V_{lk}\right|\right|X_1, \dots, X_n\right] \\
&\le M+\int_M^\infty \Pi^{D_{n,M}} \left(\left. \sup_{l<J}l^{-1} \max_k \left|\sqrt n F_{lk}(\nu) + \sqrt n V_{lk}\right|>u\right|X_1, \dots, X_n\right)du\end{align*}
We bound the tail integrals using \eqref{subbi} as follows:
\begin{align*}
&\sum_{l<J,k}\int_M^\infty \Pi^{D_{n,M}} \left(  \left|\sqrt n F_{lk}(\nu) + \sqrt n V_{lk}\right|>lu|X_1, \dots, X_n\right)du\\
&\le\sum_{l<J,k}\int_M^\infty \Pi^{D_{n,M}} \left(  e^{T|\sqrt n F_{lk}(\nu) + \sqrt n V_{lk}|}>e^{Tlu}|X_1, \dots, X_n\right)du\\
&\le\sum_{l<J,k}\int_M^\infty E^{\Pi^{D_{n,M}}} \left[  e^{T|\sqrt n F_{lk}(\nu) + \sqrt n V_{lk}|}|X_1, \dots, X_n\right]e^{-Tlu}du\\
&\lesssim C_n'\sum_{l<J} 2^l\int_M^\infty e^{-Tlu}du\lesssim C_n'\sum_{l<J}2^l e^{-TMl}
= O_{\mathbb P_{\nu_0}^\mathbb N}(1)\end{align*}
for $M$ large enough. Moreover, one proves $E_{\nu_0} \sup_{l<J} w_l^{-1} \max_k |V_{lk}| \lesssim 1/\sqrt n$ and also $E_{\nu_0} \sup_{l<J} w_l^{-1} \max_k |W_{lk}|\lesssim 1/\sqrt n$ just as in the proof of Theorem 1 in \cite{CN14} (or Theorem 5.2.16 in \cite{GineNickl2016}), using Bernstein's inequality combined with the previous bound on the sub-Gaussian constants and a uniform bound of order $2^{J/2}$ (proved just as after (\ref{subba})) on the envelopes $\|A_{\nu_0}(\tilde \psi_J)\|_\infty$, $\|(A^*_{\nu_0})^{-1}(\psi_{lk}1_{\{0\}^c})\|_\infty$, $l \le J,$ of the empirical processes involved. Combining what precedes with Lemma \ref{approxii} (and the remark after it), (\ref{auxdec}), (\ref{auxrem}) proves (\ref{inexp}) for the `posterior' process. The Gaussian process  $\mathbb X$ admits by definition the same (sub-) Gaussian bound as in (\ref{subbi}) so that the result follows from the same arguments just given.
\end{proof}
The inequality (\ref{inexp}) implies in particular that for any weighting sequence $\omega$ as in Theorem~\ref{bvm1}, the processes~$Z$ concentrate in the separable subspace $\mathcal M_0(\omega)$ of $\mathcal M(\omega)$, and their laws define tight (in the case of $\mathcal N_{\nu_0}$, Gaussian) Borel probability measures in it (by Ulam's theorem, see p.225 in \cite{D02}). Then, using the estimate (\ref{inexp}) and arguing as in the proof of Proposition 6 in \cite{CN14} (or in Theorem 7.3.20 in \cite{GineNickl2016}), Theorem \ref{bvm1} will follow if we can establish convergence of the finite-dimensional distributions $\tilde \Pi_n \circ P_{V_L}^{-1}$ towards those of $\mathcal N_{\nu_0} \circ P_{V_L}^{-1}$, $L \in \mathbb N$ fixed, as $n \to \infty$, where  $P_{V_L}$ is
the projection operator onto the finite-dimensional subspace $V_L$ of $\mathcal M_0(w)$ corresponding to the first $2^L$ coordinates $(x_{lk}:l \le L , k)$. For this we proceed as in the previous lemma, combining  (\ref{auxdec}), (\ref{auxrem}) with Lemma \ref{approxii} and the definition of $W_{lk}$, to reduce the problem to showing for $\nu \sim \Pi^{D_{n,M}}(\cdot|X_1, \dots, X_n)$ weak convergence in probability of the conditional laws of $$Y_n \equiv -\sqrt n \int_I A_{\nu_0}(v-v_0) A_{\nu_0}(\tilde \psi_J)d\mathbb P_{\nu_0} - \frac{1}{\sqrt n} \sum_{i=1}^n (A^*_{\nu_0})^{-1}(\psi_{lk}1_{\{0\}^c})(X_i),$$ to the law of $\mathcal N_{\nu_0}$ for every \textit{fixed} $k, l \le L \in \mathbb N$. Applying Proposition \ref{prop:functbvm} as after (\ref{eff}) combined with Lemma \ref{postconv} (for $k,l$ fixed the corresponding $\tilde \psi_J$'s are bounded in $L^\infty$) gives convergence of $Z_n$ in Proposition \ref{prop:functbvm} to one and hence one has, as $n \to \infty$ and for all $t$,
$$E^{\Pi^{D_{n,M}}}\left[e^{t Y_n} |X_1, \dots, X_n \right] = (1+o_{\mathbb P^{\mathbb N}_{\nu_0}}(1))  \exp \left\{\frac{t^2}{2}\|A_{\nu_0}(\tilde \psi_J)\|_{L^2(\mathbb P_{\nu_0})}^2\right\} \exp (t\rho_n)$$
where $$\rho_n =  -\frac{1}{\sqrt n} \sum_{i=1}^n (A^*_{\nu_0})^{-1}(\psi_{lk}1_{\{0\}^c})(X_i) -  \frac{1}{\sqrt n} \sum_{i=1}^n A_{\nu_0}(\tilde \psi_J)(X_i).$$
Using Lemma \ref{scoinv}, (\ref{tpsi}), $A_{\nu}(\tilde \psi_d - \tilde \psi)=0$ by (\ref{akernel}) and (\ref{psioh}), and then also Lemma~\ref{sup} combined with 
$\tilde \psi \in L^2$ one has 
\begin{align*}
\|A_{\nu_0}(\tilde \psi_J) + (A^*_{\nu_0})^{-1}(\psi_{lk}1_{\{0\}^c})\|_{L^2(\mathbb P_{\nu_0})}&=\|A_{\nu_0}(\tilde \psi_J) - A_{\nu_0}(\tilde \psi)\|_{L^2(\mathbb P_{\nu_0})} \\
&\lesssim \|\tilde \psi_J- \tilde \psi\|_{L^2(I)} \to 0
\end{align*}
as $J \to \infty$, in particular by Chebyshev's inequality $\rho_n=o_{\mathbb P^{\mathbb N}_{\nu_0}}(1)$ for every fixed $l \le L, k$. Thus the Laplace-transforms of each such coordinate projection  converge to the Laplace transform of the correct normal limit distribution, for all $t$,
$$E^{\Pi^{D_{n,M}}}\left[e^{t Y_n} |X_1, \dots, X_n \right] =  (1+o_{\mathbb P^{\mathbb N}_{\nu_0}}(1)) \times \exp \left\{\frac{t^2}{2}\|(A^*_{\nu_0})^{-1}(\psi_{lk}1_{\{0\}^c})\|_{L^2(\mathbb P_{\nu_0})}^2\right\},$$ and convergence in distribution now follows from standard arguments (see, e.g., Proposition 29 in \cite{N17}). This argument extends directly to all linear combinations $\sum_{l \le L, k} a_{l,k}\psi_{l k}$, so that we can apply the Cramer--Wold device to obtain joint convergence in $V_L$ for any $L \in \mathbb N$. The proof is complete.

\section{Proof of Proposition \ref{initio}} \label{prelimc}

We first derive a general contraction theorem from which we will deduce Proposition \ref{initio} (after Proposition \ref{propball}). We follow the usual `testing and small ball probability approach' (as in Theorem 7.3.1 in \cite{GineNickl2016}, see also \cite{GvdV17}), which in our setting gives the following starting point to prove contraction rates, where $K(\mathbb P_\nu, \mathbb P_{\nu'})$ denotes the usual Kullback--Leibler (KL-) divergence between two probability measures $\mathbb P_\nu, \mathbb P_{\nu'}$.

\begin{proposition} \label{cont0}
Consider a prior $\Pi$ on a $\sigma$-field $\mathcal S_V$ of some set $\mathcal V$ of L\'evy measures for which the map $(\nu,x) \mapsto p_\nu(x),$ defined before (\ref{likelihood}) is jointly measurable. Let $d$ be some metric on $\mathcal V$ such that $\nu \mapsto d(\nu,\nu')$ is measurable for all $\nu' \in \mathcal V$. Suppose for some sequence $\eps_n \to 0$ such that $\sqrt n \eps_n \to \infty$, constant $C>0$ and $n$ large enough we have
$$\Pi\left(\nu \in \mathcal V: K(\mathbb P_{\nu_0}, \mathbb P_\nu) \le \eps^2_n, \Var_{\mathbb P_{\nu_0}}\big(\log \frac{d\mathbb P_{\nu}}{d\mathbb P_{\nu_0}}\big) \le \eps^2_n\right) \ge e^{-Cn\eps_n^2}$$ and that for $\mathcal V_n \subset \mathcal V$ such that
$\Pi(\mathcal V \setminus \mathcal V_n) \le L e^{-(C+4)n \eps_n^2}$ we can find tests $\Psi_n=\Psi(X_1, \dots, X_n)$ and $\delta_n>0, M_0>0,$ such that 
$$\mathbb E_{\nu_0} \Psi_n  \to 0,~~~ \sup_{\nu \in \mathcal V_n,\, d(\nu, \nu_0) \ge M_0 \delta_n} \mathbb E_\nu(1-\Psi_n) \le L e^{-(C+4)n \eps_n^2}.$$ Then if $\Pi(\cdot|X_1, \dots, X_n)$ is the posterior distribution from (\ref{post}) we have, for every $M \ge M_0$, $$\Pi(\nu: d(\nu, \nu_0) \ge M \delta_n|X_1, \dots, X_n) \to 0$$ as $n \to \infty$ in $\mathbb P_{\nu_0}^\mathbb N$-probability.
\end{proposition}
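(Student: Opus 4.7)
The plan is to follow the standard ``testing plus small ball probability'' template as in Theorem~7.3.1 of \cite{GineNickl2016} (originating in Ghosal, Ghosh, and van der Vaart). Writing
$$\Pi\bigl(\{\nu: d(\nu,\nu_0) \ge M\delta_n\}\bigm|X_1,\ldots,X_n\bigr) = \frac{\int_{\{d(\nu,\nu_0) \ge M\delta_n\}} \frac{L_n(\nu)}{L_n(\nu_0)}\,d\Pi(\nu)}{\int_\mathcal V \frac{L_n(\nu)}{L_n(\nu_0)}\,d\Pi(\nu)} \;\equiv\; \frac{N_n}{D_n},$$
the argument is to lower bound $D_n$ and upper bound $N_n$ separately on a high-probability event, then combine.

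For the denominator, I would invoke the usual evidence lower bound (e.g.\ Lemma~7.3.2 in \cite{GineNickl2016}): the KL and variance assumptions, together with Chebyshev applied to the log-likelihood ratios on the small ball, and the small-ball bound $\Pi(B_n)\ge e^{-Cn\eps_n^2}$, yield an event $A_n$ with $\mathbb P_{\nu_0}^\mathbb N(A_n)\ge 1-(n\eps_n^2)^{-1}\to 1$ on which
$$D_n \ge e^{-(C+2)n\eps_n^2}.$$
For the numerator, split via the tests and sieve as $\mathbb E_{\nu_0}[(1-\Psi_n)N_n] \le \mathbb E_{\nu_0}[\int_{\mathcal V\setminus\mathcal V_n} \tfrac{L_n(\nu)}{L_n(\nu_0)}d\Pi(\nu)] + \mathbb E_{\nu_0}[(1-\Psi_n)\int_{\mathcal V_n\cap\{d\ge M\delta_n\}} \tfrac{L_n(\nu)}{L_n(\nu_0)}d\Pi(\nu)]$. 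Using $\mathbb E_{\nu_0}[L_n(\nu)/L_n(\nu_0)]=1$ and Fubini for the first, and Fubini with the uniform type II error bound $\mathbb E_\nu(1-\Psi_n)\le L e^{-(C+4)n\eps_n^2}$ for the second (valid for $M\ge M_0$), both terms are bounded by $L e^{-(C+4)n\eps_n^2}$. Markov's inequality then gives $(1-\Psi_n)N_n = O_{\mathbb P_{\nu_0}^\mathbb N}(e^{-(C+3)n\eps_n^2})$.

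Combining the two bounds on the event $A_n$ yields
$$(1-\Psi_n)\,\Pi(\{d(\nu,\nu_0)\ge M\delta_n\}\mid X_1,\ldots,X_n) \;\le\; \frac{(1-\Psi_n)N_n}{D_n} \;=\; O_{\mathbb P_{\nu_0}^\mathbb N}\bigl(e^{-n\eps_n^2}\bigr) \to 0,$$
while the complementary piece is controlled by $\Psi_n$, whose $\mathbb P_{\nu_0}^\mathbb N$-expectation tends to zero by hypothesis, so $\Psi_n\to 0$ in $\mathbb P_{\nu_0}^\mathbb N$-probability. This delivers the desired convergence. The measurability hypothesis on $(\nu,x)\mapsto p_\nu(x)$ and on $\nu\mapsto d(\nu,\nu')$ ensures that all sets appearing above lie in $\mathcal S_\mathcal V$ and that the Fubini steps are justified.

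I do not expect a substantive obstacle in this proof: every step is a direct transcription of well-established Bayesian contraction machinery, and the constants in the hypotheses ($C$ in the small ball, $C+4$ in the sieve and testing) are calibrated exactly to make the final ratio $e^{(C+2-C-3)n\eps_n^2}=e^{-n\eps_n^2}$ vanish. The only place where care is needed is to verify the hypothesis of the evidence lower bound from just the variance (rather than the full centered second moment of the log-likelihood ratio), which is standard since $\mathrm{Var}_{\mathbb P_{\nu_0}}(\log d\mathbb P_\nu/d\mathbb P_{\nu_0})$ together with $K(\mathbb P_{\nu_0},\mathbb P_\nu)\le\eps_n^2$ controls the centered second moment up to $\eps_n^4$. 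The real work -- constructing the tests $\Psi_n$ with the stated exponential error bounds in the decompounding model -- is reserved for the subsequent subsection of Section~\ref{prelimc} and is independent of the present general contraction proposition.
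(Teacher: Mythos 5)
Your proposal is correct and is exactly the standard testing-plus-small-ball argument that the paper itself invokes for Proposition \ref{cont0} by citing Theorem 7.3.1 of \cite{GineNickl2016} without writing out the details: evidence lower bound $D_n \ge e^{-(C+2)n\eps_n^2}$ on a high-probability event via Chebyshev on the KL/variance ball, Fubini plus the sieve and type-II error bounds for the numerator, and Markov to combine, with the constants calibrated so the ratio is $e^{-n\eps_n^2}$. No gaps; the calibration of $C$ versus $C+4$ and the handling of the $\Psi_n$ piece are all as intended.
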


As in previously studied `inverse problems' settings \cite{R13, NS17, N17}, to apply this proposition with a metric~$d$ different from the Hellinger distance $h(\mathbb P_\nu, \mathbb P_{\nu_0})$ requires new approaches to the construction of frequentist tests, and as in these references we use tools from `concentration of measure' theory put forward in \cite{GN11}, where we initially choose for~$d$ the weak (or `robust') metric induced by the norm $\|\cdot\|_{\mathbb H(\delta)}$ of
\begin{equation} \label{hdelta}
\mathbb H(\delta) = \bigg\{f: \|f\|^2_{\mathbb H(\delta)}=\sum_{l,k} 2^{-l} l^{-2\delta} \langle f, \psi_{lk}\rangle^2<\infty\bigg\}, \quad \delta>1/2,
\end{equation}
a negative order Sobolev space. Contraction rates in stronger norms will then be deduced from interpolation arguments.  Before doing so, however, we need to calculate KL-divergences for the observation scheme relevant in our context, and show that they can be bounded in terms of the distance of their L\'evy measures.

\begin{lemma}\label{L2bound}
Let $D>0$ such that $e^{-D}\le { d \nu}/{ d\Lambda}\le e^D$ and $e^{-D}\le { d\nu_0}/{ d \Lambda}\le e^D$ on~$I$. Then there exists $K_D>0$ such that
\begin{align*}
&K(\mathbb P_{\nu_0}, \mathbb P_\nu) = \int_I \log \frac{d\mathbb P_{\nu_0}}{d\mathbb P_\nu} d\mathbb P_{\nu_0} \le K_D \|\nu-\nu_0\|_{L^2}^2,\\
&\Var_{\mathbb P_{\nu_0}}\Big(\log \frac{d\mathbb P_{\nu}}{d\mathbb P_{\nu_0}}\Big) \le \int_I \Big(\log \frac{d\mathbb P_{\nu}}{d\mathbb P_{\nu_0}}\Big)^2 d\mathbb P_{\nu_0} \le K_D \|\nu-\nu_0\|_{L^2}^2.
\end{align*}
\end{lemma}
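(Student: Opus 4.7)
The plan is to reduce both bounds to a single $L^2$-estimate
$\|p_\nu - p_{\nu_0}\|_{L^2(\mathbb P_\Lambda)}^2 \lesssim_{D,\Delta}\|\nu-\nu_0\|_{L^2}^2$, where $p_\nu = d\mathbb P_\nu/d\mathbb P_\Lambda$. First I would use the convolution series~\eqref{convsum} together with the hypothesis $e^{-D}\le \nu,\nu_0 \le e^D$ to show that $p_\nu$ and $p_{\nu_0}$ are bounded above and below by positive constants depending only on $D,\Delta$. At the atom $\{0\}$ the density equals $e^{-\Delta(\nu(I)-1)}$, trivially two-sided bounded. On the Lebesgue part $p_\nu = q_\nu/(1-e^{-\Delta})$ with $q_\nu = e^{-\Delta\nu(I)}\sum_{k\ge1}\Delta^k\nu^{\ast k}/k!$; the pointwise lower bound $\nu^{\ast k}(x)\ge e^{-kD}$ (valid on $I$ since $|I|=1$ and $\nu\ge e^{-D}$) gives $q_\nu(x) \ge \Delta e^{-\Delta e^D - D}$, while Young's inequality $\|\nu^{\ast k}\|_\infty \le \|\nu\|_\infty\|\nu\|_{L^1}^{k-1}\le e^{kD}$ yields $q_\nu(x)\le e^{\Delta(e^D - e^{-D})}$.

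Given these uniform bounds, the ratio $p_\nu/p_{\nu_0}$ takes values in a fixed compact subinterval of $(0,\infty)$, so the elementary inequalities $|\log x|\le C_D|x-1|$ and $|\log x|^2 \le C_D|x-1|^2$ on such an interval, combined with the lower bound on $p_{\nu_0}$, yield
\[
K(\mathbb P_{\nu_0}, \mathbb P_\nu) + \int_I\Bigl(\log\tfrac{d\mathbb P_\nu}{d\mathbb P_{\nu_0}}\Bigr)^{\!2} d\mathbb P_{\nu_0} \;\le\; C_D \int_I(p_\nu - p_{\nu_0})^2 \,d\mathbb P_\Lambda.
\]
(Alternatively for the KL-term one could majorise by the $\chi^2$-divergence.) Thus everything reduces to bounding the right hand side.

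For this final step, the atom contributes $\lesssim |\nu(I)-\nu_0(I)|^2 \le \|\nu-\nu_0\|_{L^2}^2$ by Lipschitzness of $x\mapsto e^{-\Delta x}$ and Cauchy--Schwarz (using $|I|=1$). For the Lebesgue part, writing $a = e^{-\Delta\nu(I)}$, $a_0 = e^{-\Delta\nu_0(I)}$, I would decompose
\[
q_\nu - q_{\nu_0} \;=\; a\sum_{k\ge 1}\frac{\Delta^k}{k!}(\nu^{\ast k}-\nu_0^{\ast k}) \;+\; (a-a_0)\sum_{k\ge 1}\frac{\Delta^k \nu_0^{\ast k}}{k!},
\]
bound the second summand in $L^2$ by a constant times $|\nu(I)-\nu_0(I)|$ (using the $L^\infty$-bound on $q_{\nu_0}$), and for the first summand apply the telescoping identity $\nu^{\ast k}-\nu_0^{\ast k} = \sum_{j=0}^{k-1}\nu^{\ast j}\ast(\nu-\nu_0)\ast\nu_0^{\ast(k-1-j)}$ together with Young's inequality $\|f\ast g\|_{L^2}\le \|f\|_{L^1}\|g\|_{L^2}$. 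This gives $\|\nu^{\ast k}-\nu_0^{\ast k}\|_{L^2}\le k\,e^{(k-1)D}\|\nu-\nu_0\|_{L^2}$, and the series $\sum_k \Delta^k k e^{(k-1)D}/k! = \Delta e^{\Delta e^D}$ converges, closing the estimate. The main (mild) obstacle is essentially bookkeeping: keeping the discrete-continuous decomposition of $\mathbb P_\Lambda$ straight and verifying that every implicit constant depends only on $D,\Delta$ (and not, say, on further regularity of $\nu$). Beyond this, only Young's convolution inequality and elementary series manipulations are required.
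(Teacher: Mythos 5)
Your argument is correct, and all the individual steps check out: the two-sided bounds on $p_\nu$ follow from the convolution series exactly as you say (the lower bound $\nu^{\ast k}\ge e^{-kD}$ uses that the circle has total measure one), the reduction of both the KL and the second-moment term to $\int_I(p_\nu-p_{\nu_0})^2\,d\mathbb P_\Lambda$ works once you invoke the $\chi^2$ majorisation (or, equivalently, the second-order Taylor bound $|\log x-(x-1)|\le C_D|x-1|^2$ together with $\int(p_{\nu_0}-p_\nu)\,d\mathbb P_\Lambda=0$) for the KL term -- note that the crude first-order bound $|\log x|\le C_D|x-1|$ alone would only give a bound linear in $\|p_\nu-p_{\nu_0}\|_{L^1}$ for the KL divergence, so that parenthetical remark is actually load-bearing -- and the telescoping-plus-Young estimate for $\|\nu^{\ast k}-\nu_0^{\ast k}\|_{L^2}$ closes the argument with constants depending only on $D$ and $\Delta$. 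However, your route is genuinely different from the paper's. The paper proves the lemma by Taylor-expanding $s\mapsto\int\log(d\mathbb P_{\nu^{(s)}}/d\mathbb P_{\nu_0})\,d\mathbb P_{\nu_0}$ along the exponential path $\nu^{(s)}=\exp(s(v-v_0)+v_0)$, expressing the first and second derivatives through the score operator $A_{\nu^{(s)}}$ and its bilinear extension, killing the first-order term by centredness of the score, and bounding the second-order terms via Lemma~\ref{sup}; this reuses the information-operator machinery that the paper needs anyway for the LAN expansion, and it naturally produces bounds in terms of $\|v-v_0\|_{L^2}$ for $v=\log\nu$. Your approach is more elementary and self-contained -- it needs only Young's inequality and series bookkeeping, and avoids justifying differentiation under the integral along the path -- at the cost of not connecting to the score-operator calculus used throughout the rest of the paper. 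Either proof is acceptable for this lemma.
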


\begin{proof}
We define the path $s\mapsto\exp(s(v-v_0)+v_0)=\nu^{(s)}$, $s\in[0,1]$, from $\nu_0$ to $\nu$ and consider the function $f(s)= \int \log (d\mathbb P_{\nu^{(s)}}/d\mathbb P_{\nu_0}) d\mathbb P_{\nu_0}.$ Observing $f(0)=0$ a Taylor expansion at $s=0$ yields some $s\in[0,1]$ such that $f(1)=f'(0)+\tfrac{1}{2}f''(s)$. By the upper and lower bounds on the L\'evy densities the differentiation may be performed under the integral and we obtain
\begin{align*}
&\int \log \frac{d\mathbb P_{\nu_0}}{d\mathbb P_\nu} d\mathbb P_{\nu_0}
=-\int \frac{ d \frac{ d}{ d s}\PP_{\nu^{(s)}}}{ d \PP_{\nu^{(s)}}}\bigg|_{s=0}
+\frac{1}{2}\frac{ d \frac{ d^2}{ d s^2}\PP_{\nu^{(s)}}}{ d \PP_{\nu^{(s)}}}-\frac{1}{2}\Big(\frac{ d \frac{ d}{ d s}\PP_{\nu^{(s)}}}{ d \PP_{\nu^{(s)}}}\Big)^2 d\mathbb P_{\nu_0}\displaybreak[0]\\
&\qquad=- \int A_{\nu_0}(v-v_0) d\mathbb P_{\nu_0}\\
&\qquad\quad-\frac{1}{2} \int A_{\nu^{(s)}}((v-v_0)^2)+A_{\nu^{(s)}}(v-v_0,v-v_0)-(A_{\nu^{(s)}}(v-v_0))^2 d\mathbb P_{\nu_0}\displaybreak[0]\\
&\qquad=-\frac{1}{2} \int A_{\nu^{(s)}}((v-v_0)^2)+A_{\nu^{(s)}}(v-v_0,v-v_0)-(A_{\nu^{(s)}}(v-v_0))^2 d\mathbb P_{\nu_0}\displaybreak[0]\\
&\qquad\lesssim \| A_{\nu^{(s)}}((v-v_0)^2)\|_{L^1(\PP_{\nu^{(s)}})}+\|A_{\nu^{(s)}}(v-v_0,v-v_0)\|_{L^1(\PP_{\nu^{(s)}})}\\
&\qquad\quad+\|A_{\nu^{(s)}}(v-v_0)\|^2_{L^2(\PP_{\nu^{(s)}})},
\end{align*}
where the last step contains a change of measure from $\PP_{\nu_0}$ to $\PP_{\nu^{(s)}}$ such that we may now apply Lemma~\ref{sup}
\begin{align*}
 \int \log \frac{d\mathbb P_{\nu_0}}{d\mathbb P_\nu} d\mathbb P_{\nu_0}
&\lesssim \| (v-v_0)^2\|_{L^1({\nu^{(s)}})}+\|v-v_0\|^2_{L^1({\nu^{(s)}})}+\|v-v_0\|^2_{L^2({\nu^{(s)}})}\\
&\lesssim \|v-v_0\|^2_{L^2({\nu^{(s)}})}\lesssim \|v-v_0\|^2_{L^2}\lesssim \|\nu-\nu_0\|^2_{L^2}.
\end{align*}
For the second inequality we consider the folllowing function $g$ and its derivatives
\begin{align*}
g(s)&=\int \Big(\log \frac{d\mathbb P_{\nu^{(s)}}}{d\mathbb P_{\nu_0}}\Big)^2  d\mathbb P_{\nu_0},\\
g'(s)&=\int 2\Big(\log \frac{d\mathbb P_{\nu^{(s)}}}{d\mathbb P_{\nu_0}}\Big) \frac{d\tfrac{d}{ds}\mathbb P_{\nu^{(s)}}}{d\mathbb P_{\nu^{(s)}}}d\mathbb P_{\nu_0},\\
g''(s)&=\int 2\Big(\log \frac{d\mathbb P_{\nu^{(s)}}}{d\mathbb P_{\nu_0}}\Big) 
\bigg(\Big(\frac{d\tfrac{d}{ds}\mathbb P_{\nu^{(s)}}}{d\mathbb P_{\nu^{(s)}}}\Big)^2
+\frac{d\tfrac{d^2}{ds^2}\mathbb P_{\nu^{(s)}}}{d\mathbb P_{\nu^{(s)}}}
-\Big(\frac{d\tfrac{d}{ds}\mathbb P_{\nu^{(s)}}}{d\mathbb P_{\nu^{(s)}}}\Big)^2\bigg)
d\mathbb P_{\nu_0}\\
&=\int 2\Big(\log \frac{d\mathbb P_{\nu^{(s)}}}{d\mathbb P_{\nu_0}}\Big) 
\frac{d\tfrac{d^2}{ds^2}\mathbb P_{\nu^{(s)}}}{d\mathbb P_{\nu^{(s)}}}
d\mathbb P_{\nu_0}.
\end{align*}
Observing $g(0)=g'(0)=0$ we obtain by a Taylor expansion $g(1)=g''(s)$ for some $s\in[0,1]$ and thus
\begin{align*}
&\int \Big(\log \frac{d\mathbb P_{\nu}}{d\mathbb P_{\nu_0}}\Big)^2  d\mathbb P_{\nu_0}
=\int 2\Big(\log \frac{d\mathbb P_{\nu^{(s)}}}{d\mathbb P_{\nu_0}}\Big) \frac{d\tfrac{d^2}{ds^2}\mathbb P_{\nu^{(s)}}}{d\mathbb P_{\nu^{(s)}}}d\mathbb P_{\nu_0}
\lesssim \int \Big|\frac{d\tfrac{d^2}{ds^2}\mathbb P_{\nu^{(s)}}}{d\mathbb P_{\nu^{(s)}}}\Big|d\mathbb P_{\nu^{(s)}}\\
&\qquad\qquad\lesssim \| A_{\nu^{(s)}}((v-v_0)^2)\|_{L^1(\PP_{\nu^{(s)}})}+\|A_{\nu^{(s)}}(v-v_0,v-v_0)\|_{L^1(\PP_{\nu^{(s)}})}\\
&\qquad\qquad\lesssim \| (v-v_0)^2\|_{L^1({\nu^{(s)}})}+\|v-v_0\|^2_{L^1({\nu^{(s)}})}\\
&\qquad\qquad\lesssim \| v-v_0\|^2_{L^2}+\| v-v_0\|^2_{L^1} \lesssim \| v-v_0\|^2_{L^2} \lesssim \| \nu-\nu_0\|^2_{L^2}.
\end{align*}
\end{proof}

\begin{assumption}\label{id}
The intensity $\lambda$ of $\nu$ satisfies $\lambda<\pi/\Delta$.
\end{assumption}

For L\'evy processes on $\mathbb R$ the L\'evy measure can be identified by taking the complex logarithm of the  characteristic function of $\mathbb P_\nu$ in such a way that the resulting function is continuous. (This is known as the distinguished logarithm.) For L\'evy processes on a circle the characteristic function is defined only on the integer lattice and a continuous version of the logarithm cannot be defined. However, this problem can be resolved by assuming $\lambda<\pi/\Delta$ since then the exponent in the L\'evy-Khintchine representation always coincides with the principle branch of the logarithm of the characteristic function, ensuring identifiability. This condition is sharp as the following examples show.

\begin{examples}
By the L\'evy--Khintchine representation \eqref{LK} we see that $\PP_{\nu_1}$ and $\PP_{\nu_2}$ coincide if $\F \nu_1(k)$ equals $\F \nu_2(k)$ modulo multiples of $2\pi i/\Delta$ for all $k\in\Z$.
\begin{enumerate}
\item For $\nu_1=(\pi/\Delta)\delta_{1/4}$ and $\nu_2=(\pi/\Delta)\delta_{-1/4}$ we have $\F \nu_1(k)=\F\nu_2(k)$ for all even $k$ and $\F \nu_1(k)=\F\nu_2(k)+(2\pi/\Delta) i$ or $\F \nu_1(k)=\F\nu_2(k)-(2\pi/\Delta) i$ for all odd $k$. This shows that the intensity bound in Assumption~\ref{id} is sharp.
\item For $\nu_1(x)=(4\pi/\Delta)(\sin(2\pi x))_+$ and $\nu_2(x)=(4\pi/\Delta)(\sin(2\pi x))_-$ we have $\F \nu_1(1)=\F\nu_2(1)+(2\pi/\Delta) i$ and $\F \nu_1(-1)=\F\nu_2(-1)-(2\pi/\Delta) i$. For all other $k$ it can be shown that $\F \nu_1(k)=\F\nu_2(k)$. This demonstrates that there exist nonidentifiable L\'evy measures which are absolutely continuous with respect to Lebesgue measure.
\end{enumerate}
\end{examples}

\begin{lemma}\label{concentration}
For any $c, x, D>0, \delta>1/2$, and integer $K\ge2$, there exist constants $R_1(c,D,\Delta)>0$, $R_2(c,D,\Delta)>0$ and an estimator $\hat\nu=\hat \nu(X_1, \dots, X_n)$ such that
\begin{align}
\sup_{\nu: \|\nu\|_{L^1} < \pi/\Delta, \|\nu\|_{L^2} \le c} \mathbb P^\mathbb N_\nu \left(\|\hat \nu - \nu\|_{\mathbb H(\delta)} > R_1\bigg( \frac{\sqrt {\log K} + x}{\sqrt n} + \frac{1}{\sqrt K} \bigg) \right) \nonumber\\
\leq  e^{-Dx^2}+\frac{e^{-\frac{n R_2}{\log K}}}{R_2} .\label{freqrat}
\end{align}
\end{lemma}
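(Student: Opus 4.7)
The plan is to construct $\hat\nu$ from the empirical characteristic function, invert it via the distinguished logarithm, and truncate at Fourier level $K$; the stated error then decomposes as a variance coming from the empirical Fourier coefficients plus a bias from the truncation. Let $\hat\phi_n(m):=n^{-1}\sum_{j=1}^n e^{2\pi imX_j}$. Under Assumption~\ref{id} we have $|\phi_\nu(m)|=\exp(\Delta\operatorname{Re}(\mathcal F\nu(m)-\lambda))\ge e^{-2\Delta\lambda}\ge e^{-2\pi}=:c_0>0$ uniformly in $m\in\mathbb Z$ and in $\nu$ from the class, and the principal logarithm $\operatorname{Log}\phi_\nu(m)$ agrees with $\Delta(\mathcal F\nu(m)-\lambda)$ since $|\Delta\operatorname{Im}(\mathcal F\nu(m)-\lambda)|\le\Delta\lambda<\pi$. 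On the event $\Omega_n:=\{\sup_{|m|\le K}|\hat\phi_n(m)-\phi_\nu(m)|\le c_0/2\}$ I set $\hat\Psi(m):=\operatorname{Log}(\hat\phi_n(m))/\Delta$ and
\[
\hat{\mathcal F\nu}(m):=\hat\Psi(m)+\hat\lambda \text{ for }|m|\le K,\qquad \hat{\mathcal F\nu}(m):=0 \text{ for }|m|>K,
\]
where $\hat\lambda:=-\Delta^{-1}\log(n^{-1}\sum_{j=1}^n\1_{\{X_j=0\}})$ exploits the atom $\mathbb P_\nu(\{0\})=e^{-\Delta\lambda}$ visible in~\eqref{convsum}. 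On $\Omega_n^c$ I define $\hat\nu:=0$.

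On $\Omega_n$, Hoeffding's inequality applied to the unit-modulus variables $e^{2\pi imX_j}$ and a union bound over the $O(K)$ frequencies give $\sup_{|m|\le K}|\hat\phi_n(m)-\phi_\nu(m)|\le R(\sqrt{\log K}+x)/\sqrt n$ with probability $\ge 1-\tfrac12 e^{-Dx^2}$, and a separate Hoeffding bound on $\hat p_0:=n^{-1}\sum_j \1_{\{X_j=0\}}$, combined with Lipschitz continuity of $-\Delta^{-1}\log(\cdot)$ at the point $e^{-\Delta\lambda}\ge e^{-\pi}$, gives $|\hat\lambda-\lambda|\le R x/\sqrt n$ with probability $\ge 1-\tfrac12 e^{-Dx^2}$. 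On the disk of radius $c_0/2$ around $\phi_\nu(m)$ the principal logarithm is $2/c_0$-Lipschitz, so the same rate transfers to $|\hat\Psi(m)-\Psi(m)|$ and hence to $|\hat{\mathcal F\nu}(m)-\mathcal F\nu(m)|$ uniformly for $|m|\le K$. Summing these squared errors in the Fourier representation of the $\mathbb H(\delta)$-norm (equivalent to the wavelet one via a standard Littlewood--Paley argument) yields a variance contribution of order $(\sqrt{\log K}+x)/\sqrt n$; the bias $\|\nu-P_K\nu\|_{\mathbb H(\delta)}$ is controlled by $\|\nu\|_{L^2}\le c$ and the decay of the weights, namely $\sum_{l>\log_2 K}2^{-l}l^{-2\delta}\lesssim K^{-1}\log^{-2\delta} K$, yielding the $1/\sqrt K$ bound.

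Finally, on $\Omega_n^c$ the trivial estimator $\hat\nu=0$ gives error $\|\nu\|_{\mathbb H(\delta)}\le C\|\nu\|_{L^2}\le Cc$, so the only contribution to the tail in the claim comes through $\mathbb P^{\mathbb N}_\nu(\Omega_n^c)$: Hoeffding plus a union bound give $\mathbb P(\Omega_n^c)\le 4K\exp(-nc_0^2/8)$; for $\log K\le nc_0^2/16$ this is $\le\exp(-nc_0^2/16+\log 4)$, which is $\le\exp(-nR_2/\log K)/R_2$ for $R_2$ chosen small enough in terms of $c_0$, while for $\log K>nc_0^2/16$ the $1/\sqrt K$ bias already exceeds the entire target rate and the trivial estimator suffices. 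The main technical obstacle is guaranteeing that the distinguished logarithm is genuinely well defined and Lipschitz uniformly in $\nu$ over the class: this reduces to keeping $\hat\phi_n(m)$ in a fixed annulus around $\phi_\nu(m)$---the role of $\Omega_n$---and to verifying that the principal branch is the correct one, where the strict inequality $\lambda<\pi/\Delta$ in Assumption~\ref{id} is essential; a secondary bookkeeping point is the Fourier-to-wavelet transfer of the $\mathbb H(\delta)$-norm with the logarithmic weight $l^{-2\delta}$.
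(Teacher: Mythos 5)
Your construction follows the same architecture as the paper's proof: empirical characteristic function, principal logarithm, spectral cut-off at $K$, a bias--variance split in the Fourier form of the $\mathbb H(\delta)$-norm, and an exceptional event of exponentially small probability. Two implementation choices genuinely differ. First, the paper estimates the intensity by $\hat\lambda=-(1/K)\sum_{k=1}^K\Re\Phi_n(k)$, whose bias $K^{-1}\sum_{k\le K}\Re\mathcal F\nu(k)$ is controlled by Cauchy--Schwarz as $\|\nu\|_{L^2}/\sqrt K$; you instead read $\lambda$ off the atom $\mathbb P_\nu(\{0\})=e^{-\Delta\lambda}$ via $\hat p_0=n^{-1}\sum_j\1_{\{X_j=0\}}$. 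This is legitimate for the model at hand (the absolutely continuous part of $\mathbb P_\nu$ does not charge $\{0\}$) and removes the $1/\sqrt K$ bias from the intensity estimate, but you must cap $\hat\lambda$ on $\{\hat p_0<e^{-\pi}/2\}$ (or enlarge $\Omega_n$ by this event, which also has exponentially small probability) so that the estimator stays finite. Second, you use Hoeffding plus a union bound over the $O(K)$ frequencies where the paper uses bounded differences together with an expectation bound for the maximum; both deliver the $(\sqrt{\log K}+x)/\sqrt n$ deviation with the required $e^{-Dx^2}$ tail.

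Two points need repair. (a) The assertion that the principal logarithm is $2/c_0$-Lipschitz on the disc of radius $c_0/2$ around $\phi_\nu(m)$ fails when that disc meets the branch cut, and since $\arg\phi_\nu(m)=\Delta\Im\mathcal F\nu(m)$ can be arbitrarily close to $\pm\pi$ over the class $\{\nu:\|\nu\|_{L^1}<\pi/\Delta\}$, this cannot be excluded uniformly. The safer route, which the paper takes, is to write the error as $\Delta^{-1}\log(1+z_m)$ with $z_m=(\hat\phi_n(m)-\phi_\nu(m))/\phi_\nu(m)$ and $|z_m|\le1/2$, so that $1+z_m$ stays in the right half-plane where the principal branch is analytic with a universal Lipschitz constant; even then one has to check that no $2\pi i$ wrap occurs between $\operatorname{Log}\hat\phi_n(m)$ and $\operatorname{Log}\phi_\nu(m)+\operatorname{Log}(1+z_m)$, a point the paper's proof also leaves implicit. (b) In the regime $\log K>nc_0^2/16$ it is $\sqrt{\log K}/\sqrt n\gtrsim c_0$, not $1/\sqrt K$ (which is then tiny), that bounds the target radius from below by a constant, so that the trivial estimator suffices once $R_1$ is chosen large in terms of $c$ and $\Delta$. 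With these fixes the argument goes through and matches the paper's conclusion.
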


\begin{proof}
We first show the above concentration inequality with $\|\hat \nu - \nu\|_{\mathbb H(\delta)}$ replaced by $|\hat\lambda-\lambda|$, where $\lambda= \int_I \nu = (\mathcal F \nu)(0)$ is the intensity and $\hat\lambda$ is an estimator defined as follows: Let $\phi_n(k) = (1/n) \sum_{j=1}^n \exp\{2\pi i k X_j\}$ be the empirical characteristic function, set $\Phi_n(k) = \Delta^{-1} \log \phi_n(k)$ for $\phi_n(k)\neq0$ and $\Phi_n(k)=0$ otherwise, where we take the principal branch of the complex logarithm. For $K \ge 2$ consider the estimator $\hat \lambda = -(1/K) \sum_{k=1}^K \Re \Phi_n(k).$ The L\'evy--Khintchine representation (\ref{LK}) yields $\Phi_\nu (k) :=\Delta^{-1} \log \phi_{\nu}(k) = \mathcal F \nu (k) - \lambda,$ where thanks to the restriction $\|\nu\|_{L^1} < \pi/\Delta$ the imaginary part on the r.h.s.~lies in $(-\pi/\Delta, \pi/\Delta)$ and hence $\log$ is the logarithm in the principle branch. We obtain 
\begin{align}
\hat \lambda - \lambda &= -\frac{1}{K} \sum_{k=1}^K \Re  (\Phi_n(k)- \Phi_\nu(k)) - \frac{1}{K} \sum_{k=1}^K (\Re  \Phi_\nu(k) + \lambda ) \nonumber\\
&= -\frac{1}{K} \sum_{k=1}^K \Re  (\Phi_n(k)- \Phi_\nu(k)) - \frac{1}{K} \sum_{k=1}^K \Re  \mathcal F\nu(k) \label{errorLambda}
\end{align} 
In order to linearise the first term in previous equation we define the event
\[
A_n=\left\{\left\|\frac{\phi_n-\phi_\nu}{\phi_\nu}\right\|_{K}\le \frac{1}{2}\right\}
\qquad \text{ with } \|f\|_{K}=\sup_{|k|\le K}|f(k)|.
\]
It holds $|\log(1+z)-z|\le 2|z|^2$ for $|z|\le 1/2$. Thus we have on the event $A_n$ for $|k|\le K$
\begin{align*}
\Phi_n(k)-\Phi_\nu(k)&=\frac{1}{\Delta}\log\left(\frac{\phi_n(k)-\phi_\nu(k)}{\phi_\nu(k)}+1\right)\displaybreak[0]\\
&=\frac1{\Delta}\left\{\frac{\phi_n(k)-\phi_\nu(k)}{\phi_\nu(k)}+O\bigg(\Big|\frac{\phi_n(k)-\phi_\nu(k)}{\phi_\nu(k)}\Big|^2\bigg)\right\}.
\end{align*}

The first term in \eqref{errorLambda}, up to linearisation, is purely stochastic and bounded by a term of the form $$\frac{1}{\Delta K} \sum_{k=1}^K \frac{|\phi_n(k)-\phi_{\nu}(k)|}{|\phi_{\nu}(k)|}.$$ Since $\|\nu\|_1 <\pi/\Delta$ we know that $\sup_k |1/\phi_{\nu}(k)| \le c'$ for some constant $c'=c'(\Delta)$. 
For the numerator we consider the $4K+4$ random variables 
\begin{align*}
&\pm\Re(\phi_n(-K)-\phi_\nu(-K)),\dots,\pm\Re(\phi_n(K)-\phi_\nu(K)),\\
&\pm\Im(\phi_n(-K)-\phi_\nu(-K)),\dots,\pm\Im(\phi_n(K)-\phi_\nu(K))
\end{align*}
and denote them by $Z_j$ with $j=1,\dots,4K+4$. These have bounded differences with constant $c^2=4/n$ which follows from using example b) before  Theorem~3.3.14 in \cite{GineNickl2016} and observing that $e^{2\pi i k (\cdot)}$ are uniformly bounded by 1. Applying this theorem we have $ E e^{\lambda Z_j}\le e^{\lambda^2 c^2 /8}=e^{\lambda^2/(2n)}$. By Lemma~2.3.4 in \cite{GineNickl2016} we further obtain that 
\[
 E\left[\max_{j=1,\dots,4K+4}Z_j\right]\le \sqrt{\frac2{n}\log(4K+4)}
\]
and denoting $Z=\max_{|k| \le K} |\phi_n(k)-\phi_{\nu}(k)|$ we have
\begin{align*}
 E[Z]&\le 2 E\left[\max_{|k|\le K}\left(\Re(\phi_n(k)-\phi_\nu(k)),\Im(\phi_n(k)-\phi_\nu(k))\right)\right] \\
&\le\sqrt{\frac{8}{n}\log(4K+4)}
\lesssim\sqrt{\frac{\log K}{n}}.
\end{align*}
For the concentration around the mean we observe that $Z$ itself also has bounded differences with $c^2=4/n$ and applying Theorem 3.3.14 in \cite{GineNickl2016} yields 
\begin{align*}
\PP(Z\ge  E Z +t)&\le e^{-2 t^2/c^2}=e^{-nt^2/2},~~\PP(Z\le  E Z -t)\le e^{-nt^2/2}.
\end{align*}
This shows that the linearisation of the first term in~\eqref{errorLambda} is bounded by a multiple of $(\sqrt{\log K}+x)/\sqrt{n}$.
On $A_n$ we can bound the remainder in the linearisation by a multiple of the same quantity. For $n/\log K$ large enough $ E Z$ is smaller than $1/(4c')$ and we can bound $\PP(A_n^c)$ by $\exp(-R_2 n)\le \exp(-R_2 n/\log K)$ using the concentration of $Z$. The bound $\PP(A_n^c)\le (1/R_2)\exp(-R_2 n/\log K)$ for all $n$ and $K$ is obtained by choosing a possibly smaller constant~$R_2$.

For the bias we bound, using the Cauchy--Schwarz inequality,
$$\left|\frac{1}{K} \sum_{k=1}^K \Re  \mathcal F\nu(k)\right| \le K^{-1/2} \sqrt {\sum_{k=1}^K |\mathcal F \nu(k)|^2} \leq \frac{\|\nu\|_{L^2}}{\sqrt K},$$ which explains the second regime in the inequality in Lemma \ref{concentration}. 

Now to estimate $\nu$ we first estimate $\mathcal F \nu(k), k \neq 0,$ by
$\mathcal F \hat \nu(k) = (\Phi_n(k) + \hat \lambda) 1_{[-K,K]}(k)$, where~$K$ is a spectral cut-off parameter. By standard theory of Sobolev spaces on the unit circle, an equivalent norm on $\mathbb H(\delta)$ is given by $$\|f\|_{\mathbb H(\delta)}'=\sum_k |\mathcal Ff(k)|^2 k^{-1} (\log(e+k))^{-2\delta}.$$ Using that $\sum_{k} k^{-1} (\log (e+k))^{-2\delta} $ converges for $\delta>1/2$ we obtain
\begin{align*}
\|\hat \nu - \nu\|_{\mathbb H(\delta)}^2 
&= \sum_{k} k^{-1} (\log (e+k))^{-2\delta} |\mathcal F \hat \nu(k) -  \mathcal F \nu(k)|^2 \displaybreak[0]\\
&= \sum_{|k|\le K} k^{-1} (\log (e+k))^{-2\delta} |\Phi_n(k)   - \Phi_\nu(k) + \hat \lambda - \lambda|^2 \\
&\qquad\qquad\qquad+\sum_{|k|> K} k^{-1} (\log k)^{-2\delta} |\mathcal F \nu(k)|^2\displaybreak[0]\\
&\lesssim (\hat \lambda - \lambda)^2 + \sum_{|k|\le K} k^{-1} (\log (e+ k))^{-2\delta} |\Phi_n(k) - \Phi_\nu(k)|^2 \\
&\qquad\qquad\qquad+ \sum_{|k|>K} k^{-1} (\log (e+k))^{-2\delta} |\mathcal F \nu(k)|^2  \\
&\lesssim (\hat \lambda - \lambda)^2 + \max_{|k|\le K} |\Phi_n(k) - \Phi_\nu(k)|^2 +\|\nu\|^2_{L^2}/K,
\end{align*}
which, repeating the above, gives the same bounds as those obtained for error of the intensity $\hat \lambda-\lambda$.
\end{proof}

The proof of the following proposition is contained in Section~\ref{sec:tests}.

\begin{proposition}\label{prop:tests}
Denote $\overline{\mathcal V}=\{\nu\in\mathcal V:\|\nu\|_{L^1}<\pi/\Delta \text{ and }\|\nu\|_{L^2}\le c\}$ for some $c,\Delta>0$.
Let~$\eps_n$ be such that $\sqrt{(\log n)/n}\lesssim \eps_n$ and $\eps_n=o( 1/\sqrt{\log n})$. Then for $\nu_0\in\overline{\mathcal V}$ there exists a sequence of tests (indicator functions) $\Psi_n\equiv \Psi(X_1, \dots, X_n)$ such that for every $C>0$, there exist $M=M(C,c,\Delta)>0$ such that for all $n$ large enough
\[
 E_{\nu_0}[\Psi_n]\to_{n\to\infty}0, \qquad 
\sup_{\nu\in\overline{\mathcal V}:\|\nu-\nu_0\|_{\mathbb H(\delta)}\ge M\eps_n} E_{\nu}[1-\Psi_n]\le 2 e^{-(C+4)n\eps_n^2}.
\]
\end{proposition}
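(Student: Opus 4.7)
The plan is to use the spectral estimator $\hat\nu$ supplied by Lemma~\ref{concentration} as a plug-in for a radius test centred at $\nu_0$, and to calibrate its parameters $K$ and $x$ so that a single application of the concentration inequality (\ref{freqrat}) covers both Type~I and Type~II requirements. Concretely I would define
$$\Psi_n = \1\bigl\{\|\hat\nu - \nu_0\|_{\mathbb H(\delta)} > M\eps_n/2\bigr\},$$
with $M = M(C,c,\Delta)$ to be specified later, and take the parameters in (\ref{freqrat}) as $K=n$ and $x = x_n := (M/(4R_1))\sqrt n\,\eps_n$.

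Under this choice, the standing assumption $\sqrt{(\log n)/n}\lesssim \eps_n$ yields $\sqrt{\log K}/\sqrt n \lesssim \eps_n$ and $1/\sqrt K = 1/\sqrt n \lesssim \eps_n$, while by construction $x_n/\sqrt n \lesssim \eps_n$; taking $M$ large enough therefore makes the radius $R_1\bigl((\sqrt{\log K} + x_n)/\sqrt n + 1/\sqrt K\bigr)$ in (\ref{freqrat}) strictly less than $M\eps_n/2$. Simultaneously, $e^{-D x_n^2} = \exp(-DM^2 n\eps_n^2/(16R_1^2)) \le e^{-(C+4)n\eps_n^2}$ as soon as $M \ge 4R_1\sqrt{(C+4)/D}$, and the remaining term $R_2^{-1}e^{-nR_2/\log K} = R_2^{-1}e^{-nR_2/\log n}$ is $\le e^{-(C+4)n\eps_n^2}$ for all $n$ large because the hypothesis $\eps_n = o(1/\sqrt{\log n})$ gives $(C+4)n\eps_n^2 = o(n/\log n)$.

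The Type~I bound is then immediate: applying Lemma~\ref{concentration} at $\nu_0 \in \overline{\mathcal V}$ gives
$$E_{\nu_0}[\Psi_n] \le e^{-Dx_n^2} + R_2^{-1} e^{-nR_2/\log n} \longrightarrow 0.$$
For the Type~II bound, fix any $\nu \in \overline{\mathcal V}$ with $\|\nu-\nu_0\|_{\mathbb H(\delta)} \ge M\eps_n$. On $\{\Psi_n = 0\}$ the triangle inequality forces $\|\hat\nu - \nu\|_{\mathbb H(\delta)} \ge M\eps_n/2$, and a second application of Lemma~\ref{concentration} at $\nu$, with the same $K$ and $x_n$ (the hypotheses $\|\nu\|_{L^1}<\pi/\Delta$ and $\|\nu\|_{L^2}\le c$ built into $\overline{\mathcal V}$ are exactly what is needed for the uniform bound in (\ref{freqrat})), delivers
$$E_\nu[1-\Psi_n] \le e^{-Dx_n^2} + R_2^{-1} e^{-nR_2/\log n} \le 2 e^{-(C+4)n\eps_n^2}$$
for all $n$ large, uniformly in such $\nu$.

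The only non-mechanical element is the simultaneous matching of the three scales $\sqrt{\log K}/\sqrt n$, $x/\sqrt n$, $1/\sqrt K$ with the required radius $\eps_n$ and of the tail exponent $e^{-Dx^2}$ with the prescribed Type~II rate $e^{-(C+4)n\eps_n^2}$; both sides of the assumption $\sqrt{(\log n)/n}\lesssim\eps_n=o(1/\sqrt{\log n})$ are used here, the lower bound to absorb the $\sqrt{\log K}$ and $1/\sqrt K$ contributions into the radius and the upper bound to dominate the slowly decaying error term $R_2^{-1}e^{-nR_2/\log n}$ by $e^{-(C+4)n\eps_n^2}$. Since the concentration inequality already supplies uniformity over $\overline{\mathcal V}$, no separate empirical-process argument is needed and the proposition follows.
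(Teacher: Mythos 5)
Your proposal is correct and follows essentially the same route as the paper: a plug-in radius test built from the estimator of Lemma~\ref{concentration} with $K=n$ and $x\propto\sqrt n\,\eps_n$, the triangle inequality for the Type~II error, and the two sides of the assumption on $\eps_n$ used exactly as you describe (the lower bound to absorb the $\sqrt{\log K}$ and $1/\sqrt K$ terms into the radius, the upper bound to dominate $R_2^{-1}e^{-nR_2/\log n}$ by $e^{-(C+4)n\eps_n^2}$). The only cosmetic difference is that the paper thresholds at $L'\eps_n$ and then takes $M\ge 2L'$, whereas you threshold directly at $M\eps_n/2$; the arguments are otherwise identical.
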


\begin{proposition} \label{better}
Suppose we have for some constants $c,C,D>0$, for a sequence $\eps_n$ such that $\sqrt{(\log n)/n}\lesssim \eps_n$ and $\eps_n=o( 1/\sqrt{\log n})$, for $\nu_0$ such that $e^{-D}\le  d\nu_0/ d\Lambda \le e^D$, for some prior $\Pi$ on a set $\{\nu\in\mathcal V: e^{-D}\le  d\nu/ d\Lambda \le e^D\}$ of L\'evy measures bounded from above and away from zero, for $n$ large enough and with $K_D$ from Lemma~\ref{L2bound} that
\begin{equation} \label{sball}
\Pi\left(\nu \in \mathcal V: \|\nu-\nu_0\|_{L^2} \le \eps_n/\sqrt{K_D}\right) \ge e^{-Cn\eps_n^2}
\end{equation}
and that 
\begin{equation} \label{except}
\Pi(\nu\in\mathcal V: \|\nu\|_{L^1} \ge \pi/\Delta\text{ or } \|\nu\|_{L^2} > c) \le L e^{-(C+4)n \eps_n^2}.
\end{equation}
If $\Pi(\cdot|X_1, \dots, X_n)$ is the posterior distribution from (\ref{post}), then there exists $M_0$ such that for every $M \ge M_0$, as $n \to \infty$ and in $\mathbb P_{\nu_0}^\mathbb N$-probability, $$\Pi(\nu: \|\nu -\nu_0\|_{\mathbb H(\delta)} \ge M\eps_n|X_1, \dots, X_n) \to 0.$$ 
\end{proposition}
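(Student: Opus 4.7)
The plan is to apply the abstract contraction result of Proposition \ref{cont0} with the choices $d(\nu,\nu')=\|\nu-\nu'\|_{\mathbb H(\delta)}$, sieve $\mathcal V_n=\overline{\mathcal V}=\{\nu\in\mathcal V:\|\nu\|_{L^1}<\pi/\Delta,\ \|\nu\|_{L^2}\le c\}$, rate sequence $\delta_n=\eps_n$ and the same $C,L$ appearing in the hypotheses. Three conditions must then be verified: (a) a small-ball lower bound on the prior mass of a Kullback--Leibler neighbourhood of $\nu_0$; (b) an exponentially small bound on the prior mass outside $\mathcal V_n$; and (c) the existence of tests separating $\nu_0$ from complements of $\|\cdot\|_{\mathbb H(\delta)}$-balls in $\mathcal V_n$.

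For (a), Lemma \ref{L2bound} applies uniformly to any $\nu$ with $e^{-D}\le d\nu/d\Lambda\le e^D$, which by hypothesis covers the entire support of $\Pi$. Hence the inclusion
$$\{\nu\in\mathcal V:\|\nu-\nu_0\|_{L^2}\le \eps_n/\sqrt{K_D}\}\subset\{\nu:K(\mathbb P_{\nu_0},\mathbb P_\nu)\le\eps_n^2,\ \Var_{\mathbb P_{\nu_0}}(\log(d\mathbb P_\nu/d\mathbb P_{\nu_0}))\le\eps_n^2\}$$ holds, and assumption (\ref{sball}) gives the required prior mass lower bound $e^{-Cn\eps_n^2}$. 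For (b), condition (\ref{except}) is exactly $\Pi(\mathcal V\setminus \mathcal V_n)\le L e^{-(C+4)n\eps_n^2}$ by the very definition of $\mathcal V_n=\overline{\mathcal V}$. For (c), one checks first that $\nu_0\in\overline{\mathcal V}$: the uniform bounds $e^{-D}\le d\nu_0/d\Lambda\le e^D$ imply $\|\nu_0\|_{L^1}\le e^D$ and $\|\nu_0\|_{L^2}\le e^D$, so (after enlarging $c$ if necessary and using that the intensity bound $e^D<\pi/\Delta$ is forced by the setting of the proposition, otherwise the prior mass of $\mathcal V_n$ would be zero and (\ref{sball}) would fail) $\nu_0$ indeed lies in $\overline{\mathcal V}$. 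Proposition \ref{prop:tests}, applied with the constant $C$ from (\ref{sball}) and the rate $\eps_n$ (which by hypothesis satisfies $\sqrt{(\log n)/n}\lesssim\eps_n=o(1/\sqrt{\log n})$), then furnishes tests $\Psi_n$ and a constant $M=M(C,c,\Delta)$ satisfying the error bounds required in Proposition \ref{cont0}, with separation rate $M\eps_n$.

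With all three ingredients in place, Proposition \ref{cont0} yields $\Pi(\nu:\|\nu-\nu_0\|_{\mathbb H(\delta)}\ge M_0\eps_n\,|\,X_1,\dots,X_n)\to 0$ in $\mathbb P_{\nu_0}^\mathbb N$-probability for every $M_0\ge M$, as desired. The step that required genuine work outside of bookkeeping is already done in Lemma \ref{L2bound} and, above all, in the test construction of Proposition \ref{prop:tests}: the main obstacle of the whole argument is that because $\|\cdot\|_{\mathbb H(\delta)}$ is weaker than the Hellinger distance on $\mathbb P_\nu$, the usual Le Cam/Birg\'e testing machinery does not apply directly and must be replaced by the spectral concentration inequality of Lemma \ref{concentration}; here, once Proposition \ref{prop:tests} is granted, the deduction of the contraction rate reduces to the verification just described.
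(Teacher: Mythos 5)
Your proposal is correct and follows essentially the same route as the paper's proof: both invoke Proposition \ref{cont0} with $d=\|\cdot\|_{\mathbb H(\delta)}$, $\mathcal V_n=\overline{\mathcal V}$ and $\delta_n=\eps_n$, use Lemma \ref{L2bound} to convert the Kullback--Leibler small-ball requirement into the $L^2$ small-ball condition (\ref{sball}), read off the sieve condition from (\ref{except}), and obtain the tests from Proposition \ref{prop:tests}. The additional checks you record (that $\nu_0\in\overline{\mathcal V}$, and the remark on why the Hellinger testing machinery is unavailable) are consistent with the paper, which leaves them implicit.
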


%\begin{remark} In an alternative version of the theorem where $\|\nu-\nu_0\|_{L^2}$ is replaced by $\|\nu-\nu_0\|_{\infty}$ we assume only that $\nu_0$ is bounded from above and away from zero and can allow for the prior to be supported on the whole set $\mathcal V$. Then $K_D$ is replaced by some constant $K$ such that $K(\mathbb P_{\nu_0}, \mathbb P_\nu) \le K \|\nu-\nu_0\|_{\infty}^2$ and $\Var_{\mathbb P_{\nu_0}}\left(\log (d\mathbb P_{\nu}/d\mathbb P_{\nu_0})\right) \le K \|\nu-\nu_0\|_{\infty}^2$ in some $L^\infty$ neighbourhood of $\nu_0$. Existence of $K$ is guaranteed by Lemma~\ref{L2bound}. \end{remark}

\begin{proof}
Starting with Proposition~\ref{cont0} we replace the condition on the Kullback--Leibler neighbourhood by a condition on a $L^2$ neighbourhood using Lemma~\ref{L2bound}. Further we choose $\mathcal V_n=\{\nu\in\mathcal V:\|\nu\|_{L^1} < \pi/\Delta, \|\nu\|_{L^2} \le c\}$, $d(\nu,\nu_0)=\|\nu -\nu_0\|_{\mathbb H(\delta)}$ and $\delta_n=\eps_n$. The existence of tests follows by Proposition~\ref{prop:tests}.
\end{proof}

\begin{proposition}\label{propball}
Grant Assumption~\ref{overall} for some $s>5/2$, $B>0$, and set
\begin{align}\label{epsn}
\eps_n=n^{-s/(2s+1)}(\log n)^{1/2}.
\end{align}
For the choice $J=J_n$ with $2^{J_n} \approx n^{1/(2s+1)}$ the prior \eqref{prior} satisfies for $n$ large enough the small ball probability condition~\eqref{sball}.
\end{proposition}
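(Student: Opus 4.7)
The plan is a three-step reduction followed by an explicit product computation.

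\emph{Step 1 (from $\nu$ to $v$).} Because the prior (\ref{prior0}) is supported in a uniformly bounded subset of $C(I)$ (thanks to the weights $a_l = 2^{-l}(l^2+1)^{-1}$) and $v_0 = \log \nu_0 \in L^\infty(I)$, the elementary estimate $|e^a - e^b| \le e^{|a| \vee |b|}|a - b|$ yields $\|\nu - \nu_0\|_{L^2} \le c_1 \|v - v_0\|_{L^2}$ for a constant $c_1$ depending only on the prior support and on $\nu_0$. Hence it suffices to lower bound $\Pi\{\|v - v_0\|_{L^2} \le \eps_n/(c_1\sqrt{K_D})\}$.

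\emph{Step 2 (absorbing the approximation bias).} Writing $v_{0,J} = P_{V_J} v_0$ and using the wavelet bound (\ref{wavprop}), $\|v_0 - v_{0,J}\|_{L^2} \lesssim \|v_0\|_{C^s} 2^{-Js} \approx n^{-s/(2s+1)} = \eps_n/\sqrt{\log n} = o(\eps_n)$. By the triangle inequality, for $n$ large enough it is enough to establish $\Pi\{\|v - v_{0,J}\|_{L^2} \le c_2 \eps_n\} \ge e^{-C n \eps_n^2}$ for some $c_2 > 0$.

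\emph{Step 3 (small ball for wavelet coefficients).} Since $v, v_{0,J} \in V_J$, orthonormality of the basis gives
$$\|v - v_{0,J}\|_{L^2}^2 = \sum_{l < J,\, k}(a_l u_{lk} - v_{0,lk})^2, \quad v_{0,lk} = \langle v_0, \psi_{lk}\rangle.$$
Set $\delta_n = c_2 \eps_n 2^{-J/2}$; since $\sum_{l=-1}^{J-1}(2^l \vee 1) = 2^J$, the uniform event $\{|a_l u_{lk} - v_{0,lk}| \le \delta_n \,\forall l, k\}$ implies $\|v - v_{0,J}\|_{L^2}^2 \le c_2^2 \eps_n^2$. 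Assumption~\ref{overall} gives $|v_{0,lk}| \le (B-\gamma) a_l$, and an elementary check (using $s > 5/2$ and $2^J \approx n^{1/(2s+1)}$) shows that $\delta_n/a_l = o(1)$ uniformly for $l \le J - 1$, so the ball of radius $\delta_n$ around $v_{0,lk}$ lies inside the support $(-Ba_l, Ba_l)$ of $a_l u_{lk}$. The marginal probability is therefore exactly $\delta_n/(B a_l)$ and, by independence of the coordinates,
$$\log \Pi\{|a_l u_{lk} - v_{0,lk}| \le \delta_n \,\forall l, k\} = \sum_{l < J}(2^l \vee 1)\log\frac{\delta_n}{B a_l}.$$
Substituting $a_l = 2^{-l}(l^2+1)^{-1}$, the dominant term is $2^J \log(c_2 \eps_n/B) \sim -2^J \log n$, while the polynomial-in-$l$ corrections contribute at most lower-order multiples of $2^J \log n$. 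Since $2^J \log n \approx n^{1/(2s+1)} \log n = n \eps_n^2$, this yields $\log \Pi \ge -C n \eps_n^2$ for some finite $C$, as required.

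\emph{Main obstacle.} The delicate part is step 3: tracking the constants in the explicit product exponent carefully enough to confirm that no correction term is accidentally of larger order than $2^J \log n$, so that the small ball rate matches the target $n\eps_n^2$. This depends crucially both on the specific decay of the weights $a_l$ (so that $\delta_n \leq \gamma a_l$ can be enforced uniformly) and on the interior condition (\ref{intpt}) holding with a fixed positive $\gamma$, both of which are hypotheses of Assumption~\ref{overall}. The first two steps are routine reductions.
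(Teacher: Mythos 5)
Your proof is correct and follows essentially the same route as the paper's: both reduce the $L^2$ small ball for $\nu$ to a per-coefficient event for the uniform variables $u_{lk}$ (using the interior condition (\ref{intpt}) to ensure the target interval sits inside the support $(-Ba_l,Ba_l)$) and then multiply $2^J$ marginal probabilities, each polynomial in $n$, to get $e^{-O(2^J\log n)}=e^{-O(n\eps_n^2)}$. The only difference is cosmetic: the paper reduces via $\|v-v_0\|_\infty$ and the weighted sum $\sum_l 2^{l/2}\max_k|\cdot|$, yielding an $l$-independent tolerance $c''\eps_n$ for the rescaled coefficients, whereas you use Parseval with the level-dependent tolerance $\delta_n=c_2\eps_n 2^{-J/2}$; both land on the same exponent.
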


The above proposition is proved in Section~\ref{sec:propball}. We now turn to the proof of Proposition~\ref{initio}. When modelling an $s$-regular function $\nu$, and when $\nu_0 \in C^s$ as well, Proposition~\ref{propball} shows (\ref{sball}) for the choice $\eps_n \approx n^{-s/(2s+1)} (\log n)^{1/2},$ and so we obtain the lower bound on the small ball probabilities. By Assumption~\ref{overall} we have $\|\nu\|_{L^1}<\pi/\Delta$ and we also see that the prior concentrates almost surely on a fixed $L^\infty$- (and then also $L^2$-) ball since $\|v\|_{\infty}^2 \lesssim \sum_l 2^{-l/2}$, thus~(\ref{except}) holds for $\Pi$ too. As a consequence we obtain 
\begin{equation}
\Pi(\nu: \|\nu-\nu_0\|_{\mathbb H(\delta)} \le M \eps_n |X_1, \dots, X_n) \to^{\mathbb P_{\nu_0}^\mathbb N} 1.
\end{equation}
Restricting to this event we can further bound $L^2$-distances: by $v_0 = \log \nu_0 \in C^s$ and (\ref{wavprop}) and using Lemma \ref{tedious} below (and the remark before it) we have on an event with posterior probability tending to one
\begin{align*}
&\|\nu-\nu_0\|_{L^2}^2 \lesssim \|v-v_0\|^2_{L^2} = \sum_{l < J} \sum_k \langle v-v_0, \psi_{lk} \rangle^2 + \sum_{l\ge J,k} \langle v_0, \psi_{lk} \rangle^2 \\
& \le 2^J J^{2\delta} \|v-v_0\|^2_{\mathbb H(\delta)} + O(2^{-2Js})  \lesssim 2^J J^{2\delta} \|\nu-\nu_0\|^2_{\mathbb H(\delta)} + O(2^{-2Js})  \lesssim {2^{J} J^{2\delta}}\eps_n^2
\end{align*}
so that, as $n \to \infty$,
$$\Pi(\nu: \|\nu-\nu_0\|_{L^2} \ge C 2^{J/2} J^\delta \eps_n |X_1, \dots, X_n) \to^{\mathbb P_{\nu_0}^\mathbb N} 0$$ 
and further using that with posterior probability tending to one
\begin{align*}
\|\nu-\nu_0\|_{\infty} &\lesssim \|v-v_0\|_{\infty} = \sum_{l < J} 2^{l/2} \max_k |\langle v-v_0, \psi_{lk} \rangle| + \sum_{l\ge J}2^{l/2}\max_{k} |\langle v_0, \psi_{lk} \rangle| \\
& \lesssim 2^{J/2}  \|v-v_0\|_{L^2} + O(2^{-Js})  \lesssim {2^{J} J^{\delta}}\eps_n
\end{align*}
which also implies that 
$$\Pi(\nu: \|\nu-\nu_0\|_{\infty} \ge C 2^{J}J^\delta \eps_n |X_1, \dots, X_n) \to^{\mathbb P_{\nu_0}^\mathbb N} 0.$$
For $\delta>1/2$ we have posterior contraction with rates $\eps_n^{L^2}$ and $\eps_n^{L^\infty}$ in $L^2$ and $L^\infty$, respectively, where 
\begin{align*}
\eps_n^{L^2}=n^{-\frac{s-1/2}{2s+1}}(\log n)^{1/2+\delta}\qquad\text{ and }\qquad
\eps_n^{L^\infty}=n^{-\frac{s-1}{2s+1}}(\log n)^{1/2+\delta}.
\end{align*}
Estimating $\|v-v_0\|_{L^p} \lesssim \|\nu-\nu_0\|_p$ for $p=2,\infty$ implies Proposition \ref{initio}. Moreover, using $(\eps_n^{L^p})^p\le (\eps_n^{L^\infty})^{p-2} (\eps_n^{L^2})^2$ we obtain for contraction in $L^p$ the rate
\begin{align}\label{contractionLp}
\eps_n^{L^p}=n^{-\frac{s+1/p-1}{2s+1}}(\log n)^{1/2+\delta}.
\end{align}

It remains to prove Lemma \ref{tedious}. Let us introduce the spaces $$\mathbb B(\delta) = \bigg\{f: \|f\|^2_{\mathbb B(\delta)}=\sum_{l,k} 2^{l} l^{2\delta} \langle f, \psi_{lk}\rangle^2<\infty\bigg\}, \quad \delta>1/2,$$ which are equal to the (logarithmically refined) Sobolev spaces $H^{1/2,\delta}(I)$. As in Proposition~4.3.12 in \cite{GineNickl2016} one shows that $\mathbb H(\delta)$ is the topological dual space of $\mathbb B(\delta)$. We further see directly from the definition of the prior that $v=\log \nu$ satisfies $$\|v\|^2_{\mathbb B(\delta')} = \sum_{l,k} 2^{l} l^{2\delta'}a_l^2 u^2_{lk} \le \sum_{l \le J} l^{2\delta'-4} \le c,~~~\text{any } \delta'<3/2,$$  and one further shows that also $\|\nu\|_{\mathbb B(\delta')}=\|e^v\|_{\mathbb B(\delta')}$ is bounded by a fixed constant $\Pi$-almost surely (e.g., using the modulus of continuity characterisation of the $\mathbb B(\delta)$-norm, proved as in Section 4.3.5 in \cite{GineNickl2016}). This justifies the application of the following lemma with $1/2<\delta < \delta'  <3/2$ in the above estimate. The lemma is proved in Section~\ref{sec:tedious}.
\begin{lemma} \label{tedious}
a) For any $\nu, \nu_0 \in  \mathbb B(\delta), \delta>1/2,$ such that $\nu, \nu_0$ are bounded away from zero on~$I$ and such that $\|\nu - \nu_0\|_{\mathbb B(\delta)} \to 0,$ we have $\|\log \nu - \log \nu_0 \|_{\mathbb H(\delta)} \lesssim \|\nu-\nu_0\|_{\mathbb H(\delta)}.$

b)If $\|\nu-\nu_0\|_{\mathbb H(\delta)} \to 0$ and $\nu, \nu_0$ are uniformly bounded in $\mathbb B(\delta')$, then for any $\delta< \delta'$ we have $\|\nu-\nu_0\|_{\mathbb B(\delta)} \to 0$.
\end{lemma}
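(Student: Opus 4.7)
My plan is to treat the two parts separately, with part (a) resting on a duality/multiplier argument and part (b) on a simple frequency-splitting interpolation.

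For part (a), I would use the fundamental theorem of calculus to write
\[
\log\nu-\log\nu_0 \;=\; g\cdot(\nu-\nu_0),\qquad g(x)=\int_0^1\frac{dt}{\nu_0(x)+t(\nu(x)-\nu_0(x))},
\]
and then turn the estimate into a multiplier bound. First, since $\nu,\nu_0$ are bounded and bounded away from zero and are uniformly bounded in $\mathbb B(\delta)$ (with $\delta>1/2$, so in particular $\mathbb B(\delta)\hookrightarrow C(I)$), standard smooth-composition / Banach-algebra results for the log-refined Sobolev space $\mathbb B(\delta)=H^{1/2,\delta}$ (provable as in Section~4.3.5 of \cite{GineNickl2016} via the modulus-of-continuity characterisation, or via a Bony paraproduct decomposition of wavelet coefficients) give that $g\in\mathbb B(\delta)$ with norm bounded uniformly in $\nu$ staying close to $\nu_0$. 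Next, exploiting the duality $\mathbb H(\delta)=\mathbb B(\delta)^*$ stated just before the lemma, together with the multiplier property $\|g\phi\|_{\mathbb B(\delta)}\lesssim\|g\|_{\mathbb B(\delta)}\|\phi\|_{\mathbb B(\delta)}$, I would write
\[
\|g(\nu-\nu_0)\|_{\mathbb H(\delta)} \;=\; \sup_{\|\phi\|_{\mathbb B(\delta)}\le1}\bigl|\langle\nu-\nu_0,\,g\phi\rangle\bigr|\;\le\;\|\nu-\nu_0\|_{\mathbb H(\delta)}\sup_{\phi}\|g\phi\|_{\mathbb B(\delta)}\;\lesssim\;\|\nu-\nu_0\|_{\mathbb H(\delta)}.
\]
The main obstacle in part (a) is the verification of the algebra and smooth-composition properties for $\mathbb B(\delta)$ with the logarithmic weight, but this is essentially classical for $\delta>1/2$ by translating the wavelet norm into a modulus-of-continuity norm and using that $x\mapsto 1/x$ is Lipschitz on any interval bounded away from zero.

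For part (b), I would use a simple dyadic split. Fix $L\in\mathbb N$ to be chosen and decompose, with $h=\nu-\nu_0$,
\[
\|h\|^2_{\mathbb B(\delta)} \;=\; \sum_{l<L,k} 2^{l}l^{2\delta}\langle h,\psi_{lk}\rangle^2 \;+\; \sum_{l\ge L,k} 2^{l}l^{2\delta}\langle h,\psi_{lk}\rangle^2.
\]
The low-frequency block is bounded by $\max_{l<L}2^{2l}l^{4\delta}\cdot\|h\|^2_{\mathbb H(\delta)} \le 2^{2L}L^{4\delta}\|h\|^2_{\mathbb H(\delta)}$, while the high-frequency block is bounded by $\max_{l\ge L}l^{-2(\delta'-\delta)}\cdot\|h\|^2_{\mathbb B(\delta')} \le L^{-2(\delta'-\delta)}\|h\|^2_{\mathbb B(\delta')}$. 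Since $\|h\|_{\mathbb B(\delta')}=O(1)$ by hypothesis and $\|h\|_{\mathbb H(\delta)}\to0$, choosing $L=L_n\to\infty$ slowly enough (for instance $L_n=\lceil\tfrac{1}{3}\log_2(1/\|h\|_{\mathbb H(\delta)})\rceil$) makes both terms vanish, proving $\|h\|_{\mathbb B(\delta)}\to 0$. This is the easy part; no obstacle is expected here beyond checking the elementary bookkeeping of the two weight ratios $2^{2l}l^{4\delta}$ and $l^{-2(\delta'-\delta)}$.
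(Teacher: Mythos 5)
Your proposal is correct and follows essentially the same route as the paper: part (b) is identical (the same dyadic split with the same two weight ratios $2^{2l}l^{4\delta}$ and $l^{-2(\delta'-\delta)}$ and a slowly growing cut-off), and part (a) rests on the same two ingredients, namely the multiplier/Banach-algebra property of $\mathbb B(\delta)$ and the duality $\mathbb H(\delta)=\mathbb B(\delta)^*$. The only difference is cosmetic: the paper constructs the multiplier $g$ via the power series of $\log(1+x)$ applied to $(\nu-\nu_0)/\nu_0$, which reduces the required composition fact to the algebra property together with $\nu_0^{-1}\in\mathbb B(\delta)$, whereas you use the integral mean-value representation and appeal to a general smooth-composition result; both yield a uniformly $\mathbb B(\delta)$-bounded $g$ and then the same duality estimate.
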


\section{Proof of Proposition \ref{prop:functbvm}}\label{pertu}

Using the definition of $S_n(\nu)$ and the formula for the posterior distribution we obtain
\begin{align}
&E^{\Pi^{D_{n,M}}}\left[e^{t \sqrt n F(\nu)} \Big|X_1, \dots, X_n \right]\nonumber\\
&\qquad\qquad = E^{\Pi^{D_{n,M}}}\left[e^{S_n(\nu)-t \sqrt n \int A_{\nu_0}(v-v_0) A_{\nu_0}(\eta)d\mathbb P_{\nu_0}} \Big|X_1, \dots, X_n \right]\label{lapfunc}\\
&\qquad\qquad= \frac{\int_{D_{n,M}} e^{S_n(\nu)-t \sqrt n \int A_{\nu_0}(v-v_0) A_{\nu_0}(\eta)d\mathbb P_{\nu_0} + \ell_n(\nu)}d\Pi(\nu)}{\int_{D_{n,M}}e^{\ell_n(\nu)}d\Pi(\nu)}.\nonumber
\end{align}
By Assumption~\ref{overall} we have $s>5/2$ so that by Remark~\ref{rem} condition~\eqref{larges} implies condition~\eqref{smalls} and we conclude that the entire Assumption~\ref{simpler} is satisfied. By Lemma~\ref{lemsim}, the choice of~$J$ as in \eqref{jprior}, Assumption~\ref{simpler} and the $L^p$-contraction rates~\eqref{contractionLp} derived from Proposition~\ref{initio} we have that Assumption~\ref{collection} is satisfied. In Section~\ref{sec:likexp} we prove that under Assumption~\ref{collection}
\begin{align} 
& -t \sqrt n \int A_{\nu_0}(v-v_0) A_{\nu_0}(\eta)d\mathbb P_{\nu_0} + \ell_n(\nu)\label{crucexp}\\
&\qquad\qquad=\frac{t^2}{2}\|A_{\nu_0}(\eta)\|_{L^2(\mathbb P_{\nu_0})}^2  - \frac{t}{\sqrt n} \sum_{k=1}^n A_{\nu_0}(\eta)(X_k)+ \ell_n(\nu_t) + r_n'(\nu),\nonumber
\end{align}
where $\sup_{\nu\in D_{n,M}}|r_n'(\nu)|=o_{\mathbb P_{\nu_0}^{\mathbb N}}(1)$ with the nonstochastic null sequence implicit in the $o_{\mathbb P^{\mathbb N}_{\nu_0}}$ notation uniform in $\eta\in\mathcal H_n$. Since the first two terms on the right hand side do not depend on $\nu$ they can be taken outside the posterior integral in \eqref{lapfunc} so that
\begin{align*}
&E^{\Pi^{D_{n,M}}}\left[e^{t \sqrt n F(\nu)} \Big|X_1, \dots, X_n \right]\\
&=\exp \bigg\{\frac{t^2}{2}\|A_{\nu_0}(\eta)\|_{L^2(\mathbb P_{\nu_0})}^2  - \frac{t}{\sqrt n} \sum_{k=1}^n A_{\nu_0}(\eta)(X_k)  \bigg\}\\
&\qquad\qquad\qquad\times \frac{\int_{D_{n,M}} e^{S_n(\nu) + \ell_n(\nu_t)+ r_n'(\nu)}d\Pi(\nu)}{\int_{D_{n,M}}e^{\ell_n(\nu)}d\Pi(\nu)}.
\end{align*}
By the mean value theorem for integrals $r_n'(\nu)$ can be replaced by $r_n$ not depending on~$\nu$ with $|r_n|\le\sup_{\nu\in D_{n,M}}|r_n'(\nu)|=o_{\mathbb P_{\nu_0}^{\mathbb N}}(1)$ in the above display finishing the proof of the proposition.

In order to prove the crucial perturbation approximation (\ref{crucexp}), we first need to obtain formulas for the directional derivatives of the likelihood function, which is done in the next section.

\subsection{Directional derivatives of the likelihood function}

We fix a positive and absolutely continuous L\'evy measure $\nu_0 = \lambda_0 \mu_0$ with corresponding infinitely divisible distribution~$\mathbb P_{\nu_0}$. We set $v_0 = \log \nu_0$ so that $\nu_0 = \exp v_0$ and parametrise a path away from $\nu_0$ as $$\nu^{(s)} = \exp (s(v-v_0)+v_0), ~s \in [0,1].$$ The resulting compound Poisson measure can be identified in the Fourier domain as
\begin{align*}
&\mathcal F \mathbb P_{\nu^{(s+h)}}(k) = \exp \left(\Delta\int (e^{2\pi i k x} -1 ) d\nu^{(s+h)}(x) \right) \displaybreak[0]\\
&= \exp \left(\Delta\int (e^{2\pi i k x} -1 ) \nu^{(s)}(x) e^{h(v-v_0)(x)}dx \right) \displaybreak[0]\\
&= \exp \left(\Delta\int (e^{2\pi i k x} -1 ) \nu^{(s)}(x) \left(e^{h(v-v_0)(x)}-1\right)dx + \Delta\int (e^{2\pi i k x} -1 ) \nu^{(s)}(x) dx \right) \displaybreak[0]\\
& = \mathcal F \mathbb P_{\nu^{(s)}}(k) \times \exp \left(\Delta\int (e^{2\pi i k x} -1 ) \nu^{(s),h}(x)dx \right),
\end{align*}
where $\nu^{(s),h}(x):=\nu^{(s)}(x) \left(e^{h(v-v_0)(x)}-1\right)$ is a finite signed measure on~$I$. One checks by the usual properties of convolution and definition of $e^z$ that the second factor in the last product is the Fourier transform of the finite signed measure $$e^{-\Delta\nu^{(s),h} (I)} \sum_{k=0}^\infty \frac{\Delta^k(\nu^{(s),h})^{\ast k}}{k!}$$ and so we conclude by injectivity of $\mathcal F$ that 
\begin{equation}\label{signed}
\mathbb P_{\nu^{(s+h)}} = e^{-\Delta\nu^{(s),h}(I)} \sum_{k=0}^\infty \frac{\Delta^k(\nu^{(s),h})^{\ast k}}{k!} \ast \mathbb P_{\nu^{(s)}}.
\end{equation}
Let $\Lambda$ denote the Lebesgue (probability) measure on~$I$. We observe that the resulting compound Poisson measure is of the form $\PP_\Lambda=e^{-\Delta}\delta_0+(1-e^{-\Delta})\Lambda$. Both $\mathbb P_{\nu^{(s)}}$ and $\mathbb P_{\nu^{(s+h)}}$ are absolutely continuous with respect to $\PP_\Lambda$. We will now determine the first five derivatives of $ d\PP_{\nu^{(s)}}/ d\PP_{\Lambda}$. To this end we expand \eqref{signed} in terms of $h$. We start with the factor in front of the sum and expand
\begin{align*}
&e^{-\Delta\nu^{(s),h}(I)}
=\exp\left(-\Delta\int(e^{h(v-v_0)(x)}-1) d \nu^{(s)}\right)\displaybreak[0]\\
&=\exp\left(-\Delta\int h(v-v_0)(x)+\frac{h^2}{2}(v-v_0)^2(x)+\frac{h^3}{6}(v-v_0)^3(x)+O(h^4) d \nu^{(s)}\right)\displaybreak[0]\\
&=1-\Delta\int h(v-v_0)(x)+\frac{h^2}{2}(v-v_0)^2(x)+\frac{h^3}{6}(v-v_0)^3(x) d \nu^{(s)}\\
&\quad+\frac{\Delta^2}{2}\left(\int h(v-v_0)(x)+\frac{h^2}{2}(v-v_0)^2(x)+\frac{h^3}{6}(v-v_0)^3(x) d \nu^{(s)}\right)^2\\
&\quad-\frac{\Delta^3}{6}\left(\int h(v-v_0)(x)+\frac{h^2}{2}(v-v_0)^2(x)+\frac{h^3}{6}(v-v_0)^3(x) d \nu^{(s)}\right)^3
+O(h^4)\displaybreak[0]\\
&=1-\Delta h\int v-v_0 d\nu^{(s)}-\Delta\frac{h^2}{2}\int(v-v_0)^2 d\nu^{(s)}-\Delta\frac{h^3}{6}\int(v-v_0)^3 d\nu^{(s)}\\
&\quad+\frac{\Delta^2}{2}h^2\left(\int v-v_0 d\nu^{(s)}\right)^2+\frac{\Delta^2}{2}h^3\int v-v_0  d\nu^{(s)} \int (v-v_0)^2 d\nu^{(s)}\\
&\quad-\frac{\Delta^3}{6}h^3\left(\int v-v_0  d\nu^{(s)}\right)^3+O(h^4).
\end{align*} 
From the definition of $\nu^{(s),h}$ we observe that $(\nu^{(s),h})^{\ast k}=O(h^k)$.
Using \eqref{signed} we obtain
\begin{align*}
&\frac{ d\PP_{\nu^{(s+h)}}}{ d\PP_\Lambda}-\frac{ d\PP_{\nu^{(s)}}}{ d\PP_\Lambda}
=\frac{ d}{ d\PP_\Lambda}\bigg\{ e^{-\Delta\nu^{(s),h}(I)} \sum_{k=0}^\infty \frac{\Delta^k(\nu^{(s),h})^{\ast k}}{k!} \ast \mathbb P_{\nu^{(s)}}-\mathbb P_{\nu^{(s)}}\bigg\}\displaybreak[0]\\
&=\frac{ d}{ d\PP_\Lambda}\bigg\{\bigg(
1-\Delta h\int v-v_0 d\nu^{(s)}-\Delta\frac{h^2}{2}\int(v-v_0)^2 d\nu^{(s)}-\Delta\frac{h^3}{6}\int(v-v_0)^3 d\nu^{(s)}\\
&\quad+\frac{\Delta^2}{2}h^2\left(\int v-v_0 d\nu^{(s)}\right)^2+\frac{\Delta^2}{2}h^3\int v-v_0  d\nu^{(s)} \int (v-v_0)^2 d\nu^{(s)}\\
&\quad-\frac{\Delta^3}{6}h^3\left(\int v-v_0  d\nu^{(s)}\right)^3+O(h^4)
\bigg)\\
&\bigg(
\delta_0+\Delta \nu^{(s)}(e^{h(v-v_0)(x)}-1)
+\frac{\Delta^2}{2} (\nu^{(s)}(e^{h(v-v_0)(x)}-1))^{\ast2}\\
&
+\frac{\Delta^3}{6} (\nu^{(s)}(e^{h(v-v_0)(x)}-1))^{\ast3}+O(h^4)
\bigg)
\ast \PP_{\nu^{(s)}}
-\PP_{\nu^{(s)}}
\bigg\}.
\end{align*}
To find the first derivative we gather all terms that are linear in $h$ and obtain
\begin{align*}
\frac{ d}{ d\PP_\Lambda}\bigg\{
\bigg(\Delta\nu^{(s)} h (v-v_0)-\Delta h \int v-v_0 d\nu^{(s)}\delta_0\bigg)\ast\PP_{\nu^{(s)}}
\bigg\}\\
=h\Delta\frac{ d((\nu^{(s)}(v-v_0))\ast\PP_{\nu^{(s)}}-\int v-v_0 d \nu^{(s)} \PP_{\nu^{(s)}} )}{ d\PP_\Lambda}.
\end{align*}
This gives the first derivative
\begin{align*}
&\frac{ d}{ d s}\frac{ d\PP_{\nu^{(s)}}}{ d\PP_\Lambda}
=\Delta\frac{ d((\nu^{(s)}(v-v_0))\ast\PP_{\nu^{(s)}}-\int v-v_0 d \nu^{(s)} \PP_{\nu^{(s)}} )}{ d\PP_\Lambda}.
\end{align*}
Gathering all terms quadratic in $h$ we find
\begin{align*}
&\frac{ d}{ d\PP_\Lambda}\bigg\{
\bigg(
\Delta\nu^{(s)}\frac{h^2}{2}(v-v_0)^2+\frac{\Delta^2}{2}(\nu^{(s)}h(v-v_0))^{\ast2}-\Delta^2h\int v-v_0  d\nu^{(s)} \nu^{(s)} h(v-v_0)\\
&\qquad\qquad-\frac{\Delta h^2}{2}\int(v-v_0)^2 d\nu^{(s)}\delta_0+\frac{\Delta^2h^2}{2}\bigg(\int v-v_0 d\nu^{(s)}\bigg)^2\delta_0
\bigg)\ast\PP_{\nu^{(s)}}\bigg\}\displaybreak[0]\\
&=\frac{h^2}{2}\frac{ d}{ d\PP_\Lambda}\bigg\{
\bigg(\Delta\nu^{(s)}(v-v_0)^2-\Delta\int(v-v_0)^2 d\nu^{(s)}\delta_0+\Delta^2(\nu^{(s)}(v-v_0))^{\ast2}\\
&\qquad\qquad\qquad-2\Delta^2\int v-v_0 d\nu^{(s)} (v-v_0)\nu^{(s)}+\Delta^2\bigg(\int v-v_0 d\nu^{(s)}\bigg)^2\delta_0
\bigg)\ast\PP_{\nu^{(s)}}\bigg\}.
\end{align*}
And this gives the second derivative 
\begin{align*}
&\frac{ d^2}{ d s^2}\frac{ d\PP_{\nu^{(s)}}}{ d\PP_\Lambda}
=\frac{ d}{ d\PP_\Lambda}\bigg\{\Delta((v-v_0)^2\nu^{(s)})*\PP_{\nu^{(s)}}-\Delta\int(v-v_0)^2 d \nu^{(s)}\PP_{\nu^{(s)}}\\
&\qquad +\Delta^2\left(((v-v_0)\nu^{(s)})-\delta_0\int(v-v_0) d \nu^{(s)}\right)^{*2}*\PP_{\nu^{(s)}}\bigg\}.
\end{align*}
Finally we gather all terms which are cubic in $h$. This yields
\begin{align*}
&\frac{ d}{ d\PP_\Lambda}\bigg\{
\bigg(\Delta\nu^{(s)}\frac{h^3}{6}(v-v_0)^3+\Delta^2 \bigg((\nu^{(s)}h(v-v_0))\ast\Big(\nu^{(s)}\frac{h^2}{2}(v-v_0)^2\Big)\bigg)\\
&+\frac{\Delta^3}{6}(\nu^{(s)}h(v-v_0))^{\ast3}\\
&-\Delta^2 h \int v-v_0 d \nu^{(s)}\nu^{(s)}\frac{h^2}{2}(v-v_0)^2
-\Delta^3 h \int v-v_0 d\nu^{(s)}\frac{1}{2}(\nu^{(s)}h (v-v_0))^{\ast2}\\
&+\frac{h^2}{2}\bigg(\Delta^3\Big(\int v-v_0 d \nu^{(s)}\Big)^2-\Delta^2\int(v-v_0)^2 d \nu^{(s)}\bigg)\nu^{(s)} h (v-v_0) \\
& -\frac{h^3\Delta^3}{6}\bigg(\int v-v_0 d\nu^{(s)}\bigg)^3\delta_0\\
&-\frac{h^3\Delta}{6}\int(v-v_0)^3 d\nu^{(s)}\delta_0+\frac{h^3\Delta^2}{2}\int v-v_0 d\nu^{(s)}\int(v-v_0)^2 d\nu^{(s)}\delta_0\bigg)\ast\PP_{\nu^{(s)}}\bigg\}.
\end{align*}
In this way we obtain the third derivative
\begin{align*}
&\frac{ d^3}{ d s^3}\frac{ d\PP_{\nu^{(s)}}}{ d\PP_\Lambda}
=\frac{ d}{ d\PP_\Lambda}\bigg\{\Delta((v-v_0)^3\nu^{(s)})*\PP_{\nu^{(s)}}-\Delta\int(v-v_0)^3 d \nu^{(s)}\PP_{\nu^{(s)}}\\
&+3\Delta^2\left(((v-v_0)\nu^{(s)})-\delta_0\int(v-v_0) d \nu^{(s)}\right)*\left(((v-v_0)^2\nu^{(s)})-\delta_0\int(v-v_0)^2 d \nu^{(s)}\right)*\PP_{\nu^{(s)}}\\
&+\Delta^3\left(((v-v_0)\nu^{(s)})-\delta_0\int(v-v_0) d \nu^{(s)}\right)^{*3}*\PP_{\nu^{(s)}}\bigg\}.
\end{align*}
In a similar way we obtain for the fourth and fifth derivative
\begin{align*}
&\frac{ d^4}{ d s^4}\frac{ d\PP_{\nu^{(s)}}}{ d\PP_\Lambda}
=\frac{ d}{ d\PP_\Lambda}\bigg\{\Delta((v-v_0)^4\nu^{(s)})*\PP_{\nu^{(s)}}-\Delta\int(v-v_0)^4 d \nu^{(s)}\PP_{\nu^{(s)}}\\
&+3\Delta^2\left(((v-v_0)^2\nu^{(s)})-\delta_0\int(v-v_0)^2 d \nu^{(s)}\right)^{*2}*\PP_{\nu^{(s)}}\\
&+4\Delta^2\left(((v-v_0)\nu^{(s)})-\delta_0\int(v-v_0) d \nu^{(s)}\right)*\left(((v-v_0)^3\nu^{(s)})-\delta_0\int(v-v_0)^3 d \nu^{(s)}\right)*\PP_{\nu^{(s)}}\\
&+6\Delta^3\left(((v-v_0)\nu^{(s)})-\delta_0\int(v-v_0) d \nu^{(s)}\right)^{*2}*\left(((v-v_0)^2\nu^{(s)})-\delta_0\int(v-v_0)^2 d \nu^{(s)}\right)*\PP_{\nu^{(s)}}\\
&+\Delta^4\left(((v-v_0)\nu^{(s)})-\delta_0\int(v-v_0) d \nu^{(s)}\right)^{*4}*\PP_{\nu^{(s)}}\bigg\},\allowdisplaybreaks[1]\\
&\frac{ d^5}{ d s^5}\frac{ d\PP_{\nu^{(s)}}}{ d\PP_\Lambda}
=\frac{ d}{ d\PP_\Lambda}\bigg\{\Delta((v-v_0)^5\nu^{(s)})*\PP_{\nu^{(s)}}-\Delta\int(v-v_0)^5 d \nu^{(s)}\PP_{\nu^{(s)}}\\
&+10\Delta^2\left(((v-v_0)^2\nu^{(s)})-\delta_0\int(v-v_0)^2 d \nu^{(s)}\right)*\left(((v-v_0)^3\nu^{(s)})-\delta_0\int(v-v_0)^3 d \nu^{(s)}\right)*\PP_{\nu^{(s)}}\\
&+5\Delta^2\left(((v-v_0)\nu^{(s)})-\delta_0\int(v-v_0) d \nu^{(s)}\right)*\left(((v-v_0)^4\nu^{(s)})-\delta_0\int(v-v_0)^4 d \nu^{(s)}\right)*\PP_{\nu^{(s)}}\\
&+10\Delta^3\left(((v-v_0)\nu^{(s)})-\delta_0\int(v-v_0) d \nu^{(s)}\right)^{*2}*\left(((v-v_0)^3\nu^{(s)})-\delta_0\int(v-v_0)^3 d \nu^{(s)}\right)*\PP_{\nu^{(s)}}\\
&+15\Delta^3\left(((v-v_0)^2\nu^{(s)})-\delta_0\int(v-v_0)^2 d \nu^{(s)}\right)^{*2}*\left(((v-v_0)\nu^{(s)})-\delta_0\int(v-v_0) d \nu^{(s)}\right)*\PP_{\nu^{(s)}}\\
&+10\Delta^4\left(((v-v_0)\nu^{(s)})-\delta_0\int(v-v_0) d \nu^{(s)}\right)^{*3}*\left(((v-v_0)^2\nu^{(s)})-\delta_0\int(v-v_0)^2 d \nu^{(s)}\right)*\PP_{\nu^{(s)}}\\
&+\Delta^5\left(((v-v_0)\nu^{(s)})-\delta_0\int(v-v_0) d \nu^{(s)}\right)^{*5}*\PP_{\nu^{(s)}}\bigg\}.
\end{align*}
Let $L_0^2(\PP_\nu):=\{g\in L^2(\PP_\nu):\int g d \PP_\nu=0\}$. Motivated by the structure of the derivatives we define the multilinear form
\begin{align}
&A_\nu|_{L^2(\nu)^{\otimes k}}:L^2(\nu)^{\otimes k}\to L_0^2(\PP_\nu),\label{multform}\\
&(w_1,\dots,w_k)\mapsto\Delta^k\frac{ d((w_1\nu-\delta_0\int w_1 d\nu)*\dots*(w_k\nu-\delta_0\int w_k d\nu)*\PP_\nu)}{ d\PP_\nu}.\nonumber
\end{align}

In view of the derivatives of the log-likelihood we divide the derivatives by $ d \PP_{\nu^{(s)}}/ d\PP_\Lambda$. Then the dominating measure $\PP_\Lambda$ cancels and we suppress it in the notation. We obtain the following expressions
\begin{align*}
\frac{ d \frac{ d}{ d s}\PP_{\nu^{(s)}}}{ d \PP_{\nu^{(s)}}}
&=A_{\nu^{(s)}}(v-v_0),\allowdisplaybreaks[1]\\
\frac{ d \frac{ d^2}{ d s^2}\PP_{\nu^{(s)}}}{ d \PP_{\nu^{(s)}}}
&=A_{\nu^{(s)}}(v-v_0)^2+ A_{\nu^{(s)}}(v-v_0,v-v_0),\allowdisplaybreaks[1]\\
\frac{ d \frac{ d^3}{ d s^3}\PP_{\nu^{(s)}}}{ d \PP_{\nu^{(s)}}}
&=A_{\nu^{(s)}}(v-v_0)^3+3 A_{\nu^{(s)}}(v-v_0,(v-v_0)^2)+A_{\nu^{(s)}}(v-v_0,v-v_0,v-v_0),\allowdisplaybreaks[1]\\
\frac{ d \frac{ d^4}{ d s^4}\PP_{\nu^{(s)}}}{ d \PP_{\nu^{(s)}}}
&=A_{\nu^{(s)}}(v-v_0)^4+4 A_{\nu^{(s)}}(v-v_0,(v-v_0)^3)+3A_{\nu^{(s)}}((v-v_0)^2,(v-v_0)^2)\\
&\quad+6 A_{\nu^{(s)}}(v-v_0,v-v_0,(v-v_0)^2)+A_{\nu^{(s)}}(v-v_0,v-v_0,v-v_0,v-v_0)\\
\frac{ d \frac{ d^5}{ d s^5}\PP_{\nu^{(s)}}}{ d \PP_{\nu^{(s)}}}
&=A_{\nu^{(s)}}(v-v_0)^5+5 A_{\nu^{(s)}}(v-v_0,(v-v_0)^4)+10A_{\nu^{(s)}}((v-v_0)^2,(v-v_0)^3)\\
&\quad+10 A_{\nu^{(s)}}(v-v_0,v-v_0,(v-v_0)^3)+15A_{\nu^{(s)}}((v-v_0)^2,(v-v_0)^2,v-v_0)\\
&\quad+10 A_{\nu^{(s)}}(v-v_0,v-v_0,v-v_0,(v-v_0)^2)\\
&\quad+A_{\nu^{(s)}}(v-v_0,v-v_0,v-v_0,v-v_0,v-v_0).
\end{align*}
With the densities at hand we can determine the derivatives of the empirical log-likelihood
\begin{align*}
&D\ell_n(\nu_0)[v-v_0]=\sum_{j=1}^{n}\frac{ d \frac{ d}{ d s}\PP_{\nu^{(s)}}}{ d \PP_{\nu^{(s)}}}\bigg|_{s=0}(X_j)
,\\
&D^2\ell_n(\nu_0)[v-v_0,v-v_0]=\sum_{j=1}^{n}\frac{ d \frac{ d^2}{ d s^2}\PP_{\nu^{(s)}}}{ d \PP_{\nu^{(s)}}}\bigg|_{s=0}(X_j)-\sum_{j=1}^{n}\bigg(\frac{ d \frac{ d}{ d s}\PP_{\nu^{(s)}}}{ d \PP_{\nu^{(s)}}}\bigg)^2\bigg|_{s=0}(X_j)\\
&D^3\ell_n(\nu^{(s)})[v-v_0,v-v_0,v-v_0]\displaybreak[0]\\
&=\sum_{j=1}^{n}\frac{ d \frac{ d^3}{ d s^3}\PP_{\nu^{(s)}}}{ d \PP_{\nu^{(s)}}}(X_j)-3\sum_{j=1}^{n}\frac{ d \frac{ d^2}{ d s^2}\PP_{\nu^{(s)}}}{ d \PP_{\nu^{(s)}}}(X_j)\frac{ d \frac{ d}{ d s}\PP_{\nu^{(s)}}}{ d \PP_{\nu^{(s)}}}(X_j)+2\sum_{j=1}^{n}\bigg(\frac{ d \frac{ d}{ d s}\PP_{\nu^{(s)}}}{ d \PP_{\nu^{(s)}}}(X_j)\bigg)^3\displaybreak[0]\\
&D^4\ell_n(\nu^{(s)})[v-v_0,v-v_0,v-v_0,v-v_0]\\
&=\sum_{j=1}^{n}\frac{ d \frac{ d^4}{ d s^4}\PP_{\nu^{(s)}}}{ d \PP_{\nu^{(s)}}}(X_j)-4\sum_{j=1}^{n}\frac{ d \frac{ d^3}{ d s^3}\PP_{\nu^{(s)}}}{ d \PP_{\nu^{(s)}}}(X_j)\frac{ d \frac{ d}{ d s}\PP_{\nu^{(s)}}}{ d \PP_{\nu^{(s)}}}(X_j)-3\sum_{j=1}^{n}\bigg(\frac{ d \frac{ d^2}{ d s^2}\PP_{\nu^{(s)}}}{ d \PP_{\nu^{(s)}}}(X_j)\bigg)^2\\
&\quad+12\sum_{j=1}^{n}\frac{ d \frac{ d^2}{ d s^2}\PP_{\nu^{(s)}}}{ d \PP_{\nu^{(s)}}}(X_j)\bigg(\frac{ d \frac{ d}{ d s}\PP_{\nu^{(s)}}}{ d \PP_{\nu^{(s)}}}(X_j)\bigg)^2-6\sum_{j=1}^{n}\bigg(\frac{ d \frac{ d}{ d s}\PP_{\nu^{(s)}}}{ d \PP_{\nu^{(s)}}}(X_j)\bigg)^4\displaybreak[0]\\
&D^5\ell_n(\nu^{(s)})[v-v_0,v-v_0,v-v_0,v-v_0,v-v_0]\\
&=\sum_{j=1}^{n}\frac{ d \frac{ d^5}{ d s^5}\PP_{\nu^{(s)}}}{ d \PP_{\nu^{(s)}}}(X_j)-5\sum_{j=1}^{n}\frac{ d \frac{ d^4}{ d s^4}\PP_{\nu^{(s)}}}{ d \PP_{\nu^{(s)}}}(X_j)\frac{ d \frac{ d}{ d s}\PP_{\nu^{(s)}}}{ d \PP_{\nu^{(s)}}}(X_j)
\\
&\quad+20\sum_{j=1}^{n}\frac{ d \frac{ d^3}{ d s^3}\PP_{\nu^{(s)}}}{ d \PP_{\nu^{(s)}}}(X_j)\bigg(\frac{ d \frac{ d}{ d s}\PP_{\nu^{(s)}}}{ d \PP_{\nu^{(s)}}}(X_j)\bigg)^2-10\sum_{j=1}^{n}\frac{ d \frac{ d^3}{ d s^3}\PP_{\nu^{(s)}}}{ d \PP_{\nu^{(s)}}}(X_j)\frac{ d \frac{ d^2}{ d s^2}\PP_{\nu^{(s)}}}{ d \PP_{\nu^{(s)}}}(X_j)\\
&\quad-60\sum_{j=1}^{n}\frac{ d \frac{ d^2}{ d s^2}\PP_{\nu^{(s)}}}{ d \PP_{\nu^{(s)}}}(X_j)\bigg(\frac{ d \frac{ d}{ d s}\PP_{\nu^{(s)}}}{ d \PP_{\nu^{(s)}}}(X_j)\bigg)^3+30\sum_{j=1}^{n}\bigg(\frac{ d \frac{ d^2}{ d s^2}\PP_{\nu^{(s)}}}{ d \PP_{\nu^{(s)}}}(X_j)\bigg)^2\frac{ d \frac{ d}{ d s}\PP_{\nu^{(s)}}}{ d \PP_{\nu^{(s)}}}(X_j)\\
&\quad+24\sum_{j=1}^{n}\bigg(\frac{ d \frac{ d}{ d s}\PP_{\nu^{(s)}}}{ d \PP_{\nu^{(s)}}}(X_j)\bigg)^5.
\end{align*}

The previous quantities simply denote one-dimensional derivatives of the empirical log-likelihood along the curve $\nu^{(s)}$. These derivatives can be viewed as values on the diagonal of symmetric multilinear forms and by means of polarization we extend the derivatives to symmetric multilinear forms.

\subsection{Likelihood expansion}\label{sec:likexp}

In this section we will use a likelihood expansion to show the statement used in Section~\ref{pertu} that
\begin{align*}
&-t \sqrt n \int A_{\nu_0}(v-v_0) A_{\nu_0}(\eta)d\mathbb P_{\nu_0} + \ell_n(\nu)\allowdisplaybreaks[1]\\
&\qquad =\frac{t^2}{2}\|A_{\nu_0}(\eta)\|_{L^2(\mathbb P_{\nu_0})}^2  - \frac{t}{\sqrt n} \sum_{k=1}^n A_{\nu_0}(\eta)(X_k)+ \ell_n(\nu_t) + r_n'(\nu),
\end{align*}
where $\sup_{\nu\in D_{n,M}}|r_n'(\nu)|=o_{\mathbb P_{\nu_0}^{\mathbb N}}(1)$. Let $\eps_{n}^{L^p}$ with $2<p<\infty$ be rates such that for
\[ D_{n,p}=D_{n,p,M}:= \left\{\nu: v \in V_{B,J}, \|v - v_0\|_{L^p} \le M \eps_{n}^{L^p} \right\}\]
we have 
\[\Pi (D_{n,p}^c | X_1, \dots, X_n) \to^{\mathbb P^\mathbb N_{\nu_0}} 0.\]
For example we can take $(\eps_{n}^{L^p})^p=(\eps_{n}^{L^\infty})^{p-2}(\eps_{n}^{L^2})^2$.
Setting $\omega_n^{L^p}={t}{n}^{-1/2}\|\eta\|_{L^p}+\delta_n\eps_n^{L^p}$ we work under the following conditions.
\begin{assumption}\label{collection}
Let $\mathcal H_n \subset L^\infty(I)$.
Assume $J$, $\delta_n$, $\eps_n^{L^p}$ and $\omega_n^{L^p}$ satisfy uniformly over $\eta\in\mathcal H_n$
\begin{align*}
& 2^{-Js} =o(\eps_n^{L^2}),\quad 2^{-Js} =o(\eps_n^{L^\infty}),\quad\text{(bias conditions)}\\
&\sqrt{n}\delta_n \eps_n^{L^2} 2^{J/2}\sqrt{\log \frac{c}{\eps_n^{L^2}}} =o(1),\quad\text{(for term $II$)}\\
&\frac{2^{J/2}}{\sqrt{n}}\sqrt{\log \frac{c}{\eps_n^{L^2} }}\lesssim \eps_n^{L^2},\quad\text{(first term dominates in $II$)}\\
&n\delta_n\big(\eps_n^{L^2}\big)^2=o(1),\quad\text{(for centring of $III(ii)$)}\\
&t\|\eta\|_\infty\eps_n^{L^2}2^{J/2}\sqrt{\log\frac{c}{\eps_n^{L^2}}}=o(1),\quad\text{(for term $III(i)$)}\\
&\frac{t^2}{\sqrt{n}}\|\eta\|^2_{L^4}=o(1),\quad\text{(for deviation from mean of $IV(i)$)}\\
&t\delta_n\sqrt{n}\|\eta\|_{L^2}\eps_n^{L^2}=o(1),\quad\text{(for centring of $IV(iii)$)}\\
&n\,\omega_n^{L^3}\left(\eps_n^{L^3}\right)^2=o(1),\qquad
n\left(\omega_n^{L^3}\right)^3=o(1),\quad\text{(for centring of third derivative)}\\
&n\,\omega_n^{L^4}\left(\eps_n^{L^4}\right)^3=o(1),\qquad
n\left(\omega_n^{L^4}\right)^4=o(1),\quad\text{(for centring of fourth derivative)}\\
&n\left(\eps_n^{L^5}\right)^5=o(1),\qquad
n\left(\omega_n^{L^5}\right)^5=o(1),\quad\text{(for centring of fifth derivative)}\\
&\sqrt{n}\Big(\eps_n^{L^\infty}+\omega_n^{L^\infty}\Big)^2  \Big(\eps_n^{L^2}+\omega_n^{L^2}\Big) 2^{J/2}\bigg(\log \frac{c}{\eps_n^{L^2}+\omega_n^{L^2}}\bigg)^{1/2}=o(1),\quad\text{(for $R_n$)}\\
&\frac{1}{\sqrt{n}} 2^{J/2}\bigg(\log \frac{c}{\eps_n^{L^2}+\omega_n^{L^2}}\bigg)^{1/2}\lesssim \eps_n^{L^2}+\omega_n^{L^2}.\quad\text{ (first term dominates in $R_n$)}
\end{align*}
\end{assumption}

We consider the following path from $\nu_0$ to $\nu$, $s\mapsto\exp(s(v-v_0)+v_0)=\nu^{(s)}$. A Taylor expansion of the log-likelihood $\ell_n$ along this path gives
\begin{align*}
\ell_n(\nu)-\ell_n(\nu_0)=D\ell_n(\nu_0)[v-v_0]&+\tfrac1{2}D^2\ell_n(\nu_0)[v-v_0,v-v_0]\\
&+\tfrac1{6}D^3\ell_n(\nu^{(s)})[v-v_0,v-v_0,v-v_0],
\end{align*}
where the first two terms denote first and second derivative at zero and the last term denotes the third derivative at some intermediate point $s\in[0,1]$. We will see later that the derivatives depend linearly on the directions. Thus it is possible to extend them to symmetric multilinear forms.
The corresponding path from $\nu_0$ to $\nu_t=\exp(v_t)$ is $u\mapsto\exp(u(v_t-v_0)+v_0)=\nu_t^{(u)}$. 

We recall the perturbation~\eqref{pert} and define $\tilde \delta_n(v)$ by
\[
v_t = v + \delta_n \Big(\frac{t}{\delta_n \sqrt n} \eta + v_{0,J} -v  \Big) = v +\tilde \delta_n(v).
\]
With this definition we calculate
\begin{align*}
&\ell_n(\nu)-\ell_n(\nu_0)-(\ell_n(\nu_t)-\ell_n(\nu_0))\\
&\qquad\qquad=D\ell_n(\nu_0)[v-v_0]-D\ell_n(\nu_0)[v_t-v_0]
+\tfrac1{2}D^2\ell_n(\nu_0)[v-v_0,v-v_0]\\
&\qquad\qquad\quad-\tfrac1{2}D^2\ell_n(\nu_0)[v_t-v_0,v_t-v_0]+R_n\\
&\qquad\qquad=D\ell_n(\nu_0)[v-v_t]
+\tfrac1{2}D^2\ell_n(\nu_0)[v-v_0,v-v_0]\\
&\qquad\qquad\quad-\tfrac1{2}D^2\ell_n(\nu_0)[v-v_0+\tilde \delta_n(v),v-v_0+\tilde \delta_n(v)]+R_n\\
&\qquad\qquad=-D\ell_n(\nu_0)[(t/\sqrt{n})\eta]-\delta_n D\ell_n(\nu_0)[v_{0,J}-v]
-D^2\ell_n(\nu_0)[v-v_0,\tilde \delta_n(v)]\\
&\qquad\qquad\quad-\tfrac1{2}D^2\ell_n(\nu_0)[\tilde \delta_n(v),\tilde \delta_n(v)]+R_n\\
&\qquad\qquad=I+II+III+IV+R_n,
\end{align*}
where
\[R_n=\tfrac1{6}D^3\ell_n(\nu^{(s)})[v-v_0,v-v_0,v-v_0]-\tfrac1{6}D^3\ell_n(\nu_t^{(u)})[v_t-v_0,v_t-v_0,v_t-v_0]\]
with intermediate points $s,u\in[0,1]$.

We need to show that
\begin{align}
I+II+III+IV+R_n&=t \sqrt n \int A_{\nu_0}(v-v_0) A_{\nu_0}(\eta)d\mathbb P_{\nu_0}\nonumber\\
&\quad +\frac{t^2}{2}\|A_{\nu_0}(\eta)\|_{L^2(\mathbb P_{\nu_0})}^2  - \frac{t}{\sqrt n} \sum_{k=1}^n A_{\nu_0}(\eta)(X_k)+  r_n'(\nu).\label{123}
\end{align}
The first term is given by $I=-\frac{t}{\sqrt{n}}D\ell_n(\nu_0)[\eta]=-\frac{t}{\sqrt{n}}\sum_{k=1}^n A_{\nu_0}[\eta](X_k)$. 
For the second term we have
\begin{align*}
II=-\delta_n D\ell_n(\nu_0)[v_{0,J}-v]
=\sqrt{n}\delta_n\frac{1}{\sqrt{n}}\sum_{k=1}^n
A_{\nu_0}(v-v_{0,J})(X_k)
=\sqrt{n}\delta_n\mathbb G_n f_v, 
\end{align*}
where $\mathbb G_n=\sqrt{n}(\PP_{\nu_0,n}-\PP_{\nu_0})$ is the empirical process and 
\(f_v=A_{\nu_0}(v-v_{0,J}).\)

On $D_{n,M}$ we have $\|v-v_0\|_{L^2} \le M\eps_n^{L^2}$ and $\|v-v_0\|_{\infty} \le M\eps_n^{L^\infty}$. Using the usual bias bounds $\|v_{0,J}-v_0\|_{L^2}\lesssim 2^{-Js}$, $\|v_{0,J}-v_0\|_{\infty}\lesssim 2^{-Js}$ and the bias condition in Assumption~\ref{collection} we obtain $\|v-v_{0,J}\|_{L^2} \le M\eps_n^{L^2}$ and $\|v-v_{0,J}\|_{\infty} \le M\eps_n^{L^\infty}$ with a possibly larger constant $M$. 
We recall \(f_v=A_{\nu_0}(v-v_{0,J})\) and consider the finite dimensional class of functions
\begin{equation}\label{funcclass}
\F:=\left\{f_v: v\in V_{B,J}, \|v-v_{0,J}\|_{L^2} \le M\eps_n^{L^2}, \|v-v_{0,J}\|_{\infty} \le M\eps_n^{L^\infty} \right\}. 
\end{equation}
We observe that there is $D>0$ such that $\|v_0\|_\infty\le D$ and $\|v\|_\infty\le D$ for all $v\in V_{B,J}$. We will bound the norms of functions in $\F$ using the following lemma.
\begin{lemma}\label{sup}
Let $\|v\|_\infty\le D$ and $\nu=\exp(v)$.
Then for $A_\nu$ defined in \eqref{multform} and for $1\le p\le\infty$
\begin{align*}
\|A_{\nu}(w_1,\dots,w_k)\|_{L^p(\PP_\nu)}\lesssim \|w_1\|_{L^p(\nu)}\dots\|w_k\|_{L^p(\nu)}.
\end{align*}
The constants only depends on $k$, $D$ and $\Delta$.
\end{lemma}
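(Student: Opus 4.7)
The plan is to exploit the explicit multilinear structure in~\eqref{multform}. Write $\sigma_i := w_i\nu - (\int w_i\,d\nu)\delta_0$ so that, by definition, $A_\nu(w_1,\dots,w_k) = \Delta^k\, d(\sigma_1 * \dots * \sigma_k * \PP_\nu)/d\PP_\nu$. The hypothesis $\|v\|_\infty \le D$ yields the pointwise bounds $e^{-D} \le \nu \le e^D$ on $I$; in particular, $\nu(I) \le e^D$, and the absolutely continuous part of $\PP_\nu$ has Lebesgue density $q(x) = e^{-\Delta\nu(I)}\sum_{j\ge 1}\Delta^j\nu^{\ast j}(x)/j!$ which is bounded below by a positive constant $q_0 = q_0(D,\Delta)>0$ (already from the $j=1$ term). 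Together with the fact that $\PP_\nu$ is a probability measure, this will allow me to freely interchange $L^p(\PP_\nu)$ with $L^p(\Lambda)$ for absolutely continuous integrands up to constants depending only on $D$ and $\Delta$.

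Next I expand $\sigma_1 * \dots * \sigma_k$ by multilinearity into $2^k$ terms indexed by subsets $S \subseteq \{1,\dots,k\}$:
\begin{equation*}
\sigma_1 * \dots * \sigma_k \;=\; \sum_{S \subseteq \{1,\dots,k\}} (-1)^{|S|} \Big(\prod_{i\in S}\int w_i\,d\nu\Big)\; *_{j\notin S}(w_j\nu),
\end{equation*}
with the convention that the empty convolution is $\delta_0$. The term $S = \{1,\dots,k\}$ reduces after convolution with $\PP_\nu$ and division by $\PP_\nu$ to the constant $(-1)^k\prod_i\int w_i\,d\nu$, whose $L^p(\PP_\nu)$-norm is bounded, via H\"older's inequality and $\nu(I)\le e^D$, by $\prod_i\|w_i\|_{L^1(\nu)} \le \nu(I)^{k(1-1/p)}\prod_i\|w_i\|_{L^p(\nu)}$. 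For each $S \subsetneq \{1,\dots,k\}$ the partial convolution $*_{j\notin S}(w_j\nu)$ is absolutely continuous, and iterated Young's inequality $\|f*g\|_{L^p(\Lambda)} \le \|f\|_{L^p(\Lambda)}\|g\|_{L^1(\Lambda)}$, combined with $\|w_j\nu\|_{L^p(\Lambda)} \le e^{D(p-1)/p}\|w_j\|_{L^p(\nu)}$, $\|w_j\nu\|_{L^1(\Lambda)} \le \nu(I)^{1-1/p}\|w_j\|_{L^p(\nu)}$, and $\|\PP_\nu\|_{TV} = 1$, yields that the Lebesgue density of $\bigl(*_{j\notin S}(w_j\nu)\bigr)*\PP_\nu$ has $L^p(\Lambda)$-norm at most $C_{D,\Delta,p}\prod_{j\notin S}\|w_j\|_{L^p(\nu)}$.

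To pass from $L^p(\Lambda)$ to $L^p(\PP_\nu)$ for the density of each such $S\subsetneq[k]$ term divided by $\PP_\nu$, I observe that the partial convolutions above produce no atom at $\{0\}$ (hence the atomic contribution vanishes) and that on the continuous part $1/q \le 1/q_0$ is bounded. Combined with the H\"older estimate $\prod_{i\in S}|\int w_i\,d\nu| \le \nu(I)^{|S|(1-1/p)}\prod_{i\in S}\|w_i\|_{L^p(\nu)}$, every one of the $2^k$ terms is controlled by $C_{D,\Delta,p,k}\prod_{i=1}^k\|w_i\|_{L^p(\nu)}$, and summing finishes the proof for $1 \le p < \infty$; the endpoint $p=\infty$ is handled identically, using $\|f*g\|_\infty \le \|f\|_\infty\|g\|_{L^1}$ and $\|w\|_{L^\infty(\nu)} = \|w\|_\infty$. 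The main obstacle is bookkeeping rather than depth: the careful separation of the $\delta_0$-contributions in the expansion and the conversion between $L^p(\Lambda)$, $L^p(\nu)$ and $L^p(\PP_\nu)$ without losing the product structure over the factors $w_i$. The two-sided bound on $\nu$ furnished by $\|v\|_\infty\le D$ is precisely what renders all these norms equivalent up to constants depending only on $k$, $D$, and $\Delta$.
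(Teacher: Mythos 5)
Your proof is correct and is essentially the paper's own argument: both expand the multilinear form into the $2^k$ terms of the form $\prod_{i\in S}\int w_i\,d\nu\cdot\bigl(\ast_{j\notin S}(w_j\nu)\bigr)\ast\PP_\nu$, control the scalar factors by H\"older in $L^p(\nu)$, apply Young's inequality to the convolution part in $L^p(\Lambda)$, and pass between $L^p(\Lambda)$ and $L^p(\PP_\nu)$ using the two-sided bounds on $\nu$ (hence on the density of the absolutely continuous part of $\PP_\nu$). Your slightly more explicit treatment of the purely atomic term $S=\{1,\dots,k\}$ and of the absence of an atom in the partial convolutions is a fair bookkeeping refinement, not a different method.
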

\begin{proof}
We write $\nu$ for both the L\'evy measure and its density.
The measure $\PP_{\nu}$ can be written as a convolution exponential $\PP_{\nu}=e^{-\Delta\lambda}\sum_{k=0}^\infty \frac{\Delta^k}{k!}\nu^{*k}$ with intensity $\lambda=\nu((-1/2,1/2])$. The function $v$ is bounded such that the corresponding L\'evy density $\nu=\exp(v)$ is bounded from above and bounded away from zero. Likewise the intensity~$\lambda$ is bounded from above and bounded away from zero. We denote by $\Lambda$ the Lebesgue measure on $[-1/2,1/2]$. Then $\frac{ d \Lambda}{ d\PP_{\nu}}$ is in $L^\infty(\PP_\nu)$ with norm bounded by a constant depending on $D$ and $\Delta$ only. Defining by $\PP_{\nu}^a=e^{-\Delta\lambda}\sum_{k=1}^\infty \frac{\Delta^k}{k!}\nu^{*k}$ the absolutely continuous part with respect to the Lebesgue measure $\Lambda$ we see likewise that the density $\frac{ d\PP_{\nu}^a}{ d \Lambda}$ is bounded in $L^\infty(\Lambda)$ from above depending on $D$ and $\Delta$ only.
By definition we have
\begin{align*}
&\|A_\nu(w_1,\dots,w_k)\|_{L^p(\PP_\nu)}\\
&\le\Delta^k\left\|\frac{ d((w_1\nu-\delta_0\int w_1 d\nu)*\dots*(w_k\nu-\delta_0\int w_k d\nu)*\PP_\nu)}{ d\PP_\nu}\right\|_{L^p(\PP_\nu)}.
\end{align*}
The nominator consists of $2^k$ terms and a typical term is of the from
\begin{align*}
\int w_1 d\nu\dots\int w_j d \nu \cdot (w_{j+1}\nu)*\dots*(w_k\nu)*\PP_\nu
\end{align*}
and up to permutation and choice of $j$ between 0 and $k$ all terms are of this form. So it suffices to bound
\begin{align*}
&\left\|\frac{ d(\int w_1 d\nu\dots\int w_j d \nu \cdot (w_{j+1}\nu)*\dots*(w_k\nu)*\PP_\nu)}{ d\PP_\nu}\right\|_{L^p(\PP_\nu)}\\
&\lesssim \|w_1\|_{L^1(\nu)}\dots\| w_j\|_{L^1(\nu)}\left\|\frac{ d( (w_{j+1}\nu)*\dots*(w_k\nu)*\PP_\nu)}{ d\PP_\nu}\right\|_{L^p(\PP_\nu)}\\
&\lesssim \|w_1\|_{L^p(\nu)}\dots\| w_j\|_{L^p(\nu)}\left\|\frac{ d( (w_{j+1}\nu)*\dots*(w_k\nu)*\PP_\nu)}{ d\PP_\nu}\right\|_{L^p(\PP_\nu)}.
\end{align*}
For $j=k$ this gives the desired bound and for $j<k$ the previous line can be bounded by
\begin{align*}
& \|w_1\|_{L^p(\nu)}\dots\| w_j\|_{L^p(\nu)}\left\|\frac{ d( (w_{j+1}\nu)*\dots*(w_k\nu)*\PP_\nu)}{ d\Lambda}\right\|_{L^p(\PP_\nu)}\left\|\frac{ d\Lambda}{ d\PP_\nu}\right\|_{L^\infty(\PP_\nu)}\allowdisplaybreaks[1]\\
&\lesssim \|w_1\|_{L^p(\nu)}\dots\| w_j\|_{L^p(\nu)}\left\|\frac{ d( (w_{j+1}\nu)*\dots*(w_k\nu)*\PP_\nu)}{ d\Lambda}\right\|_{L^p(\Lambda)},
\end{align*}
where we have used boundedness of $\frac{ d \Lambda}{ d\PP_{\nu}}$ and $\frac{ d\PP_{\nu}^a}{ d \Lambda}$. Young's inequality for convolutions yields the bound
\begin{align*}
&\|w_1\|_{L^p(\nu)}\dots\| w_j\|_{L^p(\nu)}\left\|w_{j+1}\nu\right\|_{L^1(\Lambda)}\dots\left\|w_{k-1}\nu\right\|_{L^1(\Lambda)}\left\|w_{k}\nu\right\|_{L^p(\Lambda)}\\
&\lesssim
\|w_1\|_{L^p(\nu)}\dots\left\|w_{k}\right\|_{L^p(\nu)}
\end{align*}
and the lemma follows by treating all $2^k$ terms in this way.
\end{proof}

We define $v(u)=\sum_{l\le J-1}\sum_{k}a_l u_{lk}\psi_{lk} $ with $a_l=2^{-l}(l^2+1)^{-1}$.
For $u,u'\in \R^{2^J}$ we denote $v=v(u), v'=v(u')$. 
Applying Lemma~\ref{sup} with $w_1=v-v'$ yields 
$\|f_v-f_{v'}\|_\infty\lesssim \|v-v'\|_\infty\lesssim \|u-u'\|_\infty$, where the constant only depends on $D$ and $\Delta$. It follows that $\sup_{\Q}\|f_v-f_{v'}\|_{L^2(\Q)}\lesssim\|u-u'\|_{\infty}$, where the supremum is over all Borel probability measures $\Q$. Consequently we have $\sup_{\Q} N(\F,L^2(\Q),\eps \|F\|_{L^2(\Q)})\le (A/\eps)^{2^J}$, for some $A\ge2$ and for $0<\eps< A$ and where the envelope can be taken as a constant function $F$ with constant only depending on $D$ and $\Delta$.

Let $\sigma^2=\sup_{f\in\F}\PP_{\nu_0} f^2$. Lemma~\ref{sup} yields 
\begin{align*}
\sigma\le \sup_{\|v-v_{0,J}\|\le M \eps_{n}^{L^2}}\|A_{\nu_0}(v-v_{0,J})\|_{L^2(\PP_{\nu_0})}
\lesssim\sup_{\|v-v_{0,J}\|\le M \eps_{n}^{L^2}}\|v-v_{0,J}\|_{L^2({\nu_0})}\lesssim \eps_n^{L^2}.
\end{align*}
Then we have by Corollary~3.5.8 in \cite{GineNickl2016} for some $c>0$
\begin{align*}
E\|\mathbb G_n\|_{\F}\lesssim \eps_n^{L^2} 2^{J/2}\sqrt{\log \frac{c}{\eps_n^{L^2}}}
+ \frac{1}{\sqrt n} 2^{J}\log \frac{c}{\eps_n^{L^2}}.
\end{align*}
We obtain $II=o_{\PP}(1)$ using the conditions
\begin{align*}
\sqrt{n}\delta_n \eps_n^{L^2} 2^{J/2}\sqrt{\log \frac{c}{\eps_n^{L^2}}} =o(1)
\quad
\text{ and }
\quad
\frac{2^{J/2}}{\sqrt{n}}\sqrt{\log \frac{c}{\eps_n^{L^2} }}\lesssim \eps_n^{L^2}.
\end{align*}

Next we consider the term $III$. It equals
\begin{align*}
&\quad-D^2\ell_n(\nu_0)[v-v_0,\tilde \delta_n(v)]
=\underbrace{-{n}^{-1/2}tD^2\ell_n(\nu_0)[v-v_0,\eta]}_{(i)}\underbrace{+\delta_n D^2\ell_n(\nu_0)[v-v_0,v-v_{0,J}]}_{(ii)}\allowdisplaybreaks[0]\\
&=\underbrace{-\frac{t}{\sqrt{n}}\sum_{j=1}^{n}\frac{ d \frac{ d^2}{ d s^2}\PP_{\nu^{(s)}}}{ d \PP_{\nu^{(s)}}}\bigg|_{s=0}[v-v_0,\eta](X_j)}_{(i)(a)}\underbrace{+\frac{t}{\sqrt{n}}\sum_{j=1}^{n}\left(\frac{ d \frac{ d}{ d s}\PP_{\nu^{(s)}}}{ d \PP_{\nu^{(s)}}}\right)^2\bigg|_{s=0}[v-v_0,\eta](X_j)}_{(i)(b)}\\
&\underbrace{+\delta_n \sum_{j=1}^{n}\frac{ d \frac{ d^2}{ d s^2}\PP_{\nu^{(s)}}}{ d \PP_{\nu^{(s)}}}\bigg|_{s=0}[v-v_0,v-v_{0,J}](X_j)}_{(ii)(a)}\underbrace{-\delta_n \sum_{j=1}^{n}\left(\frac{ d \frac{ d}{ d s}\PP_{\nu^{(s)}}}{ d \PP_{\nu^{(s)}}}\right)^2\bigg|_{s=0}[v-v_0,v-v_{0,J}](X_j)}_{(ii)(b)},
\end{align*}
where we understand the bilinear forms through polarization and by abuse of notation $\nu^{(s)}$ denotes a generic path.

The terms $(i)(a)$ and $(ii)(a)$ are both centred. The term $(i)(b)$ is centred after subtracting
\begin{align*}
&\sqrt{n}\,t \int A_{\nu_0}(v-v_0)A_{\nu_0}(\eta) d\PP_{\nu_0}
\end{align*}
yielding the corresponding term in \eqref{123}.
The centring of the term $(ii)(b)$ is of order
\begin{align*}
&\delta_n n \bigg|\int A_{\nu_0}(v-v_0)A_{\nu_0}(v-v_{0,J}) d\PP_{\nu_0}\bigg|\\
&\lesssim \delta_n n \left(E_{\nu_0}\left[(A_{\nu_0}(v-v_0))^2\right]\right)^{1/2}\left(E_{\nu_0}\left[(A_{\nu_0}(v-v_{0,J}))^2\right]\right)^{1/2}\\
&\lesssim  \delta_n n \|v-v_0\|_{L^2(\nu_0)}\|v-v_{0,J}\|_{L^2(\nu_0)}
\lesssim \delta_n n (\eps_n^{L^2})^2=o(1).
\end{align*}

We start with the term $(i)(a)$. We define functions 
\begin{align*}
f_v&=A_{\nu_0}((v-v_0)\eta)+ A_{\nu_0}(v-v_0,\eta)
\end{align*}
and consider the corresponding class of functions as in (\ref{funcclass}).
For $u,u'\in \R^{2^J}$ we denote again $v=v(u), v'=v(u')$ and apply Lemma~\ref{sup} to the function $f_v-f_{v'}$.
This yields
\begin{align*}
\|f_{v}-f_{v'}\|_\infty 
\lesssim \|\eta\|_\infty \|v-v'\|_\infty \lesssim  \|\eta\|_\infty \|u-u'\|_\infty,
\end{align*}
where the constant only depends on $D$ and $\Delta$. We choose the envelope $F$ of the class $\F$ as a constant function $C\|\eta\|_\infty$, where the constant $C$ depends only on $D$ and $\Delta$. Then the bound \(\|f_{v}-f_{v'}\|_\infty \lesssim  \|\eta\|_\infty \|u-u'\|_\infty\) shows that we have $\sup_{\Q}N(\F,L^2(\Q),\eps\|F\|_{L^2(\Q)})\le(A/\eps)^{2^J}$ for some $A\ge2$ and for all $0<\eps<A$.

The next step is to bound $\sigma^2=\sup_{f\in\F}\PP_{\nu_0}f^2$. By Lemma~\ref{sup} we have
\begin{align*}
\sigma= \sup_{f\in\F}\|f\|_{L^2(\PP_{\nu_0})}
\lesssim \|\eta\|_\infty \eps_n^{L^2}.
\end{align*}

Corollary 3.5.8 in \cite{GineNickl2016} allows to bound the empirical process appearing in term $(i)(a)$.  For some $c>0$ we obtain
\begin{align*}
E\|\mathbb G_n\|_{\F}\lesssim \|\eta\|_\infty \: \eps_n^{L^2} 2^{J/2}\sqrt{\log \frac{c}{\eps_n^{L^2}}}
+ \frac{1}{\sqrt n} \|\eta\|_\infty 2^{J}\log \frac{c}{\eps_n^{L^2}}.
\end{align*}
The conditions for the first term dominating the second term is the same as for the term $II$. To bound the term $(i)(a)$ we use
\begin{align*}
t \|\eta\|_\infty \: \eps_n^{L^2} 2^{J/2}\sqrt{\log \frac{c}{\eps_n^{L^2}}}=o(1).
\end{align*}

Next we treat term $(i)(b)$, which is given by
\begin{align*}
\frac{t}{\sqrt{n}}\sum_{j=1}^{n} A_{\nu_0}(v-v_0)(X_j) A_{\nu_0}(\eta)(X_j).
\end{align*}
We define $g_v=A_{\nu_0}(v-v_0) A_{\nu_0}(\eta)$ and $f_v=g_v-E_{\nu_0}[g_v]$. 
So after centring the term is given by $t\mathbb G_n f_v$.
We have by Lemma~\ref{sup}
\begin{align*}
 \|g_v-g_{v'}\|_\infty = \|A_{\nu_0}(v-v')A_{\nu_0}(\eta)\|_\infty
&\le \|A_{\nu_0}(v-v')\|_\infty\|A_{\nu_0}(\eta)\|_\infty\\
&\lesssim \|v-v'\|_\infty \|\eta\|_\infty
\end{align*}
and thus also $ \|f_v-f_{v'}\|_\infty \lesssim \|v-v'\|_\infty \|\eta\|_\infty$. We consider the class of functions $\F$ as in \eqref{funcclass} corresponding to the functions of the form $f_v$ here and bound
\begin{align*}
\sigma&=\sup_{f\in\F} \|f\|_{L^2(\PP_{\nu_0})}
\le \sup_{\|v-v_0\|_{L^2}\le 2M \eps_n^{L^2}} \|g_v\|_{L^2(\PP_{\nu_0})}\\
&\le \sup_{\|v-v_0\|_{L^2}\le 2M \eps_n^{L^2}} \| A_{\nu_0}(\eta)\|_\infty \|A_{\nu_0}(v-v_0)\|_{L^2(\PP_{\nu_0})}
\lesssim  \|\eta\|_\infty \eps_n^{L^2}.
\end{align*}
Just as for term $(i)(a)$ we apply now Corollary 3.5.8 in \cite{GineNickl2016} with envelop proportional to $\|\eta\|_\infty$. So the conditions for term $(ii)(b)$ are the same as for the term $(i)(a)$.

We move on to the term $(ii)(a)$. We define
\begin{align*}
f_{v v'}&=A_{\nu_0}((v-v_0)(v'-v_{0,J}))+ A_{\nu_0}(v-v_0,v'-v_{0,J})
\end{align*}
and $f_v=f_{v v}$. We now consider the class of functions $\F$ with this definition of $f_v$.
Then we have
\begin{align*}
\|f_v-f_{v'}\|_\infty&\lesssim  \|f_{vv}-f_{vv'}\|_\infty + \|f_{vv'}-f_{v'v'}\|_\infty
\lesssim \eps_n^{L^\infty} \|v-v'\|_{\infty}.
\end{align*}
Choosing the envelope as a constant function proportional to $\eps_n^{L^\infty}$ we obtain for the covering numbers
$\sup_{\Q}N(\F,L^2(\Q),\eps\|F\|_{L^2(\Q)})\le(A/\eps)^{2^J}$. Turning to $\sigma$ we see
\begin{align*}
\sigma =\sup_{f\in\F}\|f\|_{L^2(\PP_{\nu_0})}\lesssim \eps_n^{L^\infty} \eps_n^{L^2}.
\end{align*}
Again we apply Corollary 3.5.8 in \cite{GineNickl2016}, which gives the following bound for term $(ii)(a)$
\begin{align*}
\delta_n\sqrt{n}E\|\mathbb G_n\|\lesssim \delta_n\sqrt{n} \eps_n^{L^\infty} \eps_n^{L^2} 2^{J/2}\sqrt{\log \frac{c}{\eps_n^{L^2}}}
+ \delta_n \eps_n^{L^\infty} 2^{J}\log \frac{c}{\eps_n^{L^2}}.
\end{align*}
This tends to zero by the assumption for the term $II$.

The only remaining term of $III$ is $(ii)(b)$. This term takes the from
\begin{align*}
-\delta_n \sum_{j=1}^{n}A_{\nu_0}(v-v_0)(X_j)A_{\nu_0}(v-v_{0,J})(X_j).
\end{align*}
With the definitions $g_{vv'}=A_{\nu_0}(v-v_0)A_{\nu_0}(v'-v_{0,J})$ and $f_{v}=g_{vv}-E_{\nu_0}[g_{vv}]$ the term $(ii)(b)$ can be written after centring as $-\delta_n \sqrt{n}\mathbb G_n f_{v}$ and we bound
\begin{align*}
\|g_{vv}-g_{v'v'}\|_\infty 
&\le \|g_{vv}-g_{vv'}\|_\infty + \|g_{vv'}-g_{v'v'}\|_\infty\\
&\lesssim \|v-v_0\|_\infty \|v-v'\|_\infty + \|v-v'\|_\infty \|v'-v_{0,J}\|_\infty
\lesssim \eps_n^{L^\infty} \|v-v'\|_\infty.
\end{align*}
Consequently we also have $\|f_{v}-f_{v'}\|_\infty \lesssim \eps_n^{L^\infty} \|v-v'\|_\infty$. We denote by $\F$ the class of functions corresponding to $f_v$ as in \eqref{funcclass} and further bound
\begin{align*}
\sigma&=\sup_{f\in\F} \|f\|_{L^2(\PP_{\nu_0})}
\le  \sup\{\|g_{vv}\|_{L^2(\PP_{\nu_0})}:\|v-v_0\|_{L^2}\le M \eps_n^{L^2},\|v-v_0\|_{L^\infty}\le M \eps_n^{L^\infty}\}\\
&\le  \sup\{\| A_{\nu_0}(v-v_0)\|_\infty \|A_{\nu_0}(v-v_{0,J})\|_{L^2(\PP_{\nu_0})}:\|v-v_0\|_{L^2}\le M \eps_n^{L^2},\|v-v_0\|_{L^\infty}\le M \eps_n^{L^\infty}\}\\
&\lesssim  \eps_n^{L^\infty} \eps_n^{L^2}.
\end{align*}
We see that $(ii)(b)$ leads to the same condition as the term $(ii)(a)$.

The term $IV$ equals
\begin{align*}
& \underbrace{-\frac{t^2}{2n}D^2\ell_n(\nu_0)[\eta,\eta]}_{(i)}
\underbrace{-\frac{\delta_n^2}{2}D^2\ell_n(\nu_0)[v-v_{0,J},v-v_{0,J}]}_{(ii)}
\underbrace{+\frac{t\delta_n}{\sqrt{n}}D^2\ell_n(\nu_0)[\eta,v-v_{0,J}]}_{(iii)}\\
&=\underbrace{-\frac{t^2}{2n}\sum_{j=1}^{n}\frac{ d \frac{ d^2}{ d s^2}\PP_{\nu^{(s)}}}{ d \PP_{\nu^{(s)}}}\bigg|_{s=0}[\eta,\eta](X_j)}_{(i)(a)}\underbrace{+\frac{t^2}{2n}\sum_{j=1}^{n}\bigg(\frac{ d \frac{ d}{ d s}\PP_{\nu^{(s)}}}{ d \PP_{\nu^{(s)}}}\bigg|_{s=0}[\eta](X_j)\bigg)^2}_{(i)(b)}\\
&\underbrace{-\frac{\delta_n^2}{2} \sum_{j=1}^{n}\frac{ d \frac{ d^2}{ d s^2}\PP_{\nu^{(s)}}}{ d \PP_{\nu^{(s)}}}\bigg|_{s=0}[v-v_{0,J},v-v_{0,J}](X_j)}_{(ii)(a)}\underbrace{+\frac{\delta_n^2}{2} \sum_{j=1}^{n}\bigg(\frac{ d \frac{ d}{ d s}\PP_{\nu^{(s)}}}{ d \PP_{\nu^{(s)}}}\bigg|_{s=0}[v-v_{0,J}](X_j)\bigg)^2}_{(ii)(b)}\\
&\underbrace{+\frac{t\delta_n}{\sqrt{n}}\sum_{j=1}^{n}\frac{ d \frac{ d^2}{ d s^2}\PP_{\nu^{(s)}}}{ d \PP_{\nu^{(s)}}}\bigg|_{s=0}[\eta,v-v_{0,J}](X_j)}_{(iii)(a)}\underbrace{-\frac{t\delta_n}{\sqrt{n}}\sum_{j=1}^{n}\bigg(\frac{ d \frac{ d}{ d s}\PP_{\nu^{(s)}}}{ d \PP_{\nu^{(s)}}}\bigg)^2\bigg|_{s=0}[\eta,v-v_{0,J}](X_j)}_{(iii)(b)}.
\end{align*}
The terms $(i)(a)$, $(ii)(a)$ and $(iii)(a)$ are centred. The term $(i)(b)$ can be centred by subtracting $$\frac{t^2}{2}\|A_{\nu_0}(\eta)\|_{L^2(\mathbb P_{\nu_0})}^2 $$ and gives the corresponding expression in \eqref{123}. For the centring of term $(ii)(b)$ we subtract
\begin{align*}
\frac{\delta_n^2 n}{2}\|A_{\nu_0}(v-v_{0,J})\|^2_{L^2(\mathbb P_{\nu_0})}
\lesssim \frac{\delta_n^2 n}{2}\|v-v_{0,J}\|^2_{L^2(\nu_0)}
\lesssim \delta_n^2 n (\eps_n^{L^2})^2=o(1).
\end{align*}
To centre the term $(iii)(b)$ we add $t\delta_n\sqrt{n}E_{\nu_0}[A_{\nu_0}(\eta)A_{\nu_0}(v-v_{0,J})]$ and this is bounded in absolute value by
\begin{align*}
&|t\delta_n\sqrt{n}E_{\nu_0}[A_{\nu_0}(\eta)A_{\nu_0}(v-v_{0,J})]|
\lesssim t \delta_n \sqrt{n} \|A_{\nu_0}(\eta)\|_{L^2(\PP_{\nu_0})} \|A_{\nu_0}(v-v_{0,J})\|_{L^2(\PP_{\nu_0})}\\
&\lesssim t \delta_n \sqrt{n} \|\eta\|_{L^2(\nu_0)} \|v-v_{0,J}\|_{L^2(\nu_0)}
\lesssim t \delta_n \sqrt{n} \|\eta\|_{L^2}\eps_n^{L^2}=o(1).
\end{align*}

For term $(i)(a)$ we bound using Lemma~\ref{sup}
\begin{align*}
E_{\nu_0}\left[ \left(\frac{ d \frac{ d^2}{ d s^2}\PP_{\nu^{(s)}}}{ d \PP_{\nu^{(s)}}}\bigg|_{s=0}[\eta,\eta]\right)^2\right]
&\lesssim \|A_{\nu_0}\eta^2\|^2_{L^2(\PP_{\nu_0})}+\left\|A_{\nu_0}(\eta,\eta)\right\|^2_{L^2(\PP_{\nu_0})}\\
&\lesssim \|\eta^2\|^2_{L^2(\nu_0)}+\|\eta\|^4_{L^2(\nu_0)}\lesssim \|\eta\|^4_{L^4}
\end{align*}
and for term $(i)(b)$ we bound using Lemma~\ref{sup} 
\begin{align*}
E_{\nu_0}\left[(A_{\nu_0}(\eta))^4\right]= \left\|A_{\nu_0}(\eta)\right\|^4_{L^4(\PP_{\nu_0})}\lesssim  \left\|\eta\right\|^4_{L^4}.
\end{align*}
So after centring term $(i)$ is of order $O_{\PP}(t^2 n^{-1/2}\|\eta\|^2_{L^4})$ and we use $t^2 n^{-1/2}\|\eta\|^2_{L^4}=o(1)$.

The terms $IV(ii)$ and $IV(iii)$ are treated in the same way as the terms $III(ii)$ and $III(i)$, respectively. Since the terms $IV(ii)$ and $IV(iii)$ both have an additional factor $\delta_n$, no extra condition is needed.

The remainder term can be expressed as
\begin{align*}
R_n&=\tfrac1{3!}D^3\ell_n(\nu_0)[v-v_0,v-v_0,v-v_0]-\tfrac1{3!}D^3\ell_n(\nu_0)[v_t-v_0,v_t-v_0,v_t-v_0]\\
&\quad+\tfrac1{4!}D^4\ell_n(\nu_0)[v-v_0,v-v_0,v-v_0,v-v_0]\\
&\quad-\tfrac1{4!}D^4\ell_n(\nu_0)[v_t-v_0,v_t-v_0,v_t-v_0,v_t-v_0]\\
&\quad+\tfrac1{5!}D^5\ell_n(\nu^{(s)})[v-v_0,v-v_0,v-v_0,v-v_0,v-v_0]\\
&\quad-\tfrac1{5!}D^5\ell_n(\nu_t^{(u)})[v_t-v_0,v_t-v_0,v_t-v_0,v_t-v_0,v_t-v_0]\displaybreak[0]\\
&=-\tfrac3{3!}D^3\ell_n(\nu_0)[\tilde \delta_n(v),v-v_0,v-v_0]-\tfrac3{3!}D^3\ell_n(\nu_0)[\tilde \delta_n(v),\tilde \delta_n(v),v-v_0]\\
&\quad-\tfrac1{3!}D^3\ell_n(\nu_0)[\tilde \delta_n(v),\tilde \delta_n(v),\tilde \delta_n(v)]\\
&\quad-\tfrac4{4!}D^4\ell_n(\nu_0)[\tilde \delta_n(v),v-v_0,v-v_0,v-v_0]\\
&\quad-\tfrac6{4!}D^4\ell_n(\nu_0)[\tilde \delta_n(v),\tilde \delta_n(v),v-v_0,v-v_0]\\
&\quad-\tfrac4{4!}D^4\ell_n(\nu_0)[\tilde \delta_n(v),\tilde \delta_n(v),\tilde \delta_n(v),v-v_0]\\
&\quad-\tfrac1{4!}D^4\ell_n(\nu_0)[\tilde \delta_n(v),\tilde \delta_n(v),\tilde \delta_n(v),\tilde \delta_n(v)]\\
&\quad+\tfrac1{5!}D^5\ell_n(\nu^{(s)})[v-v_0,v-v_0,v-v_0,v-v_0,v-v_0]\\
&\quad-\tfrac1{5!}D^5\ell_n(\nu_t^{(u)})[v_t-v_0,v_t-v_0,v_t-v_0,v_t-v_0,v_t-v_0].
\end{align*}

We start with the centring of the third derivatives. So the aim is to bound $E_{\nu_0}[|D^3\ell_n(\nu_0)[w_1,w_2,w_3]|]$.
\begin{align*}
& D^3\ell_n(\nu_0)[w,w,w]\\
&=\underbrace{\sum_{j=1}^{n}\frac{ d \frac{ d^3}{ d r^3}\PP_{\nu^{(r)}}}{ d \PP_{\nu^{(r)}}}\bigg|_{r=0}(X_j)}_{(a)}\underbrace{-3\sum_{j=1}^{n}\frac{ d \frac{ d^2}{ d r^2}\PP_{\nu^{(r)}}}{ d \PP_{\nu^{(r)}}}\bigg|_{r=0}(X_j)\frac{ d \frac{ d}{ d r}\PP_{\nu^{(r)}}}{ d \PP_{\nu^{(r)}}}\bigg|_{r=0}(X_j)}_{(b)}\\
&\quad \underbrace{+2\sum_{j=1}^{n}\bigg(\frac{ d \frac{ d}{ d r}\PP_{\nu^{(r)}}}{ d \PP_{\nu^{(r)}}}\bigg|_{r=0}(X_j)\bigg)^3}_{(c)}.
\end{align*}
The term $(a)$ is centred. For term $(b)$ we calculate using H\"older's inequality
\begin{align*}
&E_{\nu_0}[|(A_{\nu_0}(w_1w_2)+A_{\nu_0}(w_1,w_2))A_{\nu_0}(w_3)|]\\
&\le \|A_{\nu_0}(w_1w_2)+A_{\nu_0}(w_1,w_2)\|_{L^{3/2}(\PP_{\nu_0})}  \|A_{\nu_0}(w_3)\|_{L^{3}(\PP_{\nu_0})}\\
&\lesssim (\|w_1w_2\|_{L^{3/2}({\nu_0})}+\|w_1\|_{L^{3/2}({\nu_0})}\|w_2\|_{L^{3/2}({\nu_0})})  \|w_3\|_{L^{3}({\nu_0})}\\
&\lesssim \|w_1\|_{L^{3}}\|w_2\|_{L^{3}}\|w_3\|_{L^{3}}
\end{align*}
and for term $(c)$ we likewise obtain
\begin{align*}
E_{\nu_0}[|A_{\nu_0}(w_1)A_{\nu_0}(w_2)A_{\nu_0}(w_3)|]
\lesssim \|w_1\|_{L^{3}}\|w_2\|_{L^{3}}\|w_3\|_{L^{3}}.
\end{align*}
We conclude 
\[
E_{\nu_0}\left[|D^3\ell_n(\nu_0)[w_1,w_2,w_3]|\right]\lesssim \|w_1\|_{L^{3}}\|w_2\|_{L^{3}}\|w_3\|_{L^{3}}.
\]
Using Lemma~\ref{sup} and the generalization of H\"older's inequality $\|\prod_{j=1}^k f_j\|_{L^1(\mu)}\le\prod_{j=1}^k\|f_j\|_{L^{p_j}(\mu)}$ for $\sum_{j=1}^k\frac1{p_j}=1$ and some measure $\mu$, it follows in the same way that 
\begin{align*}
E_{\nu_0}\left[|D^4\ell_n(\nu_0)[w_1,w_2,w_3,w_4]|\right]&\lesssim \|w_1\|_{L^{4}}\|w_2\|_{L^{4}}\|w_3\|_{L^{4}}\|w_4\|_{L^{4}}.
\end{align*}
For the fifth derivative we let $\tilde\nu$ be either $\nu^{(s)}$ or $\nu_t^{(u)}$ and first apply a measure change
\begin{align*}
E_{\nu_0}\left[|D^5\ell_n(\tilde\nu)[w_1,w_2,w_3,w_4]|\right]&\lesssim E_{\tilde\nu}\left[|D^5\ell_n(\tilde\nu)[w_1,w_2,w_3,w_4]|\right]\\
&\lesssim \|w_1\|_{L^{5}}\|w_2\|_{L^{5}}\|w_3\|_{L^{5}}\|w_4\|_{L^{5}}\|w_5\|_{L^{5}}.
\end{align*}
We observe that \[\omega_n^{L^p}=\frac{t}{\sqrt{n}}\|\eta\|_{L^p}+\delta_n\eps_n^{L^p}\] is the rate at which $\tilde \delta_n(v)$ converges to zero in $L^p$. For the centring of the third, fourth and fifth derivative we use the following conditions
\begin{align*}
n\,\omega_n^{L^3}\left(\eps_n^{L^3}\right)^2&=o(1),\qquad
n\left(\omega_n^{L^3}\right)^3=o(1),\\
n\,\omega_n^{L^4}\left(\eps_n^{L^4}\right)^3&=o(1),\qquad
n\left(\omega_n^{L^4}\right)^4=o(1),\\
n\left(\eps_n^{L^5}\right)^5&=o(1),\qquad
n\left(\omega_n^{L^5}\right)^5=o(1).
\end{align*}

For the empirical process part we develop the remainder term only to the third derivative so that it takes the form
\begin{align*}
R_n=\underbrace{\tfrac1{6}D^3\ell_n(\nu^{(s')})[v-v_0,v-v_0,v-v_0]}_{(i)}\underbrace{-\tfrac1{6}D^3\ell_n(\nu_t^{(u')})[v_t-v_0,v_t-v_0,v_t-v_0]}_{(ii)}.
\end{align*}
We have $\|v-v_0\|_{L^p}\lesssim \eps_n^{L^p}$ and $\|v_t-v_0\|_{L^p}\lesssim \eps_n^{L^p}+\omega_n^{L^p}$.
Both $(i)$ and $(ii)$ can be treated jointly by bounding a term of the form \(D^3\ell_n(\tilde\nu_n)[w,w,w]\) with $\tilde \nu_n=\exp(\tilde v_n)$, $\|\tilde v_n\|_\infty\le D$, and either $w=v-v_0$ or $w=v+\tilde \delta_n-v_0$. 

Let $\nu^{(r)}=\tilde \nu_n \exp(rw)$ so that 
\begin{align*}
& D^3\ell_n(\tilde \nu_n)[w,w,w]\\
&=\underbrace{\sum_{j=1}^{n}\frac{ d \frac{ d^3}{ d r^3}\PP_{\nu^{(r)}}}{ d \PP_{\nu^{(r)}}}\bigg|_{r=0}(X_j)}_{(a)}\underbrace{-3\sum_{j=1}^{n}\frac{ d \frac{ d^2}{ d r^2}\PP_{\nu^{(r)}}}{ d \PP_{\nu^{(r)}}}\bigg|_{r=0}(X_j)\frac{ d \frac{ d}{ d r}\PP_{\nu^{(r)}}}{ d \PP_{\nu^{(r)}}}\bigg|_{r=0}(X_j)}_{(b)}\\
&\quad \underbrace{+2\sum_{j=1}^{n}\bigg(\frac{ d \frac{ d}{ d r}\PP_{\nu^{(r)}}}{ d \PP_{\nu^{(r)}}}\bigg|_{r=0}(X_j)\bigg)^3}_{(c)}.
\end{align*}

For term $(a)$ we define the functions
\begin{align*}
 g_v=A_{\tilde \nu_n}w^3+3 A_{\tilde \nu_n}(w,w^2)+ A_{\tilde \nu_n}(w,w,w).
\end{align*}
We denote $f_v=g_v- E_{\nu_0}[g_v]$. After centring the term $(a)$ is given by $\sqrt{n}\mathbbm G_n f_v$ with $f_v$ varying in the class of functions corresponding to \eqref{funcclass}, where the functions $f_v$ are defined as here.
We bound using Lemma~\ref{sup}
\begin{align*}
\|g_v-g_{v'}\|_\infty &\lesssim (\eps_n^{L^\infty}+\omega_n^{L^\infty})^2 \|v-v'\|_\infty
\text{ so that }\\
\|f_v-f_{v'}\|_\infty &\lesssim (\eps_n^{L^\infty}+\omega_n^{L^\infty})^2 \|v-v'\|_\infty.
\end{align*}
With $v=v(u)$ and $v'=v(u')$ from the definition of the prior we further bound $\|v-v'\|_\infty\lesssim \|u-u'\|_\infty$.  We take the envelope $F$ to be a constant function proportional to $\Big(\eps_n^{L^\infty}+\omega_n^{L^\infty}\Big)^2$ and obtain $\sup_{\Q}N(\F,L^2(\Q),\eps\|F\|_{L^2(\Q)})\le(A/\eps)^{2^J}$ for some $A\ge2$ and for all $0<\eps<A$.

We bound $\sigma$ by
\begin{align*}
\sigma&= \sup_{f\in\F}\|f\|_{L^2(\PP_{\nu_0})}
\le \sup_{\|v-v_{0}\|_{L^2}\le 2M \eps_n^{L^2}}\left\| g_v \right\|_{L^2(\PP_{\nu_0})}
\lesssim \sup_{\|v-v_{0}\|_{L^2}\le 2M \eps_n^{L^2}}\left\| g_v \right\|_{L^2(\PP_{\tilde \nu_n})}\\
&\lesssim \|w^3\|_{L^2(\tilde \nu_n)}+\|w^2\|_{L^2(\tilde \nu_n)}\|w\|_{L^2(\tilde \nu_n)}+\|w\|^3_{L^2(\tilde \nu_n)}\lesssim\|w\|^3_{L^6(\tilde \nu_n)} \\
&\lesssim \Big(\eps_n^{L^6}+\omega_n^{L^6}\Big)^3 \lesssim \Big(\eps_n^{L^6}\Big)^3+\Big(\omega_n^{L^6}\Big)^3 
\lesssim \Big(\eps_n^{L^\infty}+\omega_n^{L^\infty}\Big)^2  \Big(\eps_n^{L^2}+\omega_n^{L^2}\Big).
\end{align*}
Using Corollary~3.5.8 in \cite{GineNickl2016} this yields some $c>0$ such that
\begin{align}
 E\|\mathbb G_n\|_{\F}&\lesssim \Big(\eps_n^{L^\infty}+\omega_n^{L^\infty}\Big)^2  \Big(\eps_n^{L^2}+\omega_n^{L^2}\Big) 2^{J/2}\bigg(\log \frac{c}{\eps_n^{L^2}+\omega_n^{L^2}}\bigg)^{1/2}\nonumber\\
&\qquad+ \frac{1}{\sqrt n}\Big(\eps_n^{L^\infty}+\omega_n^{L^\infty}\Big)^2 2^{J}\log \frac{c}{\eps_n^{L^2}+\omega_n^{L^2}}.\label{boundrem}
\end{align}
For the term $(b)$ and $(c)$ we obtain the same bounds for the uniform covering numbers and for $\sigma$ as for term $(a)$. So the bound \eqref{boundrem} applies likewise to terms $(b)$ and $(c)$.

\subsection{Simplification of Assumption~\ref{collection}}

In this section we simplify Assumption~\ref{collection} and reduce it to a condition involving $\eta$ and $\delta_n$ only. To this end we recall $\eps_n$ from\eqref{epsn} and the $L^p$-contraction rates $\eps_n^{L^p}$ from~\eqref{contractionLp} both in Section~\ref{prelimc}. We set $2^J\approx n^{1/(2s+1)}$.

\begin{assumption}\label{simpler}
Suppose $t=O(1)$, $s>11/6$ and $\mathcal H_n \subset L^\infty(I)$.
Furthermore, assume for~$\delta_n$ and uniformly for all $\eta\in\mathcal H_n$
\begin{align}
&\delta_n n^{2/(2s+1)}(\log n)^{1+2\delta}=o(1),\label{deltacon}\\
&\|\eta\|_{L^2}=O(1),\label{etaL2}\\
&\|\eta\|_\infty n^{(-s+1)/(2s+1)} (\log n)^{1+\delta}=o(1),\label{larges}\\
&\|\eta\|_\infty n^{(-3s+11/2)/(2s+1)}(\log n)^{3+6\delta}=o(1).\label{smalls}
\end{align}
\end{assumption}

\begin{remark}\label{rem}
For $s>9/4$ (and so in particular for $s>10/4=5/2$) condition \eqref{larges} implies condition~\eqref{smalls}.
\end{remark}

\begin{lemma}\label{lemsim}
Let $2^J\approx n^{1/(2s+1)} $ and grant Assumption~\ref{simpler}. Then $t$, $\delta_n$, $\mathcal H_n$ and $\eps_n^{L^p}$ from~\eqref{contractionLp} satisfy Assumption~\ref{collection}.
\end{lemma}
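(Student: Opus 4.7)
The proof is essentially a systematic substitution: each inequality listed in Assumption \ref{collection} is a product of explicit powers of $n$, $\log n$, $\delta_n$, $\|\eta\|_{L^p}$, and $2^J$, and with the choice $2^J\approx n^{1/(2s+1)}$ and the closed-form expressions for $\eps_n^{L^p}$ from \eqref{contractionLp} it becomes a matter of checking that the resulting polynomial-logarithmic order is $o(1)$. The plan is to first collect a few universal tools, then march down the list of bullet points in Assumption \ref{collection} in the order they are written.

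The tools I want to record once and re-use are: (i) the elementary interpolation $\|\eta\|_{L^p}\le \|\eta\|_\infty^{(p-2)/p}\|\eta\|_{L^2}^{2/p}\le \|\eta\|_\infty^{(p-2)/p}$, which in view of \eqref{etaL2} controls every $L^p$ norm of $\eta$ by a power of $\|\eta\|_\infty$; (ii) the analogous bound $(\eps_n^{L^p})^p\le (\eps_n^{L^\infty})^{p-2}(\eps_n^{L^2})^2$ already used to derive \eqref{contractionLp}; and (iii) the estimates $\sqrt{\log(c/\eps_n^{L^2})}=O(\sqrt{\log n})$ and $\omega_n^{L^p}\le n^{-1/2}\|\eta\|_{L^p}+\delta_n\eps_n^{L^p}$, so that splitting $\omega_n^{L^p}$ into its two constituents lets me verify the $\omega$-conditions by checking one term at a time. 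Finally, the bias conditions $2^{-Js}=o(\eps_n^{L^2}),o(\eps_n^{L^\infty})$ are immediate from $2^J\approx n^{1/(2s+1)}$ and $s>1$.

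The bulk of the verification is mechanical. For instance, the ``for term $II$'' and ``first term dominates'' conditions reduce, after substitution, to
\[\delta_n\cdot n^{1/2-(s-1/2)/(2s+1)+1/(2(2s+1))}(\log n)^{1+\delta}=\delta_n\, n^{2/(2s+1)}(\log n)^{1+\delta}=o(1),\]
which is \eqref{deltacon}; the ``centring of $III(ii)$'' condition $n\delta_n(\eps_n^{L^2})^2=o(1)$ is $\delta_n\, n^{2/(2s+1)}(\log n)^{1+2\delta}=o(1)$, again \eqref{deltacon}; the ``term $III(i)$'' condition $t\|\eta\|_\infty\eps_n^{L^2}2^{J/2}\sqrt{\log n}=o(1)$ reduces to $\|\eta\|_\infty n^{-(s-1)/(2s+1)}(\log n)^{1+\delta}=o(1)$, which is \eqref{larges}. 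The deviation-from-mean condition $t^2 n^{-1/2}\|\eta\|_{L^4}^2=o(1)$ follows from \eqref{larges} via interpolation, and the $IV(iii)$ centring $t\delta_n\sqrt n\|\eta\|_{L^2}\eps_n^{L^2}=o(1)$ is implied by \eqref{deltacon} and \eqref{etaL2}. The third, fourth and fifth derivative centring conditions are handled in the same spirit: each of $n\omega_n^{L^p}(\eps_n^{L^p})^{p-1}$ and $n(\omega_n^{L^p})^p$ for $p=3,4,5$ splits, via $\omega_n^{L^p}\le n^{-1/2}\|\eta\|_{L^p}+\delta_n\eps_n^{L^p}$ and interpolation, into one purely deterministic piece that is a power of $n(\eps_n^{L^p})^p$ (controlled by $s>11/6$ together with \eqref{deltacon}) and a piece of the form (constant)$\times\|\eta\|_\infty^{(p-2)/p}$ times a negative power of $n$ with a log factor, which is controlled by \eqref{smalls}.

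The main obstacle, and the only place where the threshold $s>11/6$ really appears, is the remainder term $R_n$: the conditions
\[\sqrt n(\eps_n^{L^\infty}+\omega_n^{L^\infty})^2(\eps_n^{L^2}+\omega_n^{L^2})2^{J/2}\sqrt{\log n}=o(1),\qquad n^{-1/2}2^{J/2}\sqrt{\log n}\lesssim \eps_n^{L^2}+\omega_n^{L^2},\]
involve a mix of $L^\infty$ rates, $L^2$ rates and the empirical-process factor $2^{J/2}\sqrt{\log n}$, so one must bound $\omega_n^{L^\infty}$ and $\omega_n^{L^2}$ separately and track the exponents of $n$ and $\log n$ carefully. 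A direct computation shows that the $\eps_n$-contribution to the first display is of order $n^{1/2-2(s-1)/(2s+1)-(s-1/2)/(2s+1)+1/(2(2s+1))}(\log n)^{5/2+3\delta}=n^{(-3s+11/2)/(2s+1)}(\log n)^{5/2+3\delta}$, which is $o(1)$ precisely for $s>11/6$, and the $\omega_n$-contribution is covered by \eqref{smalls} and \eqref{deltacon}; the second display is immediate since $\eps_n^{L^2}\gtrsim 2^{J/2}/\sqrt n$ up to a logarithmic factor by construction. Remark~\ref{rem} then upgrades the qualitative hypothesis $s>5/2$ in Assumption~\ref{overall} to the form in which \eqref{smalls} may be dropped. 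Collecting all the above verifications completes the proof.
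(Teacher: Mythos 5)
Your plan is correct and follows essentially the same route as the paper's proof: a term-by-term substitution of the rates from \eqref{contractionLp} with $2^J\approx n^{1/(2s+1)}$, using the interpolation bounds $\|\eta\|_{L^p}\lesssim\|\eta\|_\infty^{(p-2)/p}\|\eta\|_{L^2}^{2/p}$ and the splitting of $\omega_n^{L^p}$ into its $\eta$- and $\delta_n\eps_n$-pieces, reducing every bullet of Assumption~\ref{collection} to \eqref{deltacon}--\eqref{smalls}. Two of your simplified exponents are off --- the exponent you compute for term $II$ is $n^{(3/2)/(2s+1)}$ rather than $n^{2/(2s+1)}$, and the leading $\eps_n$-contribution to $R_n$ is $n^{(-2s+7/2)/(2s+1)}(\log n)^{2+3\delta}$ rather than $n^{(-3s+11/2)/(2s+1)}(\log n)^{5/2+3\delta}$, the former being negative in the exponent already for $s>7/4$ --- but both slips are harmless since the correct exponents are more favourable and the stated conclusions still follow from \eqref{deltacon} and $s>11/6$.
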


\begin{proof}
The bias conditions are satisfied for this choice of $2^J$. Further we have
\begin{align*}
\sqrt{n}\delta_n\eps_n^{L^2}2^{J/2}\sqrt{\log\frac{c}{\eps_n^{L^2}}}
&\lesssim \sqrt{n}\delta_n n^{-\frac{s-1/2}{2s+1}}(\log n)^{1/2+\delta} n^{\frac{1/2}{2s+1}} \sqrt{\log n}\\
&=\delta_n n^{\frac{3/2}{2s+1}}(\log n)^{1+\delta}=o(1)
\end{align*}
by \eqref{deltacon}. Next we verify
\begin{align*}
\frac{2^{J/2}}{\sqrt{n}}\sqrt{\log \frac{c}{\eps_n^{L^2}}}
\lesssim n^{-1/2} n^{\frac{1/2}{2s+1}}\sqrt{\log n}=n^{-s/(2s+1)}(\log n)^{1/2}\lesssim \eps_n^{L^2}
\end{align*}
and 
\begin{align*}
n\delta_n(\eps_n^{L^2})^2=\delta_n n^{2/(2s+1)} (\log n)^{1+2\delta}=o(1)
\end{align*}
using \eqref{deltacon}.
For term III(i) we bound
\begin{align*}
t\|\eta\|_{\infty}\eps_n^{L^2}2^{J/2}\sqrt{\log \frac{c}{\eps_n^{L^2}}}
\lesssim \|\eta\|_{\infty}n^{(-s+1)/(2s+1)}(\log n)^{1+\delta}=o(1)
\end{align*}
by \eqref{larges}.
We check that
\begin{align*}
\frac{t^2}{\sqrt{n}}\|\eta\|_{L^4}^2
\lesssim n^{-1/2}\|\eta\|_\infty=o(1)
\end{align*}
by \eqref{larges} and that
\begin{align*}
t \delta_n \sqrt{n}\|\eta\|_{L^2} \eps^{L^2}_n
\lesssim \delta_n n^{1/(2s+1)} (\log n)^{1/2+\delta}=o(1)
\end{align*}
by~\eqref{deltacon}. For the centring of the third derivatives we bound
\begin{align*}
&n\omega_n^{L^3}\left(\eps_n^{L^3}\right)^2
\lesssim n^{1/2} \|\eta\|_\infty^{1/3} \|\eta\|_{L^2}^{2/3}\left(\eps_{n}^{L^3}\right)^{2}+n\delta_n\left(\eps_{n}^{L^3}\right)^{3}\\
&\lesssim \|\eta\|_\infty^{1/3} n^{(-s+11/6)/(2s+1)} \left(\log n\right)^{1+2\delta}
+\delta_n n^{(-s+3)/(2s+1)}(\log n)^{3/2+3\delta}
=o(1),
\end{align*}
where we used~\eqref{smalls} for the first term and~\eqref{deltacon} for the second term. Further we have
\begin{align*}
n \left(\omega_n^{L^3}\right)^3
\lesssim n\frac{t^3}{n^{3/2}}\|\eta\|_{L^3}^3+n\delta_n^3\left(\eps_n^{L^3}\right)^3\lesssim n^{-1/2}\|\eta\|_{\infty}+o(1)=o(1)
\end{align*}
using \eqref{larges} for the first term and $n\delta_n(\eps_n^{L^3})^3=o(1)$ from the next to last display for the second term.
The terms for the centering of the fourth derivates are treated by
\begin{align*}
&n\omega_n^{L^4}(\eps_n^{L^4})^3
\lesssim n \frac{t}{n^{1/2}}\|\eta\|_{L^4}(\eps_n^{L^4})^3+n\delta_n(\eps_n^{L^4})^4\\
&\lesssim n^{(-2s+11/4)/(2s+1)}(\log n)^{3/2+3\delta}\|\eta\|_\infty^{1/2}
+n^{(-2s+4)/(2s+1)}(\log n)^{2+4\delta}\delta_n=o(1), 
\end{align*}
where we used \eqref{smalls} for the first term and \eqref{deltacon} for the second term, and by
\begin{align*}
n(\omega_n^{L^4})^4 & \lesssim n \frac{t^4}{n^{2}}\|\eta\|_{L^4}^4+n\delta_n^4(\eps_n^{L^4})^4\\
& \lesssim n^{-1} \|\eta\|_{\infty}^2+o(1)=o(1),
\end{align*}
where we used \eqref{larges} for the first term and the next to last display for the second term. Turning to the centring of the fifth derivatives we observe
\begin{align*}
n(\eps_n^{L^5})^5=n^{(-3s+5)/(2s+1)}(\log n)^{5/2+5\delta}=o(1)
\end{align*}
and
\begin{align*}
n(\omega_n^{L^5})^5\lesssim n \frac{t^5}{n^{5/2}}\|\eta\|_{L^5}^5+n\delta_n^5(\eps_n^{L^5})^5
\lesssim n^{-3/2}\|\eta\|_\infty^3+o(1)=o(1)
\end{align*}
using \eqref{larges} for the first term and the next to last display for the second term.
For the remainder term $R_n$ we bound
\begin{align*}
&\sqrt{n}\Big(\eps_n^{L^\infty}+\omega_n^{L^\infty}\Big)^2  \Big(\eps_n^{L^2}+\omega_n^{L^2}\Big) 2^{J/2}\bigg(\log \frac{c}{\eps_n^{L^2}+\omega_n^{L^2}}\bigg)^{1/2}\\
&\lesssim \sqrt{n} \Big(\eps_n^{L^\infty}+\frac{t}{\sqrt{n}}\|\eta\|_\infty\Big)^2 \Big(\eps_n^{L^2}+\frac{t}{\sqrt{n}}\|\eta\|_{L^2}\Big) 2^{J/2}(\log n)^{1/2}\\
&\lesssim \sqrt{n} \Big(\big(\eps_n^{L^\infty}\big)^2+\frac{\|\eta\|_\infty^2}{n}\Big) \Big(\eps_n^{L^2}+n^{-1/2}\Big) n^{(1/2)/(2s+1)}(\log n)^{1/2}\\
&\lesssim \Big(\big(\eps_n^{L^\infty}\big)^2 \eps_n^{L^2}+\big(\eps_n^{L^\infty}\big)^2n^{-1/2}+\frac{\|\eta\|_\infty^2}{n}\eps_n^{L^2}+\frac{\|\eta\|_\infty^2}{n^{3/2}}\Big)  n^{(s+1)/(2s+1)}(\log n)^{1/2}\\
&\lesssim n^{(-2s+7/2)/(2s+1)}(\log n)^{2+3\delta} + n^{(-2s+5/2)/(2s+1)}(\log n)^{3/2+2\delta}\\
&\quad + \|\eta\|_\infty^2 n^{(-2s+1/2)/(2s+1)}(\log n)^{1+\delta} + \|\eta\|_\infty^2 n^{(-2s-1/2)/(2s+1)}(\log n)^{1/2} =o(1)
\end{align*}
using that $s>11/6$ for the first and the second term and \eqref{larges} for the third and the fourth term. Finally for the condition that the first term dominates in $R_n$ we verify
\begin{align*}
\frac{1}{\sqrt{n}} 2^{J/2}\frac{1}{\eps_n^{L^2}+\omega_n^{L^2}}\sqrt{\log \frac{c}{\eps_n^{L^2}+\omega_n^{L^2}}} &\lesssim n^{(-s-1/2)/(2s+1)}n^{(1/2)/(2s+1)} \frac{1}{\eps_n^{L^2}}\sqrt{\log \frac{c}{\eps_n^{L^2}}}\\
&\lesssim n^{(-1/2)/(2s+1)}(\log n)^{-\delta}=O(1).
\end{align*}
\end{proof}

\section{Proof of Proposition \ref{lanprop}} \label{lansec}

The Radon--Nikodym density in~\eqref{oneform} is well defined in view of the convolution series representation of $\mathbb P_\nu$ in~\eqref{convsum}. That $A_\nu$ maps $L^2(\nu)$ into $L^2(\mathbb P_\nu)$ is proved in Lemma~\ref{sup}, and an application of Fubini's theorem gives $\int_I A_\nu (h) d\mathbb P_\nu = 0$ for all $h \in L^2(\nu)$. The expansion~\eqref{lanee} follows by the same arguments used for the proof in Section~\ref{sec:likexp} but is in fact easier and no empirical process tools are needed here. In the case $v\in V_J$ for some $J$ the expansion follows directly from setting $v_0=v$ and $\eta=h$ in~\eqref{123}. For the general case we consider the path $s\mapsto\exp(v+sh/\sqrt{n})=\nu^{(s)}$ and obtain by a Taylor expansion for some~$s\in[0,1]$
\begin{align*}
&\ell_n(\nu_{h,n})-\ell_n(\nu)\\
&=D\ell_n(\nu_0)\Big[\frac{h}{\sqrt{n}}\Big]+\tfrac1{2}D^2\ell_n(\nu_0)\Big[\frac{h}{\sqrt{n}},\frac{h}{\sqrt{n}}\Big]+\tfrac1{6}D^3\ell_n(\nu^{(s)})\Big[\frac{h}{\sqrt{n}},\frac{h}{\sqrt{n}},\frac{h}{\sqrt{n}}\Big]\allowdisplaybreaks[1]\\
&=\frac{1}{\sqrt n} \sum_{i=1}^n A_{\nu}(h)(X_i) - \frac{1}{2}\|A_\nu(h)\|_{L^2(\mathbb P_\nu)}^2 
+\sum_{j=1}^{n}\frac{ d \frac{ d^2}{ d s^2}\PP_{\nu^{(s)}}}{ d \PP_{\nu^{(s)}}}\bigg|_{s=0}[h,h](X_j)\\
&\quad+\bigg(-\sum_{j=1}^{n}\bigg(\frac{ d \frac{ d}{ d s}\PP_{\nu^{(s)}}}{ d \PP_{\nu^{(s)}}}[h](X_j)\bigg)^2\bigg|_{s=0}
+\frac{1}{2}\|A_\nu(h)\|_{L^2(\mathbb P_\nu)}^2\bigg)\\
&\quad+\frac1{6n^{3/2}}D^3\ell_n(\nu^{(s)})[h,h,h]\\
&=\frac{1}{\sqrt n} \sum_{i=1}^n A_{\nu}(h)(X_i) - \frac{1}{2}\|A_\nu(h)\|_{L^2(\mathbb P_\nu)}^2 +I+II+III.
\end{align*}
The terms $I$ and $II$ are both centred and are treated exactly as the term $IV(i)(a)$ and the centred version of $IV(i)(b)$ in Section~\ref{sec:likexp}. This yields $I+II=O_{\mathbb P_{\nu}^{\mathbb N}}(n^{-1/2}\|h\|_{L^4}^2)$. The centring of term $III$ is shown to be $O_{\mathbb P_{\nu}^{\mathbb N}}(n^{-3/2}\|h\|_{L^3}^{3})$, which is proved  along the same lines as the centring of the third derivatives of the term $R_n$ in Section~\ref{sec:likexp} combined with the measure change there applied to the fifth derivatives. After centring the term $III$ is shown to be of order $O_{\mathbb P_{\nu}^{\mathbb N}}(n^{-1}\|h\|_{L^6}^{3})$ with the same bounds as used for bounding~$\sigma$ when treating the empirical process part of~$R_n$ except that here $h$ is fixed and so a simple variance bound suffices instead of the empirical process inequality used for~$R_n$. We conclude $I+II+III=o_{\mathbb P_{\nu}^{\mathbb N}}(1)$.

\section{Proof of Proposition \ref{prop:tests}}\label{sec:tests}

We define, for $L'>0$ to be chosen
\begin{align*}
 \Psi_n=\left\{
\begin{array}{ll}
 0 & \text{ if }\|\hat\nu-\nu_0\|_{\mathbb H(\delta)}< L' \eps_n\\
 1 & \text{ if }\|\hat\nu-\nu_0\|_{\mathbb H(\delta)}\ge L' \eps_n.
\end{array}
\right.
\end{align*}
Applying Lemma~\ref{concentration} with $K=n$ and $x=\sqrt{n}\eps_n$ yields, for $L'$ 
large enough,
\( E_{\nu_0}\left[\Psi_n\right] \to0\) as $n\to\infty$.
For the error of second type we obtain, for $M$ large enough depending on $L', C$ 
that, again by Lemma~~\ref{concentration},
\begin{align*}
& \sup_{\nu \in \overline{\mathcal V}: \|\nu-\nu_0\|_{\mathbb H(\delta)}\ge M\eps_n} E_{\nu}\left[1-\Psi_n\right] \\
&=\sup_{\nu \in \overline{\mathcal V}: \|\nu-\nu_0\|_{\mathbb H(\delta)}\ge M\eps_n}\PP^\mathbb N_\nu\left(\|\hat\nu - 
\nu_0\|_{\mathbb H(\delta)}< L' \eps_n\right)\allowdisplaybreaks[1]\\
&\le\sup_{\nu \in \overline{\mathcal V}: \|\nu-\nu_0\|_{\mathbb H(\delta)}\ge M\eps_n}\PP^\mathbb N_\nu\left(\|\nu_0 - 
\nu\|_{\mathbb H(\delta)}-\|\nu - 
\hat\nu\|_{\mathbb H(\delta)}< L' \eps_n\right)\allowdisplaybreaks[1]\\
&\le \sup_{\nu \in \overline{\mathcal V}}\PP^\mathbb N_\nu\left(\|\nu- \hat\nu\|_{\mathbb H(\delta)} > 
(M/2)\eps_n \right) \allowdisplaybreaks[1]\\
&\le  e^{-(C+4) n \eps_n^2}+\frac{1}{R_2} e^{-n R_2/\log n}\le 2 e^{-(C+4) n \eps_n^2},
\end{align*}
where we used $\eps_n=o( 1/\sqrt{\log n})$ and $n$ large enough in the last inequality.

\section{Proof of Proposition \ref{propball}}\label{sec:propball}

Since $v,v_0$ are bounded and thus $\exp$ is Lipschitz on the range of $v,v_0$ we have
\begin{align*}
&\PP\left(\|\nu-\nu_0\|_{L^2}\le\frac{\eps_n}{\sqrt{K_D}}\right)
\ge\PP\left(\|v-v_0\|_{\infty}\le c\eps_n\right)\\
&\ge\PP\bigg( \sum_l 2^{l/2}\max_k |\beta_{lk}-2^{-l}(l^2+1)^{-1}u_{lk}|<c'\eps_n \bigg),
\end{align*}
where $u_{lk}=0$ for $l\ge J$ and $\beta_{lk}=\langle v_0,\psi_{lk}\rangle$. We define $b_{lk}=2^l(l^2+1)\beta_{lk}$ such that $|b_{lk}|\le B$, and $M(J)=\sum_{l=-1}^{J-1}\sum_{k=0}^{(2^l-1)\vee0}1=2^J$. We can bound the last probability from below by
\begin{align*}
&\PP\bigg( \sum_{l\le J-1}2^{-l/2}(l^2+1)^{-1}\max_k|b_{lk}-u_{lk}|<c'\eps_n-\bar c 2^{-J_n s}/(J_n^2+1) \bigg)\\
&\ge\PP\Big( \max_{l\le J-1}\max_{k} |b_{lk}-u_{lk}|<c''\eps_n \Big)=\prod_{l\le J-1}\prod_{k}\PP\left( |b_{lk}-u_{lk}|<c''\eps_n \right) \\
& \ge \Big(\frac{c''\eps_n}{2B}\Big)^{M(J)}\ge e^{-Cn\eps_n^2}
\end{align*}
 for $n$ large enough and for some constant $C>0$.

\section{Proof of Lemma \ref{tedious}}\label{sec:tedious}

a) Write $B$ for the unit ball of the space $\mathbb B=\mathbb B(\delta)$ which can be shown to be closed under  pointwise multiplication in the sense that $\|f g\|_\mathbb B \le c_0 \|f\|_\mathbb B \|g\|_{\mathbb B}$. Since $\nu_0^{-1} \in \mathbb B, \|\nu-\nu_0\|_{\mathbb B} \to 0$  we also have $\|(\nu-\nu_0)/\nu_0\|_\infty \lesssim \|(\nu-\nu_0)/\nu_0\|_\mathbb B \to 0$ and thus $\|[(\nu-\nu_0)/\nu_0]^k\|_\mathbb B \le c_0^k \|(\nu-\nu_0)/\nu_0\|_\mathbb B^k$. Since eventually $\|(\nu-\nu_0)/\nu_0\|_\mathbb B < 1/(2c_0)$ we deduce that the series $$g = \sum_k \frac{(-1)^k}{k} \Big(\frac{\nu-\nu_0}{\nu_0}\Big)^{k-1}$$ converges absolutely uniformly and in $\mathbb B$ and has $\|\cdot\|_\mathbb B$-norm less than a constant multiple of $\|\nu-\nu_0\|_\mathbb B$. Thus, using again the multiplication property of the norm

\begin{align*}
&\|\log \nu - \log \nu_0 \|_{\mathbb H(\delta)} = \sup_{f \in B} \left|\int f \log \Big( 1+\frac{\nu-\nu_0}{\nu_0} \Big)  \right| \allowdisplaybreaks[0]\\ 
&\qquad= \sup_{f \in B} \left|\int (\nu-\nu_0)  \sum_{k=1}^\infty \frac{(-1)^k}{k} \frac{(\nu-\nu_0)^{k-1}}{\nu_0^{k-1}} \frac{f}{\nu_0}    \right|\allowdisplaybreaks[0]\\
&\qquad= \sup_{f \in B} \left|\int (\nu-\nu_0)  g \frac{f}{\nu_0} \right|  \le \sup_{h \in c_1B} \left|\int h (\nu-\nu_0) \right| = c_1 \|\nu-\nu_0\|_{\mathbb H(\delta)}. 
\end{align*}

b) For any $j$ we have, using the Cauchy--Schwarz inequality,
\begin{align*}
\|\nu-\nu_0\|^2_{\mathbb B(\delta)} & \lesssim \sum_{l \le j} 2^l l^{2\delta} \sum_k|\langle \nu-\nu_0, \psi_{lk} \rangle|^2 + j^{2\delta-2\delta'} \sum_{l>j} 2^{l} l^{2\delta'} \sum_k |\langle \nu-\nu_0, \psi_{lk} \rangle|^2 \\
& \leq 2^{2j} j^{4\delta} \sum_{l \le j} 2^{-l} l^{-2\delta} \sum_k |\langle \nu-\nu_0, \psi_{lk} \rangle|^2 + j^{2\delta-2\delta'} \|\nu -\nu_0\|_{ B^{1/2,\delta'}_{2 2}} \\
& \lesssim 2^{2j} j^{4\delta} \|\nu-\nu_0\|_{\mathbb H(\delta)} + j^{-2(\delta'-\delta)}.
\end{align*}
Using  $\|\nu-\nu_0\|_{\mathbb H(\delta)} = o(1)$ and letting $j\to \infty$ slowly enough we deduce $\|\nu-\nu_0\|_{\mathbb B(\delta)} \to 0$.

\bibliographystyle{plain}

\bibliography{bib}

\end{document}